\newtheorem{theorem}{Theorem}[section]
\newtheorem{lemma}[theorem]{Lemma}
\newtheorem{question}[theorem]{Question}
\newcommand\DELETE[1]{}
\newcommand\ESOK[1]{#1}
\begin{document}

\title{{\bf Outerplanar and planar oriented cliques}}
\author{
{\sc Ayan Nandy}$\,^a$, {\sc Sagnik Sen}$\,^{a,b,c}$, {\sc \'Eric Sopena}$^{b,c}$\\
\mbox{}\\
{\small $(a)$ Indian Statistical Institute, Kolkata, India}\\
{\small $(b)$ Univ. Bordeaux, LaBRI, UMR 5800, F-33400 Talence, France}\\
{\small $(c)$ CNRS, LaBRI, UMR 5800, F-33400 Talence, France}
}

\date{\today}

\maketitle

\begin{abstract}
\DELETE{An oriented graph is a directed graph without cycles of length at most 2. 
An \textit{oriented $k$-coloring} of an oriented graph $ \overrightarrow{G}$ is a mapping $\phi$ from the vertex set $V(\overrightarrow{G})$ to the set $\{1,2,....,k\}$  such that, (i) $\phi(u) \neq \phi(v)$ whenever $u$ and $v$ are adjacent and (ii) if $ \overrightarrow{uv}$ and $ \overrightarrow{wx}$ are 
two arcs in $\overrightarrow{G}$, then $\phi(u) = \phi(x)$ implies $\phi(v) \neq \phi(w)$. 
 The \textit{oriented chromatic number} $\chi_o(\overrightarrow{G})$ of an oriented graph $\overrightarrow{G}$ is the smallest integer $k$ for which 
$ \overrightarrow{G}$ has an oriented $k$-coloring.
An \textit{oclique} 
 is an oriented graph $ \overrightarrow{G}$ such that $\chi_o(\overrightarrow{G}) = |V(\overrightarrow{G})|$. 
 It is NP-hard to decide if a graph can be oriented as an oclique or not. 
}

\ESOK{The clique number of an undirected graph $G$ is the maximum order of a complete subgraph of $G$ and is
a well-known lower bound for the chromatic number of $G$. Every proper $k$-coloring of $G$ may be viewed as
a homomorphism (an edge-preserving vertex mapping) of $G$ to the complete graph of order $k$. By considering
homomorphisms of oriented graphs (digraphs without cycles of length at most 2), we get a natural notion of (oriented) colorings
and oriented chromatic number of oriented graphs. 
An oriented clique is then an oriented graph whose number of vertices and oriented chromatic number coincide.
However, the structure of oriented cliques is much less understood than in the undirected case.}

 \ESOK{In this paper, we study the structure of outerplanar and planar oriented cliques.
 We first provide} a list of 11 graphs and prove that an outerplanar graph can be oriented as an oriented clique if and only if 
 it contains one of \ESOK{these graphs} 
 as a spanning subgraph. 
Klostermeyer and MacGillivray conjectured that \ESOK{the order of a} planar oriented clique \ESOK{is} at most 15, which was later proved by Sen.
We show that 
any planar oriented clique on 15 vertices must contain a particular oriented graph as a spanning \ESOK{subgraph, thus reproving the} above conjecture.
We also provide tight upper bounds 
for the order of planar oriented cliques of girth $k$ for all $k \ge 4$.
 
\end{abstract}


\section{Introduction \ESOK{and statement of results}}

An {\textit{oriented graph}} 
 is a \ESOK{digraph} with no cycle of length 1 or 2. By replacing each edge of a simple graph $G$ with an arc (ordered pair of vertices)
  we obtain an oriented graph $ \overrightarrow{G}$;  \ESOK{we say that} $ \overrightarrow{G}$ is  an \textit{orientation} of $G$ and \ESOK{that}
  $G$ is the \textit{underlying graph} of $ \overrightarrow{G}$.
  We denote by $V( \overrightarrow{G})$  and $A( \overrightarrow{G}$)  the set of vertices  and arcs  of $ \overrightarrow{G}$, \ESOK{respectively}.  
   An arc $(u,v)$ (where $u$ and $v$ are vertices) is denoted by $ \overrightarrow{uv}$. Two arcs $\overrightarrow{uv}$ and $\overrightarrow{vw}$ 
  of an oriented graph are together called a \textit{directed 2-path}, or a \textit{2-dipath}, where $u$ and $w$ are \textit{terminal} vertices and $v$ is an 
  \textit{internal} vertex.

Colorings of oriented graphs first appeared in the work of Courcelle~\cite{courcelle-monadic} on the monadic second order logic of graphs. 
Since then it has been considered by many researchers, following the work of Raspaud and Sopena~\cite{planar80} on oriented colorings
of planar graphs.

An \textit{oriented $k$-coloring}~\cite{ocoloring} of an oriented graph $ \overrightarrow{G}$ is a mapping $\phi$ from 
$V(\overrightarrow{G})$ to the set $\{1,2,....,k\}$  such that:

\begin{itemize}
\item[$(i)$]  $\phi(u) \neq \phi(v)$ whenever $u$ and $v$ are adjacent, and 

\item[$(ii)$]  if $ \overrightarrow{uv}$ and $ \overrightarrow{wx}$ are 
two arcs in $\overrightarrow{G}$, then $\phi(u) = \phi(x)$ implies $\phi(v) \neq \phi(w)$. 
\end{itemize}

\ESOK{We  say} that an oriented graph $\overrightarrow{G}$ is 
\textit{$k$-colorable} whenever it admits an oriented $k$-coloring. 
 The \textit{oriented chromatic number} $\chi_o(\overrightarrow{G})$ of an oriented graph $\overrightarrow{G}$ is the smallest integer $k$ such that 
$ \overrightarrow{G}$ is $k$-colorable.

Alternatively, one can define \ESOK{the} oriented chromatic number by \ESOK{means of} homomorphisms of oriented graphs. 
Let $ \overrightarrow{G}$ and $ \overrightarrow{H}$ be two oriented graphs. A homomorphism of $ \overrightarrow{G}$ to $ \overrightarrow{H}$ is a
 mapping $\phi: V(\overrightarrow{G}) \rightarrow V(\overrightarrow{H})$ which preserves the arcs, that is, $uv \in A(\overrightarrow{G})$ implies $\phi(u)\phi(v) \in A(\overrightarrow{H})$. 
 The \textit{oriented chromatic number} $\chi_o( \overrightarrow{G})$ of an oriented graph $\overrightarrow{G}$ is 
 then the minimum order (number of vertices) of an oriented graph $\overrightarrow{H}$ such that
$\overrightarrow{G}$
admits a homomorphism to $\overrightarrow{H}$.

Notice that the terminal vertices of a  
\ESOK{2-dipath} must receive  distinct colors in every oriented coloring because of the second condition of the definition. 
In fact, for providing an oriented coloring of an oriented graph, only the pairs of vertices which are either adjacent or connected by a 2-dipath must receive distinct colors 
(that is, \ESOK{for every two non-adjacent vertices $u$ and $v$ which are not linked by a 2-dipath, there exists an oriented coloring which assigns the same color to $u$ and $v$}).
Motivated by this observation, the following definition
was proposed.

An \textit{\ESOK{absolute oriented} clique}, or simply an \textit{oclique} --- a term coined by Klostermeyer and MacGillivray in~\cite{36} ---, is an oriented graph  $ \overrightarrow{G}$ for which $\chi_o(\overrightarrow{G}) = |V(\overrightarrow{G})|$.    
  Note that ocliques can hence be characterized as those oriented graphs whose any two distinct vertices are 
  at (weak) directed distance at most 2 from each other,
  that is, either adjacent or connected by a 2-dipath in either direction. 
 Note that an oriented graph with an oclique of order  $n$ as a subgraph has oriented chromatic number  at least $n$. 
 The \textit{\ESOK{absolute oriented} clique number} $\omega_{ao}(\overrightarrow{G})$ of an oriented graph $ \overrightarrow{G}$ is the maximum order of an oclique  contained in 
 $ \overrightarrow{G}$ as a subgraph.

The oriented chromatic number $\chi_{o}(G)$ (resp. \ESOK{absolute oriented} clique number $\omega_{ao}(G)$)  of  a simple graph 
$ G$  is the maximum of the oriented chromatic numbers (resp. \ESOK{absolute oriented} clique numbers) of all the oriented graphs with underlying graph  $G$. 
The oriented chromatic number $\chi_{o}(\mathcal{F})$ (resp. \ESOK{absolute oriented} clique number $\omega_{ao}(\mathcal{F})$) of  a family 
$ \mathcal{F}$ of graphs is the maximum of the oriented chromatic numbers (resp. \ESOK{absolute oriented} clique numbers) of the graphs from the family $\mathcal{F}$.

 From the definitions, clearly we have the following:

\begin{lemma}\label{lem basic-in}
For any oriented graph $\overrightarrow{G}$, 
$ \omega_{ao}(\overrightarrow{G})  \leq \chi_o(\overrightarrow{G})$.
\end{lemma}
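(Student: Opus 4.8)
The plan is to exploit the two facts baked into the definitions: first, that the absolute oriented clique number is witnessed by an actual sub\-graph that is itself an oclique, and second, that the oriented chromatic number is monotone under taking subgraphs. So I would begin by letting $\overrightarrow{H}$ be an oclique contained in $\overrightarrow{G}$ as a subgraph with $|V(\overrightarrow{H})| = \omega_{ao}(\overrightarrow{G})$, which exists by definition of $\omega_{ao}$. Since $\overrightarrow{H}$ is an oclique, we have $\chi_o(\overrightarrow{H}) = |V(\overrightarrow{H})| = \omega_{ao}(\overrightarrow{G})$ directly from the definition of an oclique.

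Next I would establish the monotonicity statement: if $\overrightarrow{H}$ is a subgraph of $\overrightarrow{G}$, then $\chi_o(\overrightarrow{H}) \le \chi_o(\overrightarrow{G})$. The cleanest route is through the homomorphism formulation given in the excerpt. Let $\overrightarrow{T}$ be an oriented graph of order $\chi_o(\overrightarrow{G})$ to which $\overrightarrow{G}$ admits a homomorphism $\phi$. Restricting $\phi$ to $V(\overrightarrow{H})$ yields a mapping that still preserves arcs, because every arc of $\overrightarrow{H}$ is an arc of $\overrightarrow{G}$; hence $\overrightarrow{H}$ admits a homomorphism to $\overrightarrow{T}$ and therefore $\chi_o(\overrightarrow{H}) \le |V(\overrightarrow{T})| = \chi_o(\overrightarrow{G})$. (Equivalently, one could argue with the coloring definition: the restriction to $V(\overrightarrow{H})$ of any oriented $k$-coloring of $\overrightarrow{G}$ is an oriented $k$-coloring of $\overrightarrow{H}$, since conditions $(i)$ and $(ii)$ only become easier to satisfy on a subgraph.)

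Chaining the two observations gives $\omega_{ao}(\overrightarrow{G}) = \chi_o(\overrightarrow{H}) \le \chi_o(\overrightarrow{G})$, which is the claim. There is essentially no obstacle here; the only point requiring a line of care is the arc-preservation check for the restricted homomorphism (or the verification that conditions $(i)$–$(ii)$ are inherited by subgraphs), and even that is immediate. I would keep the write-up to a few sentences.
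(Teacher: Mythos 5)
Your proof is correct and follows exactly the reasoning the paper has in mind: the paper states this lemma as immediate from the definitions (noting earlier that an oriented graph containing an oclique of order $n$ as a subgraph has oriented chromatic number at least $n$), and your argument via the maximum-order oclique subgraph together with monotonicity of $\chi_o$ under subgraphs simply spells that out. Nothing further is needed.
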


One of the first major results proved regarding \ESOK{the} oriented chromatic number of planar graphs is the following by Raspaud and Sopena~\cite{planar80}.

\begin{theorem}[Raspaud and Sopena, 1994]\label{orientedacyclic}
Every planar graph has oriented chromatic number at most $80$. 
\end{theorem}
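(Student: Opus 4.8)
The plan is to obtain the bound by combining two ingredients, following the original approach of Raspaud and Sopena~\cite{planar80}: the deep structural fact that every planar graph is acyclically $5$-colorable, and a general lemma bounding the oriented chromatic number of a graph in terms of its acyclic chromatic number. Recall that a proper vertex coloring of an undirected graph $G$ is \emph{acyclic} if the union of any two of its color classes induces a forest, and that $a(G)$ denotes the least number of colors in such a coloring. Borodin's celebrated theorem states that $a(G)\le 5$ for every planar $G$, so it is enough to establish the following: \emph{if $a(G)\le k$, then $\chi_o(G)\le k\cdot 2^{k-1}$}; taking $k=5$ then yields $\chi_o(G)\le 5\cdot 2^{4}=80$.

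To prove this general statement I would fix an arbitrary orientation $\overrightarrow{G}$ of $G$ together with an acyclic coloring $c\colon V(G)\to\{1,\dots,k\}$, and build an oriented $(k\cdot 2^{k-1})$-coloring $\phi$ of $\overrightarrow{G}$. For each $2$-element set of colors $\{i,j\}$, let $G_{ij}$ be the subgraph induced by the vertices colored $i$ or $j$; by definition of an acyclic coloring $G_{ij}$ is a forest, and since $c$ is proper, every edge of $G_{ij}$ joins a vertex colored $i$ to a vertex colored $j$. I would root each tree of $G_{ij}$ and define a labeling $\beta_{ij}\colon V(G_{ij})\to\{0,1\}$ greedily from the roots towards the leaves: each root is labeled $0$, and when a vertex $v$ has an already-labeled parent $u$, I would choose the unique value $\beta_{ij}(v)\in\{0,1\}$ making the edge $uv$ satisfy the \emph{direction-encoding invariant}: the arc between $u$ and $v$ runs from its color-$i$ endpoint to its color-$j$ endpoint if and only if $u$ and $v$ carry the same $\beta_{ij}$-label. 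Exactly one choice of $\beta_{ij}(v)$ achieves this, so $\beta_{ij}$ is well defined and the invariant holds for every edge of $G_{ij}$. (We write $\beta_{ij}=\beta_{ji}$, both denoting this labeling of the forest $G_{ij}=G_{ji}$.) Finally I would set $\phi(v)=\bigl(c(v),t_v\bigr)$, where $t_v$ is the map on $\{1,\dots,k\}\setminus\{c(v)\}$ defined by $t_v(\ell)=\beta_{c(v)\ell}(v)$. Since there are exactly $k\cdot 2^{k-1}$ possible values of $\phi$, it remains only to check that $\phi$ is an oriented coloring.

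Condition $(i)$ is immediate: adjacent vertices receive distinct colors under $c$, hence distinct values under $\phi$. For condition $(ii)$, I would suppose for contradiction that there are arcs $\overrightarrow{uv}$ and $\overrightarrow{wx}$ with $\phi(u)=\phi(x)$ and $\phi(v)=\phi(w)$. As $\phi$ refines $c$ and $c$ is proper, $i:=c(u)=c(x)$ and $j:=c(v)=c(w)$ are distinct, so all four vertices lie in $G_{ij}$, and the equalities on $\phi$-values give $\beta_{ij}(u)=\beta_{ij}(x)$ and $\beta_{ij}(v)=\beta_{ij}(w)$. Now apply the direction-encoding invariant: the arc $\overrightarrow{uv}$ runs from its color-$i$ endpoint $u$ to its color-$j$ endpoint $v$, so $\beta_{ij}(u)=\beta_{ij}(v)$, whereas the arc $\overrightarrow{wx}$ runs from its color-$j$ endpoint $w$ to its color-$i$ endpoint $x$ --- the opposite orientation --- so $\beta_{ij}(w)\ne\beta_{ij}(x)$. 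Chaining the four relations gives $\beta_{ij}(u)=\beta_{ij}(v)=\beta_{ij}(w)\ne\beta_{ij}(x)=\beta_{ij}(u)$, a contradiction. Hence $\phi$ satisfies $(ii)$ as well, so $\chi_o(\overrightarrow{G})\le k\cdot 2^{k-1}$, which proves the general statement and, via Borodin's theorem, the theorem.

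I expect the main delicate point to be precisely this verification of condition $(ii)$: it works only because the orientation of each forest-edge has been encoded into its two endpoint labels through the invariant, and one has to make sure the greedy labeling is genuinely consistent (it is, because each $G_{ij}$ is acyclic). The one genuinely heavy external input is Borodin's acyclic $5$-coloring theorem, which I would use as a black box. The gap between the resulting bound $80$ and the best known lower bounds is substantial, but narrowing it is a separate matter.
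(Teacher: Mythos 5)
Your proposal is correct: the paper itself does not prove this theorem (it only cites Raspaud and Sopena), and your argument is essentially the original one from their paper --- Borodin's acyclic $5$-color theorem combined with the lemma that $a(G)\le k$ implies $\chi_o(G)\le k\cdot 2^{k-1}$, proved by encoding, for each pair of color classes, the arc directions of the induced forest into a single bit per vertex propagated from the roots. Your verification of condition $(ii)$ via the direction-encoding invariant is sound (every edge of each forest $G_{ij}$ is a parent--child edge, so the invariant holds on all of $G_{ij}$), and with $k=5$ this gives the stated bound $5\cdot 2^{4}=80$.
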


In the same paper, they also proved that every oriented forest is 3-colorable. 

\begin{theorem}[Raspaud and Sopena, 1994]\label{orientedforest}
Every forest has oriented chromatic number at most~$3$. 
\end{theorem}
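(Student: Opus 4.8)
The plan is to prove the following slightly more informative statement, from which the theorem follows at once: every oriented forest $\overrightarrow{F}$ admits a homomorphism to the directed $3$-cycle $\overrightarrow{C_3}$, i.e. the oriented graph on vertex set $\{1,2,3\}$ with arc set $\{\overrightarrow{12},\overrightarrow{23},\overrightarrow{31}\}$. Since $\overrightarrow{C_3}$ has no cycle of length $1$ or $2$, it is a legitimate target for the homomorphism characterisation of $\chi_o$, so such a homomorphism gives $\chi_o(\overrightarrow{F})\le |V(\overrightarrow{C_3})|=3$; taking the maximum over all orientations of a forest $F$ then yields $\chi_o(F)\le 3$. Moreover a homomorphism of each connected component of $\overrightarrow{F}$ into $\overrightarrow{C_3}$ assembles into a homomorphism of all of $\overrightarrow{F}$ (there are no arcs between components), so we may assume $\overrightarrow{F}=\overrightarrow{T}$ is an orientation of a tree.

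The only feature of $\overrightarrow{C_3}$ used is that each of its vertices has exactly one out-neighbour and exactly one in-neighbour. I would argue by induction on $|V(\overrightarrow{T})|$, the case of a single vertex being trivial. Otherwise, pick a leaf $\ell$ of the underlying tree and let $v$ be its unique neighbour. Then $\overrightarrow{T'}=\overrightarrow{T}-\ell$ is an orientation of a smaller tree, so by the induction hypothesis there is a homomorphism $\phi:\overrightarrow{T'}\to\overrightarrow{C_3}$. Extend $\phi$ to $\ell$ by setting $\phi(\ell)$ to be the out-neighbour of $\phi(v)$ in $\overrightarrow{C_3}$ if $\overrightarrow{v\ell}\in A(\overrightarrow{T})$, and the in-neighbour of $\phi(v)$ if $\overrightarrow{\ell v}\in A(\overrightarrow{T})$. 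The only arc of $\overrightarrow{T}$ not present in $\overrightarrow{T'}$ is the one incident with $\ell$, and by construction it is mapped to an arc of $\overrightarrow{C_3}$; hence $\phi$ is a homomorphism $\overrightarrow{T}\to\overrightarrow{C_3}$, completing the induction.

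This result is soft and there is no real obstacle: the substance is precisely the observation that a leaf is incident with a single arc and therefore imposes only one local constraint, which every vertex of $\overrightarrow{C_3}$ can satisfy in either direction because its in- and out-degree are $1$. The only points requiring a little care are (a) checking that $\overrightarrow{C_3}$ is an admissible target (no digon), so that the bound of $3$ is genuine, and (b) the reduction from forests to trees. For context one may also record that $3$ is best possible, since in a directed path on three vertices $\overrightarrow{abc}$ (with $\overrightarrow{ab},\overrightarrow{bc}\in A$) the three vertices are pairwise at weak directed distance at most $2$, so this oriented path is an oclique and forces $\chi_o\ge 3$.
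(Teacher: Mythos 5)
Your proof is correct: the leaf-induction showing that every oriented forest admits a homomorphism to the directed $3$-cycle (a legitimate target, having no digon) is exactly the standard argument for this result, and your reduction to tree components and the tightness remark via the $2$-dipath are both fine. Note that the paper itself does not prove this statement --- it is quoted from Raspaud and Sopena~\cite{planar80} --- so there is no internal proof to compare against; your argument matches the classical one from the literature.
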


Later, Sopena~\cite{orientedchi} proved that every oriented outerplanar graph is 7-colorable and provided an example of an outerplanar oclique of order 7 (Figure~\ref{fig outerplanarmax}) to prove the tightness of the result. 

\begin{theorem}[Sopena, 1997]\label{orientedouterplanar}
Every outerplanar graph has oriented chromatic number at most~$7$. 
\end{theorem}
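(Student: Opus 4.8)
The plan is to exhibit a single oriented graph $\overrightarrow{H}$ on $7$ vertices that is \emph{universal} for orientations of outerplanar graphs, i.e., such that every such orientation admits a homomorphism to $\overrightarrow{H}$; since $\overrightarrow{H}$ will be an oriented graph (no digon), such a homomorphism is exactly an oriented $7$-coloring, and the bound follows. The natural candidate is the Paley (quadratic-residue) tournament $QR_7$ on the vertex set $\mathbb{Z}_7$, with $\overrightarrow{ij}\in A(QR_7)$ whenever $j-i\in\{1,2,4\}$. It is vertex-transitive (via translations $x\mapsto x+c$) and arc-transitive (the maps $x\mapsto qx$ with $q\in\{1,2,4\}$ are automorphisms fixing $0$ and permuting $\{1,2,4\}$ transitively). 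The one property driving the argument, which I would record first, is: for every ordered pair of distinct vertices $(a,b)$ and all signs $\varepsilon_1,\varepsilon_2\in\{+,-\}$, the set $N^{\varepsilon_1}(a)\cap N^{\varepsilon_2}(b)$ is non-empty. By vertex- and arc-transitivity it suffices to check this for $(a,b)=(0,1)$, which is a short finite computation: $N^+(0)=\{1,2,4\}$, $N^-(0)=\{3,5,6\}$, $N^+(1)=\{2,3,5\}$, $N^-(1)=\{0,4,6\}$, and each of the four intersections $\{1,2,4\}\cap\{2,3,5\}$, $\{1,2,4\}\cap\{0,4,6\}$, $\{3,5,6\}\cap\{2,3,5\}$, $\{3,5,6\}\cap\{0,4,6\}$ is non-empty.

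Next I would reduce to maximal outerplanar graphs. Since $\chi_o$ is non-decreasing under taking subgraphs (both of the underlying graph and of its orientation), and since every outerplanar graph is a subgraph of a maximal outerplanar graph on the same vertex set, it suffices to show that every orientation $\overrightarrow{G}$ of a maximal outerplanar graph $G$ admits a homomorphism to $QR_7$. I would prove this by induction on $|V(G)|$. The base cases $|V(G)|\le 3$ are immediate, because $QR_7$ is a tournament and contains, as sub-tournaments, every orientation of $K_1$, $K_2$ and $K_3$ (one checks that both the transitive triangle and the directed $3$-cycle occur in $QR_7$, e.g.\ $0\!\to\!1\!\to\!2$, $0\!\to\!2$ and $0\!\to\!1\!\to\!3\!\to\!0$).

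For the inductive step, I would use the standard fact that a maximal outerplanar graph on $n\ge 4$ vertices is a triangulation of an $n$-gon and therefore has an \emph{ear}: a vertex $v$ of degree $2$ whose two neighbours $u_1,u_2$ are adjacent. Deleting $v$ yields the triangulation $G'=G-v$ of an $(n-1)$-gon, which is again maximal outerplanar, so by the induction hypothesis the induced orientation $\overrightarrow{G'}$ admits a homomorphism $\phi$ to $QR_7$. Since $u_1u_2\in E(G')$ we have $\phi(u_1)\ne\phi(u_2)$, and hence, by the neighbourhood-intersection property of $QR_7$, the set $N^{\varepsilon_1}(\phi(u_1))\cap N^{\varepsilon_2}(\phi(u_2))$ is non-empty, where $\varepsilon_1,\varepsilon_2$ are the signs recording the directions of the two arcs of $\overrightarrow{G}$ incident with $v$. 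Choosing $w$ in this set and setting $\phi(v)=w$ extends $\phi$ to a homomorphism $\overrightarrow{G}\to QR_7$, completing the induction and thus the proof; as a byproduct this shows $QR_7$ is a universal target for oriented outerplanar graphs.

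The main obstacle, and the reason the reduction to maximal outerplanar graphs is genuinely needed rather than a mere convenience, lies in the extension step. If one only used $2$-degeneracy and removed an arbitrary degree-$2$ vertex whose neighbours are non-adjacent, those neighbours could receive the same colour $a$, and extending would require a vertex of $QR_7$ that is simultaneously an out-neighbour and an in-neighbour of $a$ — impossible in a tournament. Restricting attention to ears forces $\phi(u_1)\ne\phi(u_2)$, which is precisely the hypothesis under which the $QR_7$ property applies. Apart from this point, the only real content is the finite verification of that property; the remainder is routine bookkeeping about polygon triangulations and about the equivalence between oriented $7$-colorings and homomorphisms to digon-free digraphs on $7$ vertices.
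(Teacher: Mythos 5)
Your proof is correct: the four neighbourhood intersections in the Paley tournament $QR_7$ are as you compute, the ear-induction on maximal outerplanar graphs is sound, and a homomorphism to the digon-free tournament $QR_7$ is indeed an oriented $7$-coloring. The paper itself does not prove Theorem~\ref{orientedouterplanar} but cites it from Sopena (1997), whose argument is essentially the one you give -- $QR_7$ as a universal target, using exactly the extension property you verify, applied along a 2-tree/ear-type decomposition -- so your proposal coincides with the standard proof.
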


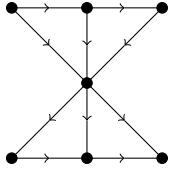
\begin{figure}

\centering
\begin{tikzpicture}

\filldraw [black] (0,0) circle (2pt) {node[below]{}};
\filldraw [black] (1,0) circle (2pt) {node[below]{}};
\filldraw [black] (2,0) circle (2pt) {node[below]{}};

\filldraw [black] (1,1) circle (2pt) {node[left]{}};

\filldraw [black] (0,2) circle (2pt) {node[above]{}};
\filldraw [black] (1,2) circle (2pt) {node[above]{}};
\filldraw [black] (2,2) circle (2pt) {node[above]{}};

\draw[->] (0,2) -- (.5,1.5);
\draw[-] (.5,1.5) -- (1,1);

\draw[->] (1,2) -- (1,1.5);
\draw[-] (1,1.5) -- (1,1);

\draw[->] (2,2) -- (1.5,1.5);
\draw[-] (1.5,1.5) -- (1,1);

\draw[->] (1,1) -- (.5,.5);
\draw[-] (.5,.5) -- (0,0);

\draw[->] (1,1) -- (1,.5);
\draw[-] (1,.5) -- (1,0);

\draw[->] (1,1) -- (1.5,.5);
\draw[-] (1.5,.5) -- (2,0);

\draw[->] (0,0) -- (.5,0);
\draw[-] (.5,0) -- (1,0);

\draw[->] (1,0) -- (1.5,0);
\draw[-] (1.5,0) -- (2,0);

\draw[->] (0,2) -- (.5,2);
\draw[-] (.5,2) -- (1,2);

\draw[->] (1,2) -- (1.5,2);
\draw[-] (1.5,2) -- (2,2);


\end{tikzpicture}

\caption{\ESOK{The outerplanar} oclique $\vec{O}$ of order 7.}\label{fig outerplanarmax}
\end{figure}

\ESOK{The structure of ocliques is much less understood than in the undirected case, where a clique is nothing but a complete graph.
For instance, the exact value of the minimum number of arcs in an oclique of order $k$ is not known yet. F\"uredi, Horak, Pareek and Zhu~\cite{FHPZ98}, and Kostochka, 
\L uczak, Simonyi and Sopena~\cite{KLSS99},
independently proved that this number is $(1+o(1))k\log_2 k$.
}

The questions related to the \ESOK{absolute oriented} clique number \ESOK{of planar graphs} have been first asked by Klostermeyer and MacGillivray~\cite{36} in 2002. 
In their paper they asked: ``what is the maximum order of a planar oclique?'', which is equivalent to 
asking ``what is the  \ESOK{absolute oriented} clique number of planar graphs?''.
 In order to find the answer to this question, Sopena~\cite{16} found a planar oclique of order 15 (Figure~\ref{fig planarmax}) while  Klostermeyer and MacGillivray~\cite{36} showed that there is no planar oclique of 
order more than 36, improving the upper bound of 80 which can be obtained by using Lemma~\ref{lem basic-in} and Theorem~\ref{orientedacyclic},  and conjectured that the maximum order of a planar oclique  is 15. Later in 2011, the conjecture was positively settled~\cite{sen} 
and we will state it as Theorem~\ref{orientedplanarabsolute}(a) in this article.

Klostermeyer and MacGillivray also showed that any outerplanar oclique of order 7 must contain a particular unique spanning subgraph (oriented).

\begin{theorem}[Klostermeyer and MacGillivray, 2002]~\label{gary-outer} 
An oriented outerplanar graph of order at least 7 is an oclique  if and only if it contains \ESOK{the outerplanar oclique $\overrightarrow{O}$ depicted in Figure~\ref{fig outerplanarmax}} as a spanning subgraph.
\end{theorem}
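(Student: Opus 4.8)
The plan is to prove the two implications separately. The backward implication is immediate and uses no outerplanarity: if the oriented outerplanar graph $\overrightarrow{G}$ contains $\overrightarrow{O}$ as a spanning subgraph, then, $\overrightarrow{O}$ being an oclique, any two of its vertices are adjacent or joined by a $2$-dipath, and both relations are preserved when arcs are added, so they hold in $\overrightarrow{G}$ as well; hence $\overrightarrow{G}$ is an oclique.

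For the forward implication, let $\overrightarrow{G}$ be an outerplanar oclique of order at least $7$; by Theorem~\ref{orientedouterplanar} its order equals $7$, and we write $G$ for its underlying graph. The crux is to show that \emph{$G$ has a dominating vertex} $v$, i.e., a vertex adjacent to the other six. If $G$ is not $2$-connected, let $c$ be a cut vertex: for any two vertices $x,y$ lying in distinct components of $G-c$, the unique internal vertex of a $2$-dipath joining them must be $c$, so $c$ is adjacent to both; since every vertex lies in some component of $G-c$ while a second component always exists, $c$ is adjacent to everything. If $G$ is $2$-connected, it consists of its unique Hamiltonian cycle $C$ together with at most four pairwise non-crossing chords; picking a vertex $v$ of degree $2$, with neighbours $w_1,w_2$ consecutive on $C$, the oclique condition forces every non-neighbour of $v$ to be adjacent to $w_1$ or to $w_2$, so $N(w_1)\cup N(w_2)$ omits at most $\{w_1,w_2\}$; combined with the fact that $\{w_1,w_2\}$ cannot have three common neighbours (this would create a $K_{2,3}$, forbidden in an outerplanar graph), a short discussion of the admissible chord configurations of a heptagon shows that $G$ must be the fan $K_1+P_6$, whose apex is dominating. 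I expect this $2$-connected subcase to be the main obstacle, as it requires a careful enumeration.

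Now fix a dominating vertex $v$. Outerplanarity forces $G-v$ to be a linear forest on six vertices: a vertex of degree at least $3$ in $G-v$ together with $v$ would form a $K_{2,3}$, and a cycle in $G-v$ together with $v$ would contain a $K_4$-minor. Let $I$ and $O$ be the in- and out-neighbourhoods of $v$. For any two vertices of $G-v$ at distance at least $3$ in $G-v$ (in particular, in different components), the only possible common neighbour is $v$, so such a pair must have one endpoint in $I$ and the other in $O$; equivalently, $(I,O)$ properly $2$-colours the ``far'' graph on $V(G-v)$, whose edges are the pairs at distance at least $3$. Since that graph contains a triangle whenever $G-v$ has three or more components, $G-v$ has at most two of them; a short argument — the two components would then be monochromatic of opposite colours, while a component of order at least $4$ contains two vertices at distance at least $3$ — shows that $G-v$ is either $P_6$ or $P_3+P_3$, with $\{I,O\}$ the forced bipartition in each case. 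Finally, applying the oclique condition to the pairs at distance exactly $2$ inside a component — whose $2$-dipath cannot pass through $v$ — forces the relevant copies of $P_3$ to be oriented as directed paths, and one then checks directly that in both cases the resulting oriented graph contains $\overrightarrow{O}$ as a spanning subgraph (in the $P_3+P_3$ case it equals $\overrightarrow{O}$). This completes the proof.
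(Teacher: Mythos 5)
The paper does not actually prove this statement---it is imported from Klostermeyer and MacGillivray~\cite{36} with a citation only---so there is no internal proof to compare against; judged on its own terms, your argument is correct and gives a self-contained elementary proof. The backward direction and the reduction to order exactly $7$ via Theorem~\ref{orientedouterplanar} are fine. The dominating-vertex dichotomy works: a cut vertex must be the internal vertex of every $2$-dipath joining vertices of different components of its deletion, hence is adjacent to everything; and in the $2$-connected case the conclusion (the fan $K_1+P_6$) is the right one, since a $2$-connected outerplanar graph on $7$ vertices with a dominating vertex has all four of its chords at that vertex. The second half is complete and correct: $G-v$ is a linear forest by the $K_{2,3}$ and $K_4$-minor obstructions, the ``far'' graph must be properly $2$-coloured by $(I,O)$, which excludes three or more components and any component on at least $4$ vertices, leaving $P_6$ and $P_3+P_3$ with the bipartition forced, and the monochromatic distance-$2$ pairs force the two relevant copies of $P_3$ to be directed paths, so that (discarding the middle arc in the $P_6$ case, and noting that reversing one directed $P_3$ still yields an isomorphic copy) $\overrightarrow{O}$ appears as a spanning subgraph.

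The one genuinely sketchy step is the enumeration in the $2$-connected subcase, and there the two constraints you cite are not by themselves sufficient: labelling the heptagon $v,w_1,u_1,u_2,u_3,u_4,w_2$, the graph $C_7$ plus the two non-crossing chords $u_2w_1$ and $u_3w_2$ satisfies both ``every non-neighbour of $v$ is adjacent to $w_1$ or $w_2$'' and ``$w_1,w_2$ have at most two common neighbours'', yet is not the fan. To close the case you must also use other non-adjacent pairs, e.g.\ $(u_1,u_4)$ and $(u_1,w_2)$: their need for a common neighbour, combined with the non-crossing condition, rules out the mixed attachment ($u_2$ to $w_1$ and $u_3$ to $w_2$) outright, and in the remaining symmetric cases forces all four chords to be incident to $w_1$ (resp.\ $w_2$), i.e.\ the fan. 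This is a short verification, so the gap is easily filled, but as written your ``short discussion'' needs these additional pairs and not just the two constraints you invoke.
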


Bensmail, Duvignau and Kirgizov~\cite{oclique-complexity} showed that given an undirected graph $G$ it is NP-hard to decide if $G$ has an orientation $\overrightarrow{G}$
such that $\overrightarrow{G}$ is an oclique (the similar problem, but using the directed distance instead of the {\em weak} directed distance, was shown to be also NP-complete by 
Chv\'atal and Thomassen in~\cite{CT}). Now  it is easy to notice from Theorem~\ref{gary-outer} that an undirected outerplanar graph of order
at least 7 can be oriented as an oclique if and only if it contains \ESOK{the graph} $O$ (\ESOK{the} underlying undirected graph 
of \ESOK{the oriented graph} $\overrightarrow{O}$ from Figure~\ref{fig outerplanarmax}) as a spanning subgraph. We extend this idea to characterize every outerplanar graph that can be oriented as an oclique in the following result. 

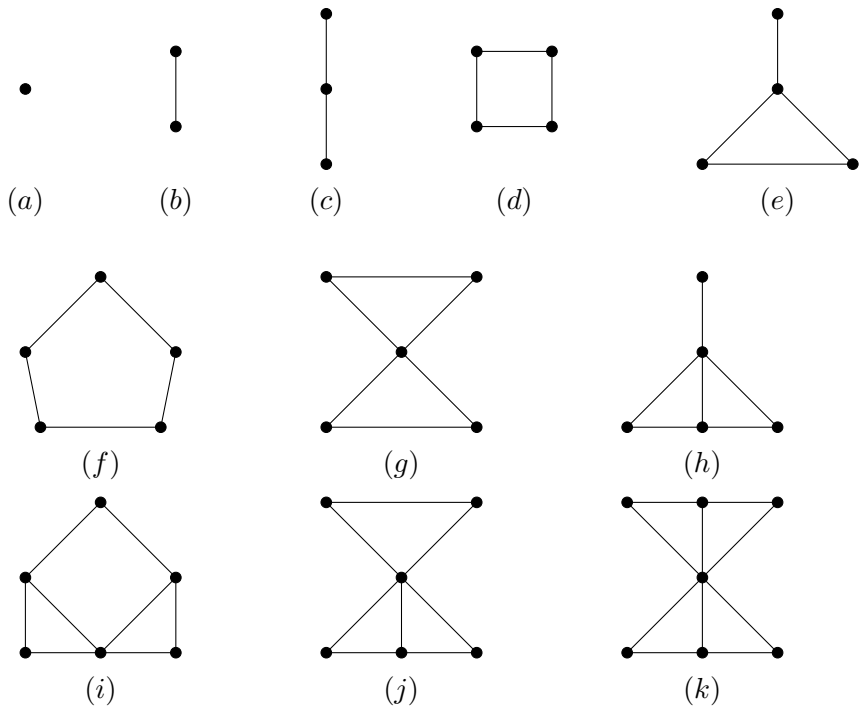
\begin{figure}

\centering
\begin{tikzpicture}

\filldraw [black] (0,6) circle (2pt) {node[left]{}};

\node at (0,4.5) {$(a)$};

\filldraw [black] (2,6.5) circle (2pt) {node[above]{}};
\filldraw [black] (2,5.5) circle (2pt) {node[below left]{}};

\draw[-] (2,6.5) -- (2,5.5);

\node at (2,4.5) {$(b)$};

\filldraw [black] (4,7) circle (2pt) {node[above]{}};
\filldraw [black] (4,6) circle (2pt) {node[below left]{}};
\filldraw [black] (4,5) circle (2pt) {node[below left]{}};

\draw[-] (4,7) -- (4,5);

\node at (4,4.5) {$(c)$};

\filldraw [black] (6,6.5) circle (2pt) {node[above]{}};
\filldraw [black] (6,5.5) circle (2pt) {node[below left]{}};
\filldraw [black] (7,6.5) circle (2pt) {node[below left]{}};
\filldraw [black] (7,5.5) circle (2pt) {node[below left]{}};

\draw[-] (6,6.5) -- (6,5.5);
\draw[-] (7,6.5) -- (7,5.5);
\draw[-] (6,6.5) -- (7,6.5);
\draw[-] (7,5.5) -- (6,5.5);

\node at (6.5,4.5) {$(d)$};

\filldraw [black] (9,5) circle (2pt) {node[above]{}};
\filldraw [black] (11,5) circle (2pt) {node[below left]{}};
\filldraw [black] (10,6) circle (2pt) {node[below left]{}};
\filldraw [black] (10,7) circle (2pt) {node[below left]{}};

\draw[-] (9,5) -- (11,5);
\draw[-] (9,5) -- (10,6);
\draw[-] (11,5) -- (10,6);
\draw[-] (10,6) -- (10,7);

\node at (10,4.5) {$(e)$};

\filldraw [black] (1,3.5) circle (2pt) {node[above]{}};
\filldraw [black] (0,2.5) circle (2pt) {node[above]{}};
\filldraw [black] (2,2.5) circle (2pt) {node[above]{}};
\filldraw [black] (0.2,1.5) circle (2pt) {node[above]{}};
\filldraw [black] (1.8,1.5) circle (2pt) {node[above]{}};

\draw[-] (1,3.5) -- (0,2.5) -- (0.2,1.5) -- (1.8,1.5) -- (2,2.5) -- (1,3.5);

\node at (1,1) {$(f)$};

\filldraw [black] (4,3.5) circle (2pt) {node[above]{}};
\filldraw [black] (6,3.5) circle (2pt) {node[above]{}};
\filldraw [black] (5,2.5) circle (2pt) {node[above]{}};
\filldraw [black] (4,1.5) circle (2pt) {node[above]{}};
\filldraw [black] (6,1.5) circle (2pt) {node[above]{}};

\draw[-] (4,3.5) -- (6,3.5);
\draw[-] (4,1.5) -- (6,1.5);

\draw[-] (5,2.5) -- (6,3.5);
\draw[-] (5,2.5) -- (4,3.5);
\draw[-] (5,2.5) -- (6,1.5);
\draw[-] (5,2.5) -- (4,1.5);

\node at (5,1) {$(g)$};

\filldraw [black] (9,3.5) circle (2pt) {node[above]{}};
\filldraw [black] (9,2.5) circle (2pt) {node[above]{}};
\filldraw [black] (10,1.5) circle (2pt) {node[above]{}};
\filldraw [black] (9,1.5) circle (2pt) {node[above]{}};
\filldraw [black] (8,1.5) circle (2pt) {node[above]{}};

\draw[-] (10,1.5) -- (8,1.5);

\draw[-] (9,2.5) -- (9,3.5);
\draw[-] (9,2.5) -- (8,1.5);
\draw[-] (9,2.5) -- (9,1.5);
\draw[-] (9,2.5) -- (10,1.5);

\node at (9,1) {$(h)$};

\filldraw [black] (1,.5) circle (2pt) {node[above]{}};
\filldraw [black] (0,-.5) circle (2pt) {node[above]{}};
\filldraw [black] (2,-.5) circle (2pt) {node[above]{}};
\filldraw [black] (1,-1.5) circle (2pt) {node[above]{}};
\filldraw [black] (0,-1.5) circle (2pt) {node[above]{}};
\filldraw [black] (2,-1.5) circle (2pt) {node[above]{}};

\draw[-] (1,.5) -- (0,-.5) -- (1,-1.5) -- (2,-.5) -- (1,.5);
\draw[-] (0,-.5) -- (0,-1.5) -- (2,-1.5) -- (2,-.5);

\node at (1,-2) {$(i)$};

\filldraw [black] (4,.5) circle (2pt) {node[above]{}};
\filldraw [black] (6,.5) circle (2pt) {node[above]{}};
\filldraw [black] (5,-.5) circle (2pt) {node[above]{}};
\filldraw [black] (4,-1.5) circle (2pt) {node[above]{}};
\filldraw [black] (5,-1.5) circle (2pt) {node[above]{}};
\filldraw [black] (6,-1.5) circle (2pt) {node[above]{}};

\draw[-] (4,.5) -- (6,.5);
\draw[-] (4,-1.5) -- (6,-1.5);

\draw[-] (5,-.5) -- (6,.5);
\draw[-] (5,-.5) -- (4,.5);
\draw[-] (5,-.5) -- (6,-1.5);
\draw[-] (5,-.5) -- (5,-1.5);
\draw[-] (5,-.5) -- (4,-1.5);

\node at (5,-2) {$(j)$};

\filldraw [black] (8,.5) circle (2pt) {node[above]{}};
\filldraw [black] (9,.5) circle (2pt) {node[above]{}};
\filldraw [black] (10,.5) circle (2pt) {node[above]{}};
\filldraw [black] (9,-.5) circle (2pt) {node[above]{}};
\filldraw [black] (10,-1.5) circle (2pt) {node[above]{}};
\filldraw [black] (9,-1.5) circle (2pt) {node[above]{}};
\filldraw [black] (8,-1.5) circle (2pt) {node[above]{}};

\draw[-] (10,.5) -- (8,.5);
\draw[-] (10,-1.5) -- (8,-1.5);

\draw[-] (9,-.5) -- (8,.5);
\draw[-] (9,-.5) -- (9,.5);
\draw[-] (9,-.5) -- (10,.5);
\draw[-] (9,-.5) -- (8,-1.5);
\draw[-] (9,-.5) -- (9,-1.5);
\draw[-] (9,-.5) -- (10,-1.5);

\node at (9,-2) {$(k)$};

\end{tikzpicture}

\caption{List of edge-minimal oclique spanning subgraphs of all outerplanar ocliques}\label{fig outerplanarocliques}
\end{figure}

\begin{theorem}~\label{outerall}
An undirected outerplanar graph can be oriented as an oclique if and only if it contains one of the 
graphs \ESOK{depicted in} Figure~\ref{fig outerplanarocliques} as a spanning subgraph.
\end{theorem}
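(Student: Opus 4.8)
The plan is to prove both implications through one simple \emph{monotonicity} observation. If $\overrightarrow{H}$ is an oclique and $G$ is a simple graph with $V(G)=V(H)$ and $E(G)\supseteq E(H)$, then any orientation $\overrightarrow{G}$ of $G$ extending $\overrightarrow{H}$ (orient the new edges arbitrarily — no digon is created since $G$ is simple) is again an oclique: every pair of vertices of $G$ is already at weak directed distance at most $2$ in $\overrightarrow{H}$, and adding arcs cannot destroy a $2$-dipath or an adjacency. Hence ``orientable as an oclique'' is closed under adding edges, so the outerplanar graphs orientable as an oclique are exactly those containing a spanning-subgraph-minimal such graph, and the theorem is equivalent to saying that the $11$ graphs of Figure~\ref{fig outerplanarocliques} are precisely the minimal outerplanar graphs orientable as an oclique. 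For the ``if'' direction it then suffices to exhibit, for each of the $11$ graphs (each visibly outerplanar), one orientation that is an oclique; this is a finite check that can be recorded explicitly, and along the way one also checks that deleting any edge destroys this property, giving edge-minimality.

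For the ``only if'' direction, let $\overrightarrow{G}$ be an oclique whose underlying graph $G$ is outerplanar, and write $n=|V(G)|$. If $n\ge 7$, then by Theorem~\ref{gary-outer} $\overrightarrow{G}$ contains $\overrightarrow{O}$ of Figure~\ref{fig outerplanarmax} as a spanning subgraph, so $G$ contains the graph of Figure~\ref{fig outerplanarocliques}(k) as a spanning subgraph. The cases $n\le 2$ are immediate ($K_1$ and $K_2$, matching (a) and (b)), so the substance of the proof is the range $3\le n\le 6$, which I would handle structurally using three necessary conditions for $\overrightarrow{G}$ to be an oclique. First, $G$ has diameter at most $2$, since two non-adjacent vertices must share a common neighbour (the internal vertex of the connecting $2$-dipath). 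Second, a \emph{pendant obstruction}: if $v$ has degree $1$ with neighbour $w$, then $w$ is a universal vertex of $G$, the orientation at $w$ is forced up to reversal ($v$ must be the unique in-neighbour, or the unique out-neighbour, of $w$, because for every other vertex $x$ the pair $\{v,x\}$ has $w$ as its \emph{only} common neighbour), and consequently no non-adjacent pair of non-pendant vertices may have $w$ as its only common neighbour; in particular, when $n\ge 4$ at most one vertex of $G$ has degree $1$, since three pairwise non-adjacent low-degree vertices at a common $w$ would force three pairwise-``mixed'' in/out requirements at $w$, which is impossible. A looser version of the same idea constrains a degree-$2$ vertex: its two neighbours must dominate all remaining vertices. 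Third, $G$ is $K_4$-minor-free and $K_{2,3}$-minor-free.

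Since every outerplanar graph on $n$ vertices has a vertex of degree at most $2$ and at most $2n-3$ edges, only finitely many outerplanar graphs arise for $3\le n\le 6$, and the three conditions above prune this list to a short collection of candidate minimal graphs. For each surviving candidate I would either display an oclique orientation (as for the triangle-plus-pendant $=$ paw, $C_4$, $C_5$, the bowtie, the diamond-plus-pendant, and the two $6$-vertex graphs) or derive a contradiction — typically by exhibiting an unsatisfiable system of ``one in-neighbour, one out-neighbour'' requirements forced at a high-degree vertex by pairs whose only common neighbour is that vertex. Verifying that each candidate that does admit such an orientation contains one of (c)--(j) as a spanning subgraph, and that no other minimal outerplanar graph occurs, completes the argument. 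This enumeration, together with the per-graph orientation constructions and non-existence arguments, is the bulk of the work and the main obstacle, though each individual case is elementary once the structural reductions are in place.
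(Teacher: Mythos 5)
Your framework is sound and is essentially the paper's own route: the monotonicity observation (adding edges to an oclique-orientable graph preserves the property, so the theorem reduces to identifying the spanning-subgraph-minimal outerplanar graphs orientable as ocliques), the appeal to Theorem~\ref{gary-outer} for order at least $7$, the trivial orders $1$--$3$, and a finite structural analysis for orders $4$--$6$. Your auxiliary observations are also correct: a degree-$1$ vertex forces its neighbour $w$ to be universal with the pendant vertex as the unique in- or out-neighbour of $w$, hence at most one pendant vertex when $n\ge 4$; non-adjacent pairs need a common neighbour (diameter $2$); and outerplanarity gives $K_4$- and $K_{2,3}$-minor-freeness.

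The genuine gap is that the part of the argument where all the difficulty lives — the case analysis for $4\le n\le 6$ — is only announced, not carried out. You assert that your three conditions ``prune this list to a short collection of candidate minimal graphs'' and that each survivor can then be dispatched, but this is precisely the content that must be verified, and the stated conditions alone do not do the pruning. For instance, for $n=6$ with maximum degree $3$ or $4$, the candidate graphs can have diameter $2$, no pendant vertices, and be outerplanar, yet the paper must still rule them out (or show they force one of the listed graphs, contradicting minimality) by a delicate interplay between the $2$-dipath requirements and the forbidden minors: connecting the required pairs by $2$-dipaths forces edges that create a $K_{2,3}$- or $K_4$-minor, or forces $C_5$ or graph~(i) as a spanning subgraph. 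Similarly, for $n=5$, $\Delta=4$, the five-edge configuration is eliminated by an orientation-forcing argument (fixing one arc at the apex forces arcs that leave two leaves with no possible $2$-dipath), and for $n=6$, $\Delta=5$ one must argue $|N^+(v_1)|=3$, $|N^-(v_1)|=2$ and that outerplanarity then forces exactly graph~(j). None of these steps follows from diameter~$2$, the pendant obstruction, or minor-freeness by themselves; they require the kind of bespoke orientation and planarity arguments the paper supplies case by case. Until you actually execute this enumeration and these eliminations — and confirm that the surviving minimal graphs are exactly (c)--(j) and nothing else — the ``only if'' direction for $3\le n\le 6$ remains unproven; what you have is a correct plan, not a proof.
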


We also prove a result similar to Theorem~\ref{gary-outer} for planar graphs which implies 
 Theorem~\ref{orientedplanarabsolute}(a) (that is, the \ESOK{absolute oriented} clique number of the family of planar graphs is 15).

\begin{theorem}\label{unique}
A planar oclique  has order at most 15 and every planar oclique of order 15 contains the planar oclique $\overrightarrow{P}$ depicted in Figure~\ref{fig planarmax} as a \ESOK{spanning} subgraph. 
\end{theorem}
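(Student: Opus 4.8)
The plan is to work with the underlying graph $G$ of a planar oclique $\overrightarrow{G}$, and set $n = |V(G)|$. Since adding an edge with either orientation to an oclique yields an oclique (no digon is created and no pairwise condition is destroyed), we may assume $G$ is a maximal planar graph, i.e.\ a triangulation; in particular, for $n \ge 4$ the neighbourhood of every vertex induces a cycle. We also use the basic necessary condition that $G$ has diameter at most $2$: any two non-adjacent vertices of an oclique lie on a common $2$-dipath and hence have a common neighbour. (Note that diameter-$2$ triangulations can be arbitrarily large -- bipyramids over cycles are examples -- so the bound really exploits the orientation, not merely the diameter.)

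Fix a planar embedding of $G$, a vertex $v$ of maximum degree $\Delta$, and write $V(G) = \{v\} \cup N(v) \cup R$, where $N(v)$ induces a cycle $C$ and $R = V(G) \setminus N[v]$. Each vertex of $R$ is at distance exactly $2$ from $v$, hence has a neighbour on $C$; moreover the rotation around $v$ confines the attachment of $R$ to $C$, imposing an outerplanar-type structure on the bipartite graph between $R$ and $C$ and on $G[R]$. I would then run a case analysis on $\Delta$. When $\Delta$ is small, diameter $2$ and planarity already bound $n$: each of the $\binom{n}{2}$ vertex pairs needs a common neighbour, each vertex serves as common neighbour for only a bounded number of pairs, and a direct count forces $n \le 15$ -- in fact much less. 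When $\Delta$ is large, the orientation becomes the binding constraint.

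In the large-$\Delta$ regime I would partition $N(v) = N^+ \cup N^-$ according to the direction of the arc between $v$ and the vertex; for two non-adjacent vertices of the same part, the connecting $2$-dipath cannot use $v$, so its internal vertex lies on $C$ or in $R$. Combining the outerplanar oclique characterisations (Theorems~\ref{gary-outer} and~\ref{outerall}), which constrain the orientation $\overrightarrow{G}[N(v)]$ and hence which same-part pairs can be resolved inside $N(v)$, with the planar attachment of $R$ to $C$ and an Euler-formula edge count across $\{v\}$, $N(v)$ and $R$, I would bound $|R|$ in terms of $\Delta$ so that $n = 1 + \Delta + |R| \le 15$. For the equality statement, assume $n = 15$. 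Then every inequality used in the relevant case is tight, which pins down $\Delta$, forces $\overrightarrow{G}[N(v)]$ to be an orientation containing a prescribed edge-minimal outerplanar oclique from Figure~\ref{fig outerplanarocliques}, and forces the attachment of $R$ to $C$ to be the unique extremal planar pattern. Propagating the oclique condition -- every remaining non-adjacent pair must be joined by a $2$-dipath, all of them realised simultaneously by one orientation -- then cascades: first the still-missing edges, then the arc directions, become forced, and one reads off that $\overrightarrow{G}$ must contain the planar oclique $\overrightarrow{P}$ of Figure~\ref{fig planarmax} as a spanning subgraph.

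The main obstacle is this last forcing step. Once the counts are tight the underlying graph is pinned to a short list of candidate triangulations, but excluding every orientation of those candidates other than the ones containing $\overrightarrow{P}$ requires careful bookkeeping of how the orientation constraints around the many $2$-dipaths interact, so as to guarantee global rather than merely pairwise consistency; this is where the bulk of the case analysis lives. By contrast, the bound $n \le 15$ itself is comparatively robust and follows from the counting once the triangulation reduction and the structural facts above are in place.
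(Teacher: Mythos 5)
Your plan diverges from the paper's proof, but as it stands it has a genuine gap at its core. The quantitative heart of your argument is never supplied. The small-$\Delta$ counting (each non-adjacent pair needs a common neighbour, a vertex of degree $d$ serves at most $\binom{d}{2}$ pairs, $m\le 3n-6$) gives roughly $n\le \Delta^2-\Delta+7$, which forces $n\le 15$ only for $\Delta\le 3$; so the ``large-$\Delta$'' regime is really $4\le\Delta\le 14$ and carries all the weight. There, the only orientation tool you cite --- the outerplanar (relative) oclique structure of $\overrightarrow{G}[N(v)]$ --- does not apply unless $v$ dominates the graph: two out-neighbours (or two in-neighbours) of $v$ must be joined by a $2$-dipath avoiding $v$, but its internal vertex may lie in $R$ rather than in $N(v)$, so $N^+(v)$ and $N^-(v)$ need not be relative oriented cliques of the outerplanar graph $G[N(v)]$, and neither $\Delta\le 14$ nor any bound on $|R|$ follows. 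An Euler-formula count cannot substitute, precisely because (as you note) diameter-$2$ planar triangulations are unbounded. You also misplace the difficulty: the bound $n\le 15$ is not ``comparatively robust''; it is the bulk of the work, and your sketch does not establish it.

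What is missing is exactly the ingredient the paper builds on: by Goddard and Henning, a planar graph of diameter $2$ has domination number at most $2$ (with one $9$-vertex exception). The paper then splits into two cases. If some vertex $v$ dominates, then every $2$-dipath between same-side neighbours of $v$ does stay inside the outerplanar graph $G[N(v)]$, so Theorem~\ref{theoremouterrelative} gives $|N^+(v)|,|N^-(v)|\le 7$, hence order at most $15$, and the tightness analysis in this case forces the spanning copy of $\overrightarrow{P}$ of Figure~\ref{fig planarmax}. If no vertex dominates, the paper takes a dominating pair $\{x,y\}$ maximizing $|N(x)\cap N(y)|$, partitions the plane into regions bounded by $x$, $y$ and their common neighbours, and through a long case analysis (bounding $|C|$ from below and above until a contradiction) shows the order is at most $14$. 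Your max-degree vertex need not dominate, and your proposal contains no counterpart to this second case, which is where the real content lies; without a domination-type theorem or an equivalent structural handle on the vertices outside $N[v]$, the bound $|R|\le 15-1-\Delta$ is not obtainable from the tools you name. (A smaller point: after triangulating, you must also argue that the copy of $\overrightarrow{P}$ found in the triangulation uses only arcs of the original oclique; the paper does this by checking that deleting any arc of that copy destroys the oclique property.)
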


\begin{figure} [t]

\centering

\centering
\begin{tikzpicture}

\filldraw [black] (0,0) circle (2pt) {node[below]{}};
\filldraw [black] (2,0) circle (2pt) {node[below]{}};
\filldraw [black] (4,0) circle (2pt) {node[below]{}};

\filldraw [black] (2,2) circle (2pt) {node[left]{}};

\filldraw [black] (0,4) circle (2pt) {node[above]{}};
\filldraw [black] (2,4) circle (2pt) {node[above]{}};
\filldraw [black] (4,4) circle (2pt) {node[above]{}};


\filldraw [black] (8,0) circle (2pt) {node[below]{}};
\filldraw [black] (10,0) circle (2pt) {node[below]{}};
\filldraw [black] (12,0) circle (2pt) {node[below]{}};

\filldraw [black] (10,2) circle (2pt) {node[right]{}};

\filldraw [black] (8,4) circle (2pt) {node[above]{}};
\filldraw [black] (10,4) circle (2pt) {node[above]{}};
\filldraw [black] (12,4) circle (2pt) {node[above]{}};

\filldraw [black] (6,2) circle (2pt) {node[above]{}};


\draw[->] (0,4) -- (1,3);
\draw[-] (1,3) -- (2,2);

\draw[->] (2,4) -- (2,3);
\draw[-] (2,3) -- (2,2);

\draw[->] (4,4) -- (3,3);
\draw[-] (3,3) -- (2,2);

\draw[->] (2,2) -- (1,1);
\draw[-] (1,1) -- (0,0);

\draw[->] (2,2) -- (2,1);
\draw[-] (2,1) -- (2,0);

\draw[->] (2,2) -- (3,1);
\draw[-] (3,1) -- (4,0);

\draw[->] (0,0) -- (1,0);
\draw[-] (1,0) -- (2,0);

\draw[->] (2,0) -- (3,0);
\draw[-] (3,0) -- (4,0);

\draw[->] (0,4) -- (1,4);
\draw[-] (1,4) -- (2,4);

\draw[->] (2,4) -- (3,4);
\draw[-] (3,4) -- (4,4);


\draw[->] (8,4) -- (9,3);
\draw[-] (9,3) -- (10,2);

\draw[->] (10,4) -- (10,3);
\draw[-] (10,3) -- (10,2);

\draw[->] (12,4) -- (11,3);
\draw[-] (11,3) -- (10,2);

\draw[->] (10,2) -- (9,1);
\draw[-] (9,1) -- (8,0);

\draw[->] (10,2) -- (10,1);
\draw[-] (10,1) -- (10,0);

\draw[->] (10,2) -- (11,1);
\draw[-] (11,1) -- (12,0);

\draw[->] (8,0) -- (9,0);
\draw[-] (9,0) -- (10,0);

\draw[->] (10,0) -- (11,0);
\draw[-] (11,0) -- (12,0);

\draw[->] (8,4) -- (9,4);
\draw[-] (9,4) -- (10,4);

\draw[->] (10,4) -- (11,4);
\draw[-] (11,4) -- (12,4);


\draw (2,0) .. controls (3,-2) and (4,-2) .. (6,2);

\draw[->] (3.2,-1.3) -- (3.2001,-1.3);

\draw (10,0) .. controls (9,-2) and (8,-2) .. (6,2);

\draw[->] (8.8,-1.3) -- (8.8001,-1.3);

\draw (2,4) .. controls (3,6) and (4,6) .. (6,2);

\draw[->] (3.2,5.3) -- (3.2001,5.3);

\draw (10,4) .. controls (9,6) and (8,6) .. (6,2);

\draw[->] (8.8,5.3) -- (8.8001,5.3);

\draw (0,0) .. controls (2,-4) and (4,-4) .. (6,2);

\draw[->] (2.7,-2.8) -- (2.7001,-2.8);

\draw (12,0) .. controls (10,-4) and (8,-4) .. (6,2);

\draw[->] (9.3,-2.8) -- (9.3001,-2.8);

\draw (0,4) .. controls (2,8) and (4,8) .. (6,2);

\draw[->] (2.7,6.8) -- (2.7001,6.8);

\draw (12,4) .. controls (10,8) and (8,8) .. (6,2);

\draw[->] (9.3,6.8) -- (9.3001,6.8);


\draw[->] (4,0) -- (5,1);
\draw[-] (5,1) -- (6,2);

\draw[->] (2,2) -- (4,2);
\draw[-] (4,2) -- (6,2);

\draw[->] (4,4) -- (5,3);
\draw[-] (5,3) -- (6,2);

\draw[->] (6,2) -- (7,1);
\draw[-] (7,1) -- (8,0);

\draw[->] (6,2) -- (8,2);
\draw[-] (8,2) -- (10,2);

\draw[->] (6,2) -- (7,3);
\draw[-] (7,3) -- (8,4);


\end{tikzpicture}

\caption{\ESOK{The planar} oclique $\vec{P}$ of order 15.}\label{fig planarmax}
\end{figure}
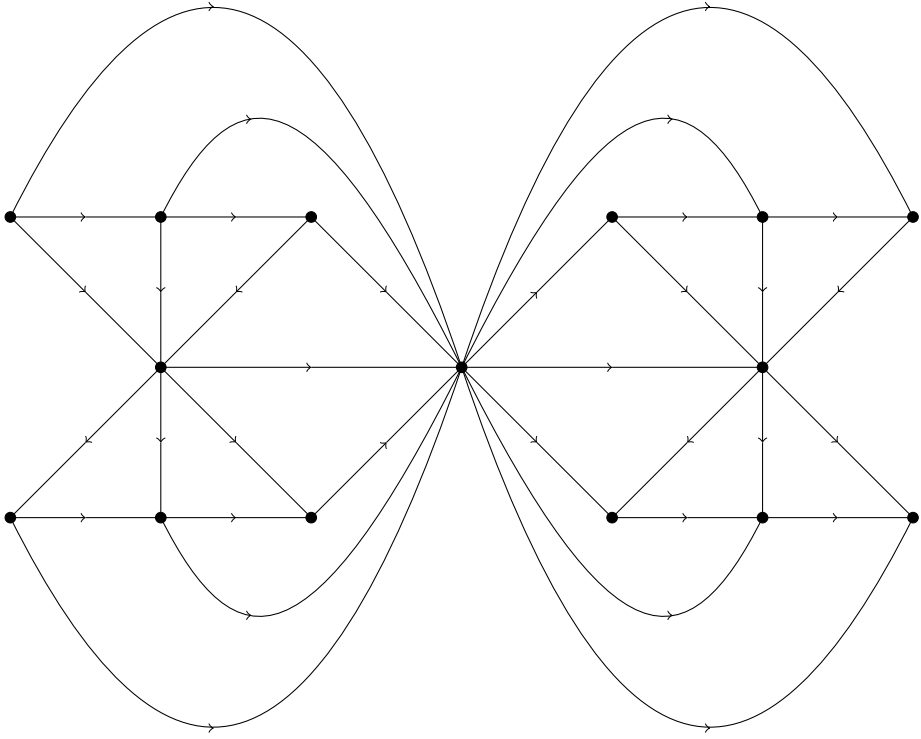

The question regarding the upper bound for \ESOK{the absolute oriented} clique number \ESOK{of the families} of  planar graphs with given girth (length of the smallest cycle in a graph)  is also of interest and was asked by  Klostermeyer and MacGillivray in~\cite{36}. 
We answer these questions and provide tight bounds.

Let $\mathcal{P}_k$ denote the family of planar graphs with girth at least $k$. \ESOK{We will prove the following.}

 \begin{theorem}\mbox{}\label{orientedplanarabsolute}

\begin{enumerate}[{\rm (a)}]
\item $\omega_{ao}( \mathcal{P}_3) = 15$.
\item $ \omega_{ao}( \mathcal{P}_4) = 6$.
\item $ \omega_{ao}( \mathcal{P}_5) = 5$.
\item $ \omega_{ao}( \mathcal{P}_k) = 3 $ for  $k \geq 6$.
\end{enumerate}
\end{theorem}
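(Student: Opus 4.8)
The plan is to obtain part~(a) directly from results already available, to prove the lower bounds in (b), (c), (d) by exhibiting small planar ocliques of the required girth, and to prove the matching upper bounds by a structural analysis of sparse planar ocliques, using a discharging argument for the two delicate cases.

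For (a), the inequality $\omega_{ao}(\mathcal{P}_3)\le 15$ is exactly the first assertion of Theorem~\ref{unique}, while the oriented graph $\overrightarrow{P}$ of Figure~\ref{fig planarmax} is a planar oclique on $15$ vertices, so $\omega_{ao}(\mathcal{P}_3)=15$. For the lower bounds I would use: the directed path $\overrightarrow{P_3}$ on three vertices, which is acyclic (hence of girth at least $k$ for every $k$), planar, and an oclique, giving $\omega_{ao}(\mathcal{P}_k)\ge 3$ for all $k$; the consistently oriented $5$-cycle $\overrightarrow{C_5}$, which is planar, has girth $5$, and is an oclique since every pair of its vertices is either adjacent or is the pair of terminal vertices of a $2$-dipath, giving $\omega_{ao}(\mathcal{P}_5)\ge 5$; and the complete bipartite graph $K_{2,4}$ with a suitable orientation, which is triangle-free, planar, of order $6$ (one checks that the two degree-$4$ vertices can be oriented so that, together, the induced $2$-dipaths link every one of the $\binom{6}{2}$ pairs of vertices), giving $\omega_{ao}(\mathcal{P}_4)\ge 6$. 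Each of these is a finite verification.

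For the upper bounds, let $\overrightarrow{G}$ be a planar oclique of order $n\ge 4$ whose underlying graph $G$ has girth at least $g$. By Euler's formula $|E(G)|\le \frac{g}{g-2}(n-2)$, and, because $\overrightarrow{G}$ is an oclique, every two non-adjacent vertices of $G$ have a common neighbour, while (for $g\ge 4$) triangle-freeness prevents adjacent vertices from having one. I would first show $\delta(G)\ge 2$: a vertex of degree $1$ forces its neighbour to be adjacent to every other vertex, whence $G=K_{1,n-1}$ (its neighbourhood being independent), and $K_{1,n-1}$ cannot be oriented as an oclique when $n\ge 4$ since the centre then has two in-neighbours or two out-neighbours, which are not linked by a $2$-dipath. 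When $g\ge 5$ the common neighbour of a non-adjacent pair is moreover unique ($C_4$-freeness), so for every vertex $v$ the ball of radius $2$ around $v$ — which is all of $V(G)$ by the oclique property — has size $1+d(v)+\sum_{u\sim v}(d(u)-1)$; summing over $v$ yields the identity $\sum_{v}d(v)^2=n(n-1)$. Together with $|E(G)|\le\frac{g}{g-2}(n-2)$ and $\delta(G)\ge 2$ this already rules out $n$ just above the claimed thresholds. The case $g\ge 6$ is then finished: $G$ is so sparse that it has a vertex $v$ of degree $2$, with neighbours $u_1,u_2$; writing $A=N(u_1)\setminus\{v\}$ and $B=N(u_2)\setminus\{v\}$, absence of $C_4$ gives $A\cap B=\emptyset$, the oclique property gives $V(G)=\{v,u_1,u_2\}\cup A\cup B$, and absence of $C_5$ forbids edges between $A$ and $B$; but then, as $n\ge 4$ at least one of $A,B$ is nonempty, say $A$, and a vertex of $A$ is neither adjacent to $u_2$ nor has a common neighbour with it, contradicting the oclique property. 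Hence $n\le 3$, proving~(d).

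The main obstacle is finishing (b) ($g=4$) and (c) ($g=5$) for \emph{all} large $n$, since the local counting above controls only bounded ranges and leaves open configurations with a few vertices of large degree. I would handle these by a discharging argument on $G$: give each vertex $v$ the initial charge $d(v)-\frac{2g}{g-2}$, so that the total charge is negative by Euler's formula, and redistribute it so that every final charge is nonnegative, yielding a contradiction. The rules should exploit the oclique structure — no two adjacent vertices share a neighbour, every non-adjacent pair shares exactly one for $g\ge 5$ — to send charge from high-degree vertices to their low-degree neighbours; controlling the interaction between the (necessarily few, by triangle-freeness and the identity $\sum_v d(v)^2=n(n-1)$) large-degree vertices, whose neighbourhoods are independent and pairwise share at most one vertex, is where the real work lies, and I expect the precise thresholds $6$ and $5$ to drop out of the tuning of the discharging rules.
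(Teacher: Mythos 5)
Your treatment of (a), of all the lower-bound constructions, and of case (d) is sound, and your argument for (d) (minimum degree at least $2$ via the orientation of a star, then a degree-$2$ vertex $v$ with $N(u_1)\setminus\{v\}$ and $N(u_2)\setminus\{v\}$ disjoint, non-adjacent and exhausting $V(G)$, contradicting distance at most $2$) is a genuinely different and more elementary route than the paper, which instead invokes Plesn\'ik's characterization of triangle-free planar graphs of diameter $2$ (Theorem~\ref{theorem plesnik}). However, the upper bounds in (b) and (c) — the heart of the theorem — are not proved: you state a discharging plan and explicitly defer the rules ("I expect the precise thresholds $6$ and $5$ to drop out of the tuning"), and the plan as described cannot succeed. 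All the structural inputs you feed into the discharging (planarity, girth, $\delta\ge 2$, "adjacent vertices share no neighbour, non-adjacent vertices share one") are properties of the \emph{underlying undirected} graph, and they are satisfied by $K_{2,n}$ for every $n$: it is planar, triangle-free, of diameter $2$, with $\delta=2$. So no redistribution of charge based on these facts can force $n\le 6$; the threshold $6$ exists only because of the orientation — in any orientation of $K_{2,n}$ the degree-$2$ vertices split into at most four classes according to their in/out relation to the two centres, and two vertices in the same class are neither adjacent nor joined by a $2$-dipath. Relatedly, your "identity" $\sum_v d(v)^2=n(n-1)$ for $g\ge 5$ is only the inequality $\sum_v d(v)^2\ge n(n-1)$, and it is satisfied by stars of arbitrary order, so it does not "rule out $n$ just above the claimed thresholds" as asserted.

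For comparison, the paper closes exactly this gap without any discharging: since an oclique has weak directed diameter at most $2$, its underlying graph has (undirected) diameter at most $2$, and Plesn\'ik's theorem says the triangle-free planar such graphs are precisely the stars, the graphs $K_{2,n}$, and the blow-ups of two non-adjacent vertices of $C_5$; the paper then exhibits, for each family, a fixed small oriented target (Figure~\ref{oPlesnik}) to which \emph{every} orientation maps homomorphically, with absolute oriented clique numbers $3$, $6$ and $5$ respectively, which yields (b), (c) and (d) at once. If you want to keep your self-contained approach, you must replace the undirected discharging by an orientation-sensitive argument (e.g., the four-class "agree/disagree" counting over a dominating pair of vertices), or else reduce to a finite list of underlying graphs as Plesn\'ik's theorem does; as it stands, the proof of (b) and (c) is missing its key idea.
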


In Section~\ref{sec preliminary} we fix the notation to be used in this article and state some useful results. 
We also define the relative oriented clique number of an oriented graph, which will be used later in a proof. 
In Section~\ref{sec outerall}, \ref{sec unique} and \ref{sec orientedplanarabsolute}  we prove Theorem~\ref{outerall}, \ref{unique} and \ref{orientedplanarabsolute}, respectively. Finally, 
we mention in Section~\ref{sec conclusion} some future directions for research on this topic.  

\section{Preliminaries}\label{sec preliminary}

For an oriented graph $\vec{G}$, every parameter we introduce below is denoted using $\vec{G}$ as a subscript. In order to simplify \ESOK{notation}, this subscript will be dropped whenever there is no chance of confusion.

The set of all adjacent vertices of a vertex $v$ in an oriented graph $ \overrightarrow{G}$ is called its set of \textit{neighbors} and is denoted by $N_{\overrightarrow{G}}(v)$. 
If  \ESOK{$ \overrightarrow{uv}$ is an arc}, then $u$ is an \textit{in-neighbor} of $v$ and $v$ is an \textit{out-neighbor} of $u$. 
The set of all in-neighbors and the set of all out-neighbors of $v$ are denoted by $N_{\overrightarrow{G}}^-(v)$ 
 and $N_{\overrightarrow{G}}^+(v)$, respectively. 
The \textit{degree} of a vertex $v$ in an oriented graph $ \overrightarrow{G}$, denoted by 
${\rm deg}_{\overrightarrow{G}}(v)$, is the number of neighbors of $v$ in $ \overrightarrow{G}$. Naturally, the  \textit{in-degree} (resp. \textit{out-degree}) of a vertex $v$ in an oriented graph $ \overrightarrow{G}$, denoted by 
${\rm deg}^-_{\overrightarrow{G}}(v)$ (resp. ${\rm deg}^+_{\overrightarrow{G}}(v)$), is the number of in-neighbors (resp. out-neighbors) of $v$ in $ \overrightarrow{G}$.
The \textit{order} $ |\overrightarrow{G}|$ of an oriented graph $ \overrightarrow{G}$ is the cardinality of its
set of vertices $V(\overrightarrow{G})$.

\ESOK{We say that two} vertices $u$ and $v$ of an oriented graph \textit{agree} on a third vertex $w$ of that graph if $w  \in N^\alpha(u) \cap N^\alpha(v)$ for some $ \alpha \in \{+,-\}$
\ESOK{and that they \textit{disagree} on $w$ if $w  \in N^\alpha(u) \cap N^\beta(v)$ for $ \{ \alpha, \beta \} = \{+,-\}$.}

A \textit{directed path} of length $k$, or  a \textit{$k$-dipath}, from $v_0$ to $v_k$ \ESOK{is} an oriented graph with vertices $v_0, v_1, ..., v_k$ and arcs $ \overrightarrow{v_0v_1}, \overrightarrow{v_1v_2}, ..., \overrightarrow{v_{k-1}v_k}$ where $v_0$ and $v_k$ are  the \textit{terminal} vertices and $v_1, ..., v_{k-1}$ are \textit{internal} vertices. 
 A 2-dipath with arcs $ \overrightarrow{uv}$ and $ \overrightarrow{vw}$ is denoted by $ \overrightarrow{uvw}$. 
 More generally, 
 a 2-dipath with terminal vertices $u,w$ and internal vertex $v$ is denoted by $uvw$ (this denotes either 
 the 2-dipath $\overrightarrow{uvw}$ or the 2-dipath $\overrightarrow{wvu}$).
 A \textit{directed cycle} of length $k$, or a \ESOK{\textit{directed $k$-cycle}}, is an oriented graph with vertices 
$v_1, v_2, ..., v_k$ and arcs $ \overrightarrow{v_1v_2}, \overrightarrow{v_2v_3}, ..., \overrightarrow{v_{k-1}v_k}$ and $ \overrightarrow{v_kv_1}$. 

The \textit{directed distance} $ \overrightarrow{d}_{\overrightarrow{G}}(u,v)$ 
between two vertices 
$u$ and $v$ of an oriented graph $ \overrightarrow{G}$  is the smallest length of a  directed path \ESOK{ from
 $u$ to $v$ in} $ \overrightarrow{G}$.  We let $\overrightarrow{d}_{\overrightarrow{G}}(u,v)=\infty$ if no such directed path exists.
 The \textit{weak directed distance} $ \overline{d}_{\overrightarrow{G}}(u,v)$ 
between 
$u$ and $v$ is then given by $ \overline{d}_{\overrightarrow{G}}(u,v) = \min\{\overrightarrow{d}_{\overrightarrow{G}}(u,v),\overrightarrow{d}_{\overrightarrow{G}}(v,u)\}$.

Let now $G$ be an undirected graph. 
A \textit{path} of length $k$, or  a \textit{$k$-path}, from $v_0$ to $v_k$ \ESOK{is} a graph with vertices $v_0, v_1, ..., v_k$ and edges 
$ \overrightarrow{v_0v_1}, \overrightarrow{v_1v_2}, ..., \overrightarrow{v_{k-1}v_k}$.
The \textit{distance} $d_G(x,y)$ 
between two vertices $x$ and $y$  of 
$G$ is the smallest length of a path connecting $x$ and $y$.
The \textit{diameter} $diam(G)$ of a graph $G$ is the maximum distance between pairs of vertices of the graph. 

Triangle-free graphs with diameter 2 have been characterized by Plesn\'ik in~\cite{plesnik}.

\begin{theorem}[Plesn\'ik, 1975]\label{theorem plesnik}
The triangle-free graphs with diameter 2 are precisely the graphs listed in Figure~\ref{Plesnik}.
\end{theorem}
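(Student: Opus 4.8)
The plan is to fix a triangle-free graph $G$ with $\operatorname{diam}(G)=2$ and split the analysis according to whether or not $G$ is bipartite. If $G$ is bipartite, with parts $A$ and $B$, then any two vertices in the same part are non-adjacent, hence at distance exactly $2$, so they have a common neighbour in the other part; and if some $a\in A$, $b\in B$ were non-adjacent, a path of length $2$ between them would have to use an edge inside $A$ or inside $B$, which is impossible. Hence every vertex of $A$ is joined to every vertex of $B$, so $G=K_{|A|,|B|}$, and requiring $\operatorname{diam}(G)=2$ merely rules out $K_{1,1}$. This produces exactly the complete bipartite graphs (in particular all stars $K_{1,n}$ with $n\ge 2$) appearing in Figure~\ref{Plesnik}.

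Assume now that $G$ is not bipartite, so it contains an odd cycle. First I would show that the shortest odd cycle of $G$ has length $5$: a triangle is forbidden, and a shortest odd cycle $w_1w_2\cdots w_{2k+1}$ with $k\ge 3$ is impossible because, using $\operatorname{diam}(G)=2$, there is a path of length at most $2$ between $w_1$ and $w_4$, and combining it with the sub-path $w_1w_2w_3w_4$ of the cycle yields, in every case, either a triangle or an odd cycle shorter than $2k+1$, contradicting minimality. Thus $G$ has an induced $5$-cycle $C=v_1v_2v_3v_4v_5$ (induced, since any chord of a $C_5$ creates a triangle). The heart of the proof is then to classify each vertex $u\notin V(C)$ by its trace $N(u)\cap V(C)$: triangle-freeness forces this trace to be an independent subset of $C$, i.e.\ $\varnothing$, a single vertex, or a pair $\{v_i,v_{i+2}\}$ of non-consecutive cycle vertices; and the constraints that $u$ lie at distance at most $2$ from every $v_j$, and that the $v_j$ remain at distance at most $2$ from one another and from every other external vertex, impose strong compatibility conditions between the traces of distinct external vertices.

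Carrying out this case analysis, I expect three regimes to emerge. First, all external vertices may attach to $C$ only as ``twins'' of cycle vertices (each external vertex having a two-element trace); then $G$ is a blow-up of $C_5$, each $v_j$ replaced by an independent set, which is one of the infinite families of Figure~\ref{Plesnik} (and includes $C_5$ itself). Second, several distinct traces may coexist only in a bounded number of small configurations, each checked by hand against the sporadic graphs of Figure~\ref{Plesnik} (such as the Wagner graph). Third, the attachments may be rich enough to force $G$ to be (close to) regular; here the Moore-type inequality $|V(G)|\le 1+\Delta+\Delta(\Delta-1)=\Delta^2+1$ — obtained by counting vertices at distance at most $2$ from a vertex of maximum degree $\Delta$ — bounds the order, a vertex of degree $1$ would make $G$ a star so $\delta(G)\ge 2$, and a refined count exploiting triangle-freeness isolates the extremal case $|V(G)|=\Delta^2+1$ as a Moore graph of girth $5$ (the $C_5$, the Petersen graph, the Hoffman--Singleton graph, and at most one further graph), recovering those entries of Figure~\ref{Plesnik} through the known uniqueness theorems. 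The main obstacle is precisely the interface between these regimes: organising the analysis of how external vertices may simultaneously attach to $C$ so that it closes up into exactly the graphs of Figure~\ref{Plesnik} with no stray configurations left over; the cleanest route I see is to deploy the Moore-bound inequality as early as possible to cap $\Delta$, thereby reducing everything outside the explicit infinite families to a finite, if tedious, verification.
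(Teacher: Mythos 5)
Your proposal never uses planarity, and that is a fatal gap. The result you are asked to prove is Plesn\'ik's characterization of \emph{planar} triangle-free graphs of diameter~2: the word ``planar'' is missing from the theorem statement as transcribed, but it appears in the caption of Figure~\ref{Plesnik} and in the way the result is applied later in the paper (the paper itself gives no proof, it only cites Plesn\'ik, so your argument stands or falls on its own). Without planarity the statement you set out to prove is simply false, and your own intermediate conclusions already exhibit this: in the bipartite case you correctly deduce that $G$ is complete bipartite, but then assert that all complete bipartite graphs ``appear in Figure~\ref{Plesnik}'' --- the figure contains only the stars and $K_{2,n}$, and $K_{3,3}$ (triangle-free, diameter 2, nonplanar) is excluded by nothing in your argument. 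In the non-bipartite case the graphs you expect to emerge --- arbitrary blow-ups of $C_5$, the Wagner graph, the Petersen graph, the Hoffman--Singleton graph --- are all triangle-free of diameter 2, yet none of them is in the figure (which allows blowing up only two \emph{non-adjacent} vertices of the $5$-cycle). So the plan is internally inconsistent: the class of all triangle-free diameter-2 graphs is far richer than the three listed families, and no Moore-bound computation or finite case-check can collapse it onto them.

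In addition, the non-bipartite half is only a declaration of intent (``I expect three regimes to emerge''); the decisive analysis of how the traces $N(u)\cap V(C)$ of external vertices interact is exactly the part left undone, and it cannot be completed as stated. A correct proof must invoke planarity at every stage: in the bipartite case, forbidding a $K_{3,3}$ subgraph cuts the complete bipartite graphs down to $K_{1,n}$ and $K_{2,n}$; in the non-bipartite case, after extracting an induced $C_5$, planarity (via forbidden $K_{3,3}$ subdivisions, or the edge bound $m\le 2n-4$ for triangle-free planar graphs) combined with the distance-2 conditions forces every external vertex to be a false twin of one of two fixed non-adjacent cycle vertices. A concrete illustration of what your outline cannot exclude: adding one copy of each of two \emph{adjacent} vertices of $C_5$ gives a triangle-free graph of diameter 2 on 7 vertices which contains a $K_{3,3}$ subdivision, hence is nonplanar and rightly absent from Figure~\ref{Plesnik}; only the planarity hypothesis, absent from your proposal, rules it out.
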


The graphs depicted in Figure~\ref{Plesnik} are the stars, the complete bipartite graphs  $K_{2,n}$
for some natural number $n$, and the graph obtained by adding copies of two non-adjacent vertices
of the 5-cycle.

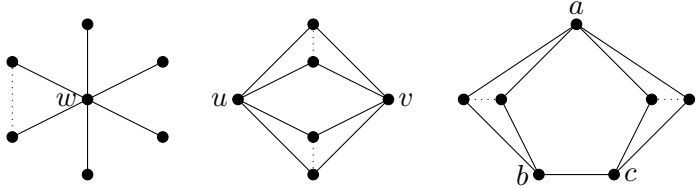
\begin{figure}[t]

\centering
\begin{tikzpicture}

\filldraw [black] (0,0) circle (2pt) {node[left]{}};
\filldraw [black] (0,1) circle (2pt) {node[left]{$w$}};
\filldraw [black] (0,2) circle (2pt) {node[right]{}};
\filldraw [black] (-1,.5) circle (2pt) {node[right]{}};
\filldraw [black] (-1,1.5) circle (2pt) {node[right]{}};
\filldraw [black] (1,.5) circle (2pt) {node[right]{}};
\filldraw [black] (1,1.5) circle (2pt) {node[right]{}};

\draw[dotted] (-1,1.5) -- (-1,.5);

\draw[-] (0,0) -- (0,1);
\draw[-] (0,2) -- (0,1);

\draw[-] (0,1) -- (-1,1.5);
\draw[-] (-1,.5) -- (0,1);

\draw[-] (0,1) -- (1,1.5);
\draw[-] (1,.5) -- (0,1);

\filldraw [black] (3,0) circle (2pt) {node[left]{}};
\filldraw [black] (3,.5) circle (2pt) {node[right]{}};
\filldraw [black] (3,1.5) circle (2pt) {node[right]{}};
\filldraw [black] (3,2) circle (2pt) {node[left]{}};

\filldraw [black] (2,1) circle (2pt) {node[left]{$u$}};
\filldraw [black] (4,1) circle (2pt) {node[right]{$v$}};

\draw[dotted] (3,0) -- (3,.5);
\draw[dotted] (3,2) -- (3,1.5);

\draw[-] (2,1) -- (3,2);

\draw[-] (2,1) -- (3,1.5);

\draw[-] (3,.5) -- (2,1);

\draw[-] (3,0) -- (2,1);

\draw[-] (3,2) -- (4,1);

\draw[-] (4,1) -- (3,1.5);

\draw[-] (4,1) -- (3,.5);

\draw[-] (3,0) -- (4,1);

 \filldraw [black] (6,0) circle (2pt) {node[left]{$b$}};
\filldraw [black] (7,0) circle (2pt) {node[right]{$c$}};

\filldraw [black] (5.5,1) circle (2pt) {node[left]{}};
\filldraw [black] (7.5,1) circle (2pt) {node[right]{}};

\filldraw [black] (5,1) circle (2pt) {node[left]{}};
\filldraw [black] (8,1) circle (2pt) {node[right]{}};

\filldraw [black] (6.5,2) circle (2pt) {node[above]{$a$}};

\draw[dotted] (5.5,1) -- (5,1);
\draw[dotted] (7.5,1) -- (8,1);

\draw[-] (6,0) -- (7,0);

\draw[-] (7,0) -- (7.5,1);

\draw[-] (7,0) -- (8,1);

\draw[-] (7.5,1) -- (6.5,2);

\draw[-] (8,1) -- (6.5,2);

\draw[-] (6.5,2) -- (5.5,1);

\draw[-] (6.5,2) -- (5,1);

\draw[-] (5.5,1) -- (6,0);

\draw[-] (5,1) -- (6,0);

\end{tikzpicture}

\caption{List of all triangle-free planar graphs with  diameter 2 (Plesn\'ik (1975)).}\label{Plesnik}

\end{figure}

A vertex subset $D$ is a \textit{dominating} set of a graph $G$ if every vertex of $G$ is either in $D$ or adjacent to a vertex of $D$.
The \textit{domination number} $ \gamma(G)$ of a graph $G$ is the minimum cardinality of a dominating set of $G$. 
  
We now define a new parameter for oriented graphs which will be used in our proof.
A \textit{\ESOK{relative oriented} clique}  of an oriented graph $\overrightarrow{G}$ is a set $R \subseteq V(\overrightarrow{G})$ of vertices 
such that any two vertices from $R$ are  at weak directed distance at most 2 in $ \overrightarrow{G}$. 
The 
\textit{\ESOK{relative oriented} clique number} $\omega_{ro}(\overrightarrow{G})$ of an oriented graph $ \overrightarrow{G}$ is the maximum order 
of a \ESOK{relative oriented} clique  of 
 $ \overrightarrow{G}$. 
 
 The  \ESOK{relative oriented} clique number $\omega_{ro}(G)$  of  a simple graph 
$ G$  is the maximum of the  \ESOK{relative oriented} clique numbers of all the oriented graphs with underlying graph  $G$. 
The \ESOK{relative oriented} clique number $\omega_{ro}(\mathcal{F})$ of  a family 
$ \mathcal{F}$ of graphs is the maximum of the  \ESOK{relative oriented} clique numbers of the graphs from the family $\mathcal{F}$.

 From the definitions,  we clearly have the following extension of Lemma~\ref{lem basic-in}.

\begin{lemma}\label{lem basic-in2}
For any oriented graph $\overrightarrow{G}$, 
$ \omega_{ao}(\overrightarrow{G})  \leq  \omega_{ro}(\overrightarrow{G})  \leq \chi_o(\overrightarrow{G})$.
\end{lemma}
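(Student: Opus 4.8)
The plan is to prove the two inequalities separately, each by an elementary argument based on the characterizations already recorded in the excerpt; Lemma~\ref{lem basic-in} is then an immediate consequence of the chain. For the first inequality $\omega_{ao}(\overrightarrow{G}) \le \omega_{ro}(\overrightarrow{G})$, I would pick an oclique $\overrightarrow{H}$ contained in $\overrightarrow{G}$ as a subgraph with $|V(\overrightarrow{H})| = \omega_{ao}(\overrightarrow{G})$. By the characterization of ocliques noted right after their definition, any two distinct vertices of $\overrightarrow{H}$ are at weak directed distance at most $2$ \emph{within} $\overrightarrow{H}$. Since every arc of $\overrightarrow{H}$ is also an arc of $\overrightarrow{G}$, a $2$-dipath of $\overrightarrow{H}$ is also a $2$-dipath of $\overrightarrow{G}$, so those two vertices remain at weak directed distance at most $2$ in $\overrightarrow{G}$. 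Hence $V(\overrightarrow{H})$ is a relative oriented clique of $\overrightarrow{G}$, giving $\omega_{ro}(\overrightarrow{G}) \ge |V(\overrightarrow{H})| = \omega_{ao}(\overrightarrow{G})$.

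For the second inequality $\omega_{ro}(\overrightarrow{G}) \le \chi_o(\overrightarrow{G})$, I would fix an optimal oriented coloring $\phi$ of $\overrightarrow{G}$ with $k = \chi_o(\overrightarrow{G})$ colors, together with a relative oriented clique $R$ of $\overrightarrow{G}$ with $|R| = \omega_{ro}(\overrightarrow{G})$. The key point is that $\phi$ is injective on $R$. Indeed, let $u,v \in R$ be distinct; they are at weak directed distance at most $2$, so either they are adjacent, whence $\phi(u)\neq\phi(v)$ by condition $(i)$, or they are the terminal vertices of a $2$-dipath, say with arcs $\overrightarrow{uw}$ and $\overrightarrow{wv}$ (the case with arcs $\overrightarrow{vw}$ and $\overrightarrow{wu}$ being symmetric). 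Applying condition $(ii)$ to the ordered pair of arcs $\overrightarrow{uw},\overrightarrow{wv}$, we see that $\phi(u)=\phi(v)$ would force $\phi(w)\neq\phi(w)$, a contradiction; hence $\phi(u)\neq\phi(v)$. Thus $\phi$ restricted to $R$ is injective, so $|R|\le k$, i.e. $\omega_{ro}(\overrightarrow{G}) \le \chi_o(\overrightarrow{G})$.

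There is no genuine obstacle here, as the statement follows directly from the definitions; the only small point to watch is that $\overrightarrow{H}$ is only required to be a subgraph (not an induced subgraph) of $\overrightarrow{G}$, but this still yields the weak-directed-distance bound in the direction we need, since adding vertices and arcs can only shorten directed paths. Passing from an oriented graph to its underlying undirected graph, and then from a graph to a family, the same chain of inequalities holds by taking maxima, so the lemma extends verbatim to $\omega_{ro}(G)$, $\omega_{ro}(\mathcal{F})$, etc.
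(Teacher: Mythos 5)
Your proof is correct and takes essentially the same route as the paper, which states this lemma without proof as an immediate consequence of the definitions. Your two steps (the vertex set of a maximum oclique subgraph is a relative oriented clique of $\overrightarrow{G}$, since its arcs and hence its 2-dipaths survive in $\overrightarrow{G}$; and any oriented coloring is injective on a relative oriented clique because adjacent vertices and terminal vertices of a 2-dipath must receive distinct colors) simply spell out the details the paper leaves implicit.
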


 \ESOK{The} \ESOK{relative oriented} clique number of outerplanar graphs is at most 7 and this bound is tight.
 
 \begin{theorem}\label{theoremouterrelative}
 \ESOK{Let $\mathcal{O}$ be the family of outerplanar graphs. Then,} $\omega_{ro}(\mathcal{O}) = 7$.
 \end{theorem}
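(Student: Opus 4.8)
The plan is to obtain Theorem~\ref{theoremouterrelative} as a short corollary of results already at hand, establishing the two inequalities $\omega_{ro}(\mathcal{O}) \le 7$ and $\omega_{ro}(\mathcal{O}) \ge 7$ separately. For the upper bound I would simply chain inequalities: by Lemma~\ref{lem basic-in2}, every orientation $\overrightarrow{G}$ of an outerplanar graph $G$ satisfies $\omega_{ro}(\overrightarrow{G}) \le \chi_o(\overrightarrow{G})$; by definition $\chi_o(\overrightarrow{G}) \le \chi_o(G)$; and by Theorem~\ref{orientedouterplanar}, $\chi_o(G) \le 7$. Hence $\omega_{ro}(\overrightarrow{G}) \le 7$ for every such $\overrightarrow{G}$, and taking the maximum over all orientations of all outerplanar graphs gives $\omega_{ro}(\mathcal{O}) \le 7$. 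The one step worth spelling out is the inequality $\omega_{ro} \le \chi_o$ itself: if $R$ is a relative oriented clique and $u,v \in R$ are joined by a $2$-dipath $\overrightarrow{uwv}$, then condition $(ii)$ in the definition of an oriented coloring, applied to the arcs $\overrightarrow{uw}$ and $\overrightarrow{wv}$, already forbids $\phi(u)=\phi(v)$; so $R$ must be rainbow-colored in any oriented coloring, whence $|R| \le \chi_o(\overrightarrow{G})$.

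For the lower bound I would exhibit a single witness: the oriented outerplanar graph $\overrightarrow{O}$ of Figure~\ref{fig outerplanarmax}, which is an oclique of order $7$ (Theorem~\ref{gary-outer}). As recalled in the introduction, an oclique is exactly an oriented graph all of whose pairs of distinct vertices are at weak directed distance at most $2$; thus $V(\overrightarrow{O})$ is itself a relative oriented clique of $\overrightarrow{O}$, so $\omega_{ro}(\overrightarrow{O}) = 7$, and consequently $\omega_{ro}(\mathcal{O}) \ge \omega_{ro}(\overrightarrow{O}) = 7$. (Equivalently, one may invoke the inequality $\omega_{ao} \le \omega_{ro}$ from Lemma~\ref{lem basic-in2} together with $\omega_{ao}(\overrightarrow{O}) = 7$.) Combining the two bounds yields $\omega_{ro}(\mathcal{O}) = 7$.

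Since the argument rests entirely on Theorem~\ref{orientedouterplanar}, Lemma~\ref{lem basic-in2}, and the existence of $\overrightarrow{O}$, there is no genuine obstacle to overcome here; the heavy lifting was already done in establishing $\chi_o(\mathcal{O})=7$. Were that bound unavailable, one would instead have to argue structurally --- for instance, peeling off a vertex of degree at most $2$ and bounding, for each potential internal vertex of a $2$-dipath, the number of pairs of a relative oriented clique it can witness, using that an outerplanar graph contains no $K_4$ and no $K_{2,3}$ (so any two vertices have at most two common neighbors, and the neighborhood of any vertex induces a linear forest) --- but for Theorem~\ref{theoremouterrelative} this detour is not needed.
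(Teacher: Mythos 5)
Your argument is correct and is essentially the paper's own proof: the upper bound follows from $\omega_{ro}\leq\chi_o$ (Lemma~\ref{lem basic-in2}) together with Theorem~\ref{orientedouterplanar}, and the lower bound from the outerplanar oclique $\overrightarrow{O}$ of order $7$ in Figure~\ref{fig outerplanarmax}. The extra details you supply (why a relative oriented clique must be rainbow, and the fallback structural argument) are fine but not needed beyond what the paper already invokes.
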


\begin{proof}
Note that the oriented outerplanar graph depicted in Figure~\ref{fig outerplanarmax} is an oclique. Hence, by Theorem~\ref{orientedouterplanar}
and Lemma~\ref{lem basic-in2} the result follows.
\end{proof}

\section{Proof of Theorem~\ref{outerall}}\label{sec outerall}

 It is easy to verify that  each graph 
 in Figure~\ref{fig outerplanarocliques} is outerplanar and can be oriented as an oclique. 
 If $G$ is an outerplanar graph \ESOK{which} contains any 
of the graphs from 
Figure~\ref{fig outerplanarocliques} as a spanning subgraph, then we can orient the edges of that spanning subgraph 
 to obtain an oclique and orient all the other edges arbitrarily. Note that with such an orientation $G$ is an outerplanar oclique.
 This proves the ``if'' part.
 
 \medskip

For the ``only if'' part, first note that a graph cannot be oriented as an oclique if it is disconnected. Now there are only two connected graphs with \ESOK{at most} two vertices, namely, the complete graphs $K_1$ (single vertex) and $K_2$ (an edge). Both of them are outerplanar and can be oriented as ocliques. Hence, any outerplanar graph on at most two vertices that can be oriented as an oclique must contain one of the graphs depicted in   Figure~\ref{fig outerplanarocliques}($a$) and Figure~\ref{fig outerplanarocliques}($b$) as a spanning subgraph.
  
\medskip  
  
If $G$ is connected  and has 3 vertices it must contain a 2-path as a spanning subgraph. We know that a 2-path is outerplanar and can be oriented as 
an oclique. Hence, any outerplanar graph on three vertices that can be oriented as an oclique must contain a 2-path (Figure~\ref{fig outerplanarocliques}($c$)) as a spanning subgraph.

\medskip

If $G$ has  at least 4 vertices and  is a  minimal (with respect to spanning subgraph inclusion) outerplanar graph that can be oriented as an oclique, 
then $ \Delta(G) \ge 2$ as every oriented tree is  3-colorable. Now we will do a case analysis to prove the remaining part. 

For the remainder of the proof assume that $V(G) = \{v_1, v_2, v_3, ..., v_{|G|}\}$ where $G$ is a  minimal (with respect to spanning subgraph inclusion) outerplanar graph of order at least 4 that can be oriented as an oclique.

\begin{enumerate}[(i)]

\item \textbf{For $|G| = 4$ and $\Delta(G) = 2$:} A triangle would force one vertex to have degree zero and hence $G$ can not be oriented as an oclique. So, $G$ must contain
a 4-cycle (Figure~\ref{fig outerplanarocliques}($d$)). 

\item  \textbf{For $|G| = 4$ and $\Delta(G) = 3$:} $G$ can not be $K_{1,3}$ since it is  3-colorable (as it is a tree). So we need to add at least one more edge to it. By adding one more edge to it, without loss of generality, we obtain the graph depicted in Figure~\ref{fig outerplanarocliques}($e$).

\item \textbf{For $|G| = 5$ and $\Delta(G) = 2$:}  $G$ must contain a $C_5$ (Figure~\ref{fig outerplanarocliques}($e$)) as any other connected graph on five vertices \ESOK{is either} a tree or has maximum degree \ESOK{greater} than 2.

\item \textbf{For $|G| = 5$ and $\Delta(G) = 3$:} Without loss of generality assume that ${\rm deg}(v_1)=3$ and $N(v_1) = \{v_2,v_3,v_4 \}$. 
If $|N(v_1) \cap N(v_5)| < 2$, then there cannot be a 2-dipath between $v_5$ and a vertex from $N(v_1) \setminus N(v_5)$. 
If $|N(v_1) \cap N(v_5)| > 2$, that is,  $N(v_1) =  N(v_5)$, then $G$ will contain $K_{2,3}$ as a subgraph which contradicts the \ESOK{outerplanarity} of $G$. 
Hence we must have $|N(v_1) \cap N(v_5)| = 2$. 

Without loss of generality assume that $N(v_5) = \{v_2,v_3\}$. 
Since $\Delta(G) =3$, $v_4$ must be adjacent to either $v_2$ or $v_3$ to allow a 2-dipath between $v_4$ and $v_5$.  But then $C_5$ is a spanning subgraph of $G$ contradicting the minimality of $G$.

\item \textbf{For $|G| = 5$ and $\Delta(G) = 4$:} First note that $G$ cannot have four edges as then it will be a tree and hence  3-colorable.  
Assume first that  $G$ has five edges. 
Without loss of generality assume that $N(v_1) = \{v_2, v_3, v_4, v_5\}$ and \ESOK{that} the fifth edge in $G$ \ESOK{is} $v_2v_3$. 
Now assume the arc  $\overrightarrow{v_2v_1}$ without loss of generality. 
This implies the arcs $\overrightarrow{v_1v_4}$, $\overrightarrow{v_1v_5}$  to have a 2-dipath between the pair of vertices $\{v_2, v_4\}$ and 
$\{v_2, v_5\}$ respectively. 
 But then it is not possible to connect $v_4$ and $v_5$ by a 2-dipath. Therefore, $G$ has at least six edges.
If $G$ has at least  six edges, then $G$ will contain  \ESOK{the graph depicted either in}  Figure~\ref{fig outerplanarocliques}($g$) or \ESOK{in}
 Figure~\ref{fig outerplanarocliques}($h$) as a spanning subgraph.

\item \textbf{For $|G| = 6$ and $\Delta(G) = 2$:} The only two connected \ESOK{graphs} on six vertices with $\Delta(G) = 2$ 
are the cycle $C_6$ on six vertices  and the path on six vertices. \ESOK{None} of them can be oriented as an oclique as 
the oriented chromatic number of trees and cycles \ESOK{is} bounded above by  3 and 5, respectively.

\item \textbf{For $|G| = 6$ and $\Delta(G) = 3$:}  Let $N(v_1) = \{v_2, v_3, v_4\}$. 
Since $G$ is 
an outerplanar graph we have $|N(v_1) \cap N(v_i)| \leq 2$ for $i \in \{5,6\}$. 
Now let $|N(v_1) \cap N(v_5)| = 0$. Then $v_2, v_3$ and $v_4$ must be connected by 2-dipaths to $v_5$ through $v_6$ contradicting 
$|N(v_1) \cap N(v_6)| \leq 2$. 
Hence $ |N(v_1) \cap N(v_5)| \in \{ 1, 2\}$.

First assume that  $|N(v_1) \cap N(v_5)| = 1$ and without loss of generality let $N(v_1) \cap N(v_5) = \{v_2\}$. 
Since $\Delta(G) = 3$, $v_2$ can be adjacent to at most one of the vertices from $\{v_3, v_4\}$. 
If $v_2$ is adjacent to exactly one vertex from $\{v_3,v_4\}$, say $v_3$ without loss of generality, then $v_4$ must be connected to $v_5$
by a 2-dipath through $v_6$. Now for having weak directed distance at most 2 between $v_3$ and $v_6$ we must either 
have the edge $v_3v_4$
or have the edge $v_3v_6$ 
(it is not possible to have the edge $v_2v_6$ as $\Delta(G) = 3$) 
creating a $K_4$-minor or a $K_{2,3}$ respectively, contradicting the outerplanarity of $G$. Hence,
$v_2$ is non-adjacent to both $v_3$ and $v_4$. In that case, $v_5$ must be connected to $v_3$ and $v_4$ by 2-dipaths through $v_6$. This will create a $K_{2,3}$-minor in $G$ contradicting its outerplanarity.

Now assume that $|N(v_1) \cap N(v_5)| = 2$ and let $N(v_1) \cap N(v_5) = \{v_2, v_3\}$. 
Since $G$ is outerplanar, $v_4$ cannot be connected to $v_5$ by a 2-dipath through $v_6$ as that will create a $K_{2,3}$-minor. 
So $v_4$ must be connected to $v_5$ by a  2-dipath through either $v_2$ or $v_3$. 
Note that $v_4$ cannot be adjacent to both $v_2$ and $v_3$ as it will create a $K_4$-minor contradicting the outerplanarity of $G$. 
Without loss of generality assume that $v_4$ is connected to $v_5$ by a 2-dipath through $v_3$. 
Now $v_6$ cannot be adjacent to $v_3$ as $\Delta(G) = 3$. 
To have weak directed distance from $v_6$ to $v_2$ and $v_4$ at most 2 we must have the edges $v_4v_6$ and $v_5v_6$. This will create 
a $K_{2,3}$-minor contradicting the outerplanarity of $G$. Hence we are done with this case as well.

\item \textbf{For $|G| = 6$ and $\Delta(G) = 4$:} 
Let $N(v_1) = \{v_2, v_3, v_4, v_5\}$. Then we have $|N(v_1) \cap N(v_6)| \le 2$ since $G$ is outerplanar. 
Let $N(v_1) \cap N(v_6) = N(v_6) = \{v_2\}$. 
Then $v_2$ can be a \ESOK{neighbor} of at most two vertices from $\{v_3,v_4,v_5\}$ in order to preserve outerplanarity of $G$ and hence the remaining vertex will not have any 2-dipath connecting it to $v_6$. 
So we must have $|N(v_6)| = 2$. 
Without loss of generality assume that $N(v_6) = \{v_3, v_4\}$. 
Now $v_6$ must be connected by 2-dipaths to $v_2$ and $v_5$. 
\ESOK{These} two 2-dipaths must go through $v_3$ or $v_4$. Both \ESOK{2-dipaths} cannot go through the same vertex $v_3$ (or $v_4$) as it 
will create 
a $K_{2,3}$-minor contradicting the outerplanarity of $G$. Therefore, one of the two 2-dipaths must go through  $v_3$ while the other must go through $v_4$. This will force the graph depicted in Figure~\ref{fig outerplanarocliques}($i$) to be a spanning subgraph of $G$ contradicting its minimality.

\item \textbf{For $|G| = 6$ and $\Delta(G) = 5$:} Assume that $N(v_1) = \{v_2,v_3, v_4, v_5, v_6\}$ and let the vertices $v_2,v_3, v_4, v_5$  and $v_6$
be arranged in \ESOK{clockwise order} around $v_1$ in a fixed planar embedding of $G$. 
\ESOK{Since $G$ is  outerplanar}, the \ESOK{induced subgraph} $G[v_2,v_3, v_4, v_5, v_6]$ \ESOK{cannot} have a cycle as it 
will create 
a \ESOK{$K_4$-minor,} contradicting the outerplanarity of $G$.
 
 Now without loss of generality assume that $|N^+(v_1)| > |N^-(v_1)|$. Note that if $|N^+(v_1)| \ge 4$ then we cannot have weak directed distance 
 \ESOK{at most}  2 between all the vertices of $N^+(v_1)$ keeping the graph outerplanar. Hence we must have $|N^+(v_1)| =3$ and $|N^-(v_1)| = 2$. 
 Now to have weak directed distance at most 2 between all the vertices of $N^+(v_1)$ and  between all the vertices of $N^-(v_1)$, keeping the graph 
 outerplanar, we must have 
  the graph depicted in Figure~\ref{fig outerplanarocliques}($j$) as a spanning subgraph of $G$.

\item \textbf{For $|G| = 7$:} It has been proved by Klostermeyer and MacGillivray in~\cite{36}  that $G$ must contain 
\ESOK{the graph depicted in} Figure~\ref{fig outerplanarocliques}($k$) as a spanning subgraph.
\end{enumerate} 

This concludes the proof. \hfill $\square$

 \section{Proof of Theorem~\ref{unique}}\label{sec unique}

Goddard and Henning~\cite{dom} proved that every planar graph of diameter 2 has domination number at most 2 except for
a particular  graph on nine vertices. 

Let $\overrightarrow{B}$ be a planar oclique  dominated by the vertex $v$. Sopena~\cite{ocoloring} showed that any oriented outerplanar graph has an oriented 
$7$-coloring (see Theorem~\ref{orientedouterplanar}). Hence let $c$ be an oriented $7$-coloring of the oriented outerplanar graph obtained from $\overrightarrow{B}$ by deleting the vertex $v$. For $u \in N^\alpha(v)$ let us assign the color $(c(u), \alpha)$ to  $u$ for $\alpha \in \{+,- \}$ and the color $0$ to $v$. It is easy to check that this is an oriented 
$15$-coloring of $\overrightarrow{B}$. Hence any planar oclique dominated by one vertex has order at most 15.

\begin{lemma}
Let $ \overrightarrow{H}$ be a planar oclique of order 15 dominated by one vertex.  Then $ \overrightarrow{H}$ contains the planar oclique  depicted in Figure~\ref{fig planarmax} as a \ESOK{spanning} subgraph. 
\end{lemma}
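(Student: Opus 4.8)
The plan is to remove the dominating vertex $v$ from $\overrightarrow{H}$, observe that what remains is outerplanar, show that the in‑ and out‑neighbourhoods of $v$ each induce an oclique of order $7$ inside $\overrightarrow{H}$, and finally invoke Theorem~\ref{gary-outer} to recognise both of these ocliques as copies of $\overrightarrow{O}$, which glued at $v$ reproduce $\overrightarrow{P}$.

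First I would fix a plane embedding of $\overrightarrow{H}$. Since $v$ is adjacent to every other vertex, each of the $14$ remaining vertices lies on the boundary of the face of $\overrightarrow{H}-v$ that contained $v$, so $\overrightarrow{H}-v$ is outerplanar, and the clockwise order $w_1,\dots,w_{14}$ of these vertices along that face is exactly the rotation of $v$. Because no two out‑neighbours of $v$ (nor two in‑neighbours of $v$) can be joined by a $2$-dipath whose internal vertex is $v$, every pair of vertices of $N^{+}(v)$, and every pair of $N^{-}(v)$, is at weak directed distance at most $2$ already in $\overrightarrow{H}-v$; hence $N^{+}(v)$ and $N^{-}(v)$ are relative oriented cliques of the outerplanar oriented graph $\overrightarrow{H}-v$. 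By Theorem~\ref{theoremouterrelative} each of them has size at most $7$, and since together they exhaust the $14$ vertices of $\overrightarrow{H}-v$ we get $|N^{+}(v)|=|N^{-}(v)|=7$.

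The crux is to prove that $\overrightarrow{H}[N^{+}(v)]$ (and, symmetrically, $\overrightarrow{H}[N^{-}(v)]$) is itself an oclique. Suppose not: then there are $w_i,w_j\in N^{+}(v)$ that are non‑adjacent and such that no vertex of $N^{+}(v)$ is the internal vertex of a $2$-dipath joining them, so every $2$-dipath $w_iw_kw_j$ realising $\overline d_{\overrightarrow H}(w_i,w_j)\le 2$ (and at least one exists) has $w_k\in N^{-}(v)$, giving non‑crossing chords $w_iw_k$ and $w_kw_j$ in the outerplanar embedding. I would then analyse the cyclic positions of $w_i,w_j,w_k$ together with the other five vertices of $N^{+}(v)$ and the seven vertices of $N^{-}(v)$: each remaining pair inside $N^{+}(v)$, and each pair inside $N^{-}(v)$, must again be adjacent or joined by a $2$-dipath whose two chords avoid crossing $w_iw_k$, $w_kw_j$ and each other. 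The aim is to force a $K_4$- or $K_{2,3}$-minor in $\overrightarrow{H}-v$, contradicting its outerplanarity --- the same kind of contradiction used throughout the proof of Theorem~\ref{outerall}. A useful intermediate step to isolate here is that $N^{+}(v)$ and $N^{-}(v)$ occur as two contiguous arcs in the cyclic order $w_1,\dots,w_{14}$; once each part is an interval, the chords through $N^{-}(v)$ that could connect a pair of $N^{+}(v)$ are tightly constrained and the forbidden minors should come out cleanly.

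Finally, with $\overrightarrow{H}[N^{+}(v)]$ and $\overrightarrow{H}[N^{-}(v)]$ known to be ocliques of order $7$, Theorem~\ref{gary-outer} shows that each contains the outerplanar oclique $\overrightarrow{O}$ of Figure~\ref{fig outerplanarmax} as a spanning subgraph. Every arc incident with $v$ goes from a vertex of $N^{-}(v)$ to $v$ or from $v$ to a vertex of $N^{+}(v)$; so, taking $v$ as the central vertex of $\overrightarrow{P}$, the copy of $\overrightarrow{O}$ in $\overrightarrow{H}[N^{-}(v)]$ as the part all of whose arcs enter the centre, and the copy of $\overrightarrow{O}$ in $\overrightarrow{H}[N^{+}(v)]$ as the part all of whose arcs leave the centre, we recover precisely the oriented graph $\overrightarrow{P}$ of Figure~\ref{fig planarmax} as a spanning subgraph of $\overrightarrow{H}$. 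I expect the third step --- the planarity case analysis forcing each part to induce an oclique --- to be the main obstacle; the remaining steps are essentially bookkeeping.
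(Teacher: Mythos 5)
Your outline agrees with the paper up to the point where $|N^{+}(v)|=|N^{-}(v)|=7$ is established (the paper argues exactly as you do, via Theorem~\ref{theoremouterrelative} applied to the outerplanar graph $\overrightarrow{H}[N(v)]$), and your final assembly step would indeed be fine, since each half of $\overrightarrow{P}$ minus its centre is a copy of $\overrightarrow{O}$. The genuine gap is the step you yourself flag as the crux: the claim that $\overrightarrow{H}[N^{+}(v)]$ and $\overrightarrow{H}[N^{-}(v)]$ are themselves ocliques. What is available at that point is only that each of $N^{+}(v)$, $N^{-}(v)$ is a \emph{relative} oriented clique of $\overrightarrow{H}-v$, and the connecting $2$-dipaths may well have their internal vertex in the opposite half. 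Your proposed route to a contradiction is local: a single non-adjacent pair $w_i,w_j\in N^{+}(v)$ joined only through some $w_k\in N^{-}(v)$ should force a $K_4$- or $K_{2,3}$-minor. This cannot work as stated -- two non-crossing chords $w_iw_k$, $w_kw_j$ in an outerplanar graph create no forbidden minor at all; any contradiction must come from globally accounting for \emph{all} $\binom{7}{2}+\binom{7}{2}$ required connections simultaneously, which is precisely the hard case analysis, and it is left in your write-up as an expectation ("should come out cleanly") rather than an argument. The auxiliary claim that $N^{+}(v)$ and $N^{-}(v)$ occupy two contiguous arcs of the rotation around $v$ is likewise asserted, not proved; it is true only a posteriori, once the structure of $\overrightarrow{P}$ is known.

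For comparison, the paper closes this gap by a different and more economical device: assuming $\overrightarrow{H}$ triangulated, the $14$ neighbours form a cycle $x_1x_2\dots x_{14}$, and outerplanarity of $\overrightarrow{H}[N(v)]$ yields a vertex $x_2$ of degree exactly $2$ there, with the edge $x_1x_3$ forced. Every vertex of $N^{\alpha}(v)\setminus\{x_1,x_2,x_3\}$ (where $x_2\in N^{\alpha}(v)$) must reach $x_2$ by a $2$-dipath through $x_1$ or $x_3$; counting shows at most three can use $x_1$, at most three can use $x_3$, and planarity forbids having two on each side, which forces $x_1,x_3\in N^{\alpha}(v)$ and a $3{+}1$ split. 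From this the orientation pattern (the dipaths $x_ix_jx_k$ and $x_2x_3x_l$, and the disagreements on $x_1$) is derived directly, giving the half-structure of $\overrightarrow{P}$ inside $\overrightarrow{H}[N^{\alpha}(v)]$; the second half follows because the remaining degree-$2$ vertex of $\overrightarrow{H}[N(v)]$ must lie in $N^{\overline{\alpha}}(v)$. The paper also addresses a wrinkle your outline would inherit if you triangulated: one must check that the copy of $\overrightarrow{P}$ found uses only arcs of the original oclique. So your first and last steps are essentially the paper's, but the middle -- which is the substance of the lemma -- is missing, and the local forbidden-minor argument you sketch would not supply it.
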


\begin{proof}
Suppose $ \overrightarrow{H}$ is a triangulated planar oclique of order 15 dominated by one vertex $v$.
Note that  $N^\alpha(v)$  is a \ESOK{relative oriented} clique in $\overrightarrow{H}[N(v)]$ (that is, the oriented subgraph of $\overrightarrow{H}$ induced by the neighbors of $v$, which is actually the oriented graph obtained by deleting the vertex $v$ from $\overrightarrow{H}$) for any $\alpha \in \{+,-\}$. Also note that $\overrightarrow{H}[N(v)]$
is an outerplanar graph. Hence, by Theorem~\ref{theoremouterrelative}, we have 
$|N^\alpha(v)| \leq 7$ for any $\alpha \in \{+,-\}$. But we also 
have
\begin{align}\nonumber
|N^+(v)| + |N^-(v)| = 14.\nonumber
\end{align} 
Hence we get
\begin{align}\nonumber
|N^+(v)| =|N^-(v)| =7.\nonumber
\end{align}

Now assume $N(v) = \{x_1,x_2,..., x_{14} \}$. 
Moreover, fix a planar embedding of 
$\overrightarrow{H}$ and, without  loss of generality, assume that the vertices $x_1, x_2, ..., x_{14}$ are arranged in a clockwise order around $v$. 
The triangulation of $\overrightarrow{H}$ forces the edges $x_1x_2$, $x_2x_3$, $\dots$, $x_{13}x_{14}$ and $x_{14}x_1$.
We know from the above discussion that there should be two disjoint \ESOK{relative oriented} cliques
$N^+(v)$ and $N^-(v)$, each of order 7, in the outerplanar graph $\overrightarrow{H}[N(v)]$. We already have the cycle $x_1x_2\dots x_{14}$ forced in the outerplanar graph $\overrightarrow{H}[N(v)]$.  We
will now prove  some more structural properties of $\overrightarrow{H}[N(v)]$.

As $\overrightarrow{H}[N(v)]$ is an outerplanar graph, it must have at least two vertices of degree at most 2.
As every vertex of the graph is part of a cycle, there is no vertex of degree at most 1. Hence there are at least two vertices of degree exactly 2 in $\overrightarrow{H}[N(v)]$.

Without loss of generality, assume that ${\rm deg}_{\overrightarrow{H}[N(v)]}(x_2) = 2$ 
and $x_2 \in N^\alpha(v)$ for some fixed $\alpha \in \{+,-\}$ and let $\{\alpha, \bar{\alpha}\} = \{+,-\}$.
Since $\overrightarrow{H}$ is triangulated, we must have the edge $x_1x_3$.
The vertices of $N^\alpha(v) \setminus \{x_1,x_2,x_3 \}$ must then be connected to $x_2$
by 2-dipaths with internal vertex either $x_1$ or $x_3$. 

Let four vertices of $N^\alpha(v) \setminus \{x_1,x_2,x_3 \}$  be connected to 
$x_2$ by 2-dipaths with internal vertex $x_1$. Then there will be two vertices, among the
above 
mentioned four vertices, at weak directed  distance at most 3 which is a contradiction. 
So at most three vertices of $N^\alpha(v) \setminus \{x_1,x_2,x_3 \}$ can be connected to 
$x_2$ by 2-dipaths with internal vertex $x_1$.
Similarly we can show that at most three vertices of $N^\alpha(v) \setminus \{x_1,x_2,x_3 \}$ 
can be connected to 
$x_2$ by 2-dipaths with internal vertex $x_3$.

Now suppose there are at least two vertices $x_i,x_j \in N^\alpha(v) \setminus \{x_1,x_2,x_3 \}$ 
that are connected to $x_2$ by 2-dipaths with internal vertex $x_1$ and there are at least
two vertices  $x_k, x_l \in N^\alpha(v) \setminus \{x_1,x_2,x_3 \}$ 
that are connected to $x_2$ by 2-dipaths with internal vertex $x_3$. 

Notice that, as the graph $\overrightarrow{H}$ is planar, with the given planar embedding of $\overrightarrow{H}$
we must have $i,j > k,l$. Now, without loss of generality, we can assume that $i > j$ and $k > l$.
But it will be impossible to have weak directed distance at most 2 between $x_i$ and $x_l$ keeping the graph $\overrightarrow{H}$ planar. 
So, at least one of the vertices $x_1$ or $x_3$ must be the internal vertex of at most one 2-dipath connecting a vertex of 
$N^\alpha(v) \setminus \{x_1,x_2,x_3 \}$ to $x_2$. 

If at least one of the vertices $x_1$ or $x_3$ belongs to $N^{\overline{\alpha}}(v)$, then 
we have
\begin{align}\nonumber
|N^\alpha(v) \setminus \{x_1,x_2,x_3 \}| \geq 5.\nonumber
\end{align}
But then, by the above discussion,
we will have a contradiction (there will be at least two vertices of 
$N^\alpha(v) \setminus \{x_1,x_2,x_3 \}$ connected by 2-dipaths with internal vertex $x_1$ and at least two vertices of 
$N^\alpha(v) \setminus \{x_1,x_2,x_3 \}$ connected by 2-dipaths with internal vertex $x_3$).

Hence we must have $x_1,x_3 \in N^\alpha(v)$. Without loss of generality, we have three vertices   
$x_i,x_j,x_k \in N^\alpha(v) \setminus \{x_1,x_2,x_3 \}$ connected by 2-dipaths with internal vertex $x_1$ and one vertex $x_l \in N^\alpha(v) \setminus \{x_1,x_2,x_3 \}$ connected by a 2-dipath with internal vertex $x_3$. Without loss of generality, we can assume $i > j > k > l$.

Now, to have $\overline{d}(x_i,x_s) \leq 2$ for $s \in \{2,3,l \}$, the vertices $x_2,x_3,x_l$ must disagree with $x_i$ on $x_1$. 
Also, to have $\overline{d}(x_i,x_k)\le 2$ and $\overline{d}(x_2,x_l) \leq 2$, we must have the 2-dipaths $x_ix_jx_k$ and $x_2x_3x_l$. But then  the 
induced oriented graph $\overrightarrow{H}[N^\alpha(v)]$ contains the oriented 
graph induced by $N^\alpha(a_0)$ of the planar oclique depicted in Figure~\ref{fig planarmax}.

Further notice that  no  vertex of $N^\alpha(v)$, other than $x_2$, has degree 2 in 
$\overrightarrow{H}[N(v)]$. Hence we can infer that a vertex of  $N^{\bar{\alpha}}(v)$ has  degree 2 in 
$\overrightarrow{H}[N(v)]$. That will imply that the induced oriented graph $\overrightarrow{H}[N^{\overline{\alpha}}(v)]$ contains the oriented 
graph induced by 
$N^{\overline{\alpha}}(a_0)$ of the planar oclique depicted in Figure~\ref{fig planarmax}.

Hence the planar oclique depicted in Figure~\ref{fig planarmax} is a subgraph of $\overrightarrow{H}$.
It is easy to check that, regardless of the choice of $\overrightarrow{H}$ (it is a triangulation of 
the planar oclique depicted in Figure~\ref{fig planarmax}), if we delete one arc of the oriented subgraph, isomorphic to the planar oclique depicted in Figure~\ref{fig planarmax}, of $\overrightarrow{H}$, 
 the 
oriented graph $\overrightarrow{H}$ does no longer remain an oclique. 
\end{proof}

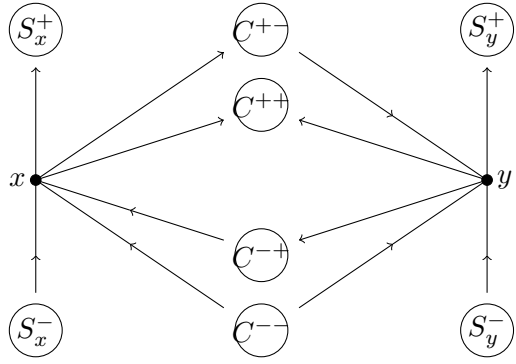
\begin{figure}

\centering
\begin{tikzpicture}

\draw [black] (3,0) circle (10pt) {node[]{$C^{--}$}};
\draw [black] (3,1) circle (10pt) {node[]{$C^{-+}$}};
\draw [black] (3,3) circle (10pt) {node[]{$C^{++}$}};
\draw [black] (3,4) circle (10pt) {node[]{$C^{+-}$}};

\draw [black] (0,0) circle (10pt) {node[]{$S_x^{-}$}};
\draw [black] (0,4) circle (10pt) {node[]{$S_x^{+}$}};

\draw [black] (6,0) circle (10pt) {node[]{$S_y^{-}$}};
\draw [black] (6,4) circle (10pt) {node[]{$S_y^{+}$}};

\filldraw [black] (0,2) circle (2pt) {node[left]{$x$}};
\filldraw [black] (6,2) circle (2pt) {node[right]{$y$}};

\draw[->] (0,.5) -- (0,1);
\draw[->] (0,1) -- (0,3.5);

\draw[->] (6,.5) -- (6,1);
\draw[->] (6,1) -- (6,3.5);

\draw[->] (0,2) -- (2.5,3.7);

\draw[->] (0,2) -- (2.5,2.8);

\draw[->] (2.5,1.2) -- (1.25,1.6);
\draw[-] (1.25,1.6) -- (0,2);

\draw[->] (2.5,.3) -- (1.25,1.15);
\draw[-] (1.25,1.15) -- (0,2);

\draw[->] (3.5,3.7) -- (4.75,2.85);
\draw[-] (4.75,2.85) -- (6,2);

\draw[->] (6,2) -- (3.5,2.8);

\draw[->] (6,2) -- (3.5,1.2);

\draw[->] (3.5,.3) -- (4.75,1.15);
\draw[-] (4.75,1.15) -- (6,2);

\end{tikzpicture}

\caption{Structure of $\vec{G}$ (not a planar embedding)}\label{not emb}

\end{figure}

Now, to prove Theorem~\ref{unique}, it will be enough to prove that every planar oclique of order 
at least 15 must have domination number 1. In other words, it will be enough to prove that 
any planar oclique with domination number 2 must have order at most 14. More precisely, we need to prove the following lemma.

\begin{lemma}\label{lem planar dom 2 implies order 14}
Let $ \overrightarrow{H}$ be a planar oclique with domination number 2.  Then $ |\overrightarrow{H} | \leq 14$. 
\end{lemma}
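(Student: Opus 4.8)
The plan is to let $\overrightarrow{H}$ be a planar oclique with domination number $2$, say dominated by the set $\{x,y\}$, and to count vertices carefully according to their adjacency pattern to $x$ and $y$. Every vertex of $\overrightarrow{H}$ other than $x,y$ lies in $N^\alpha(x)$ and/or $N^\beta(y)$ for signs $\alpha,\beta\in\{+,-\}$; thus the vertex set splits into the singletons $\{x\},\{y\}$, four "corner" sets $C^{\alpha\beta} = N^\alpha(x)\cap N^\beta(y)$, and four "side" sets $S_x^\alpha = N^\alpha(x)\setminus N(y)$ and $S_y^\beta = N^\beta(y)\setminus N(x)$ (this is exactly the partition drawn in Figure~\ref{not emb}). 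So it suffices to bound the sizes of these eight sets and add up, targeting a total of at most $14$.

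First I would bound the side sets. For a vertex $u\in S_x^\alpha$, the only way $u$ can be at weak directed distance $\le 2$ from $y$ is through a neighbor of $y$; hence each such $u$ is adjacent to some vertex of $N(y)$. More importantly, I would argue that $S_x^\alpha \cup N^{\bar\alpha\text{-side-of-}x}$ together with $x$ and the relevant corner vertices forms a relative oriented clique living in an outerplanar (or near-outerplanar) induced subgraph: all vertices of $N^\alpha(x)$ must be pairwise at weak directed distance $\le 2$, and since they are all out-neighbours (or all in-neighbours) of $x$, the required 2-dipaths cannot use $x$ as an internal vertex, so they are witnessed inside $\overrightarrow{H}[N(x)]$, which is outerplanar because $\overrightarrow{H}$ is planar. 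Theorem~\ref{theoremouterrelative} then gives $|N^\alpha(x)| \le 7$, i.e. $|S_x^\alpha| + |C^{\alpha+}| + |C^{\alpha-}| \le 7$, and similarly for $y$. Summing the two inequalities $|N^+(x)|+|N^-(x)|\le 14$ would already give $|\overrightarrow{H}|\le 16$; to shave off the last two vertices I would use the interaction with $y$: the side sets $S_x^\alpha$ are nonempty only if extra structure (the 2-dipaths to $y$) is present, and the planar embedding forces the corners $C^{\alpha\beta}$ and the sides to be arranged cyclically around $x$, limiting how many 2-dipaths can be routed.

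The heart of the argument, and the step I expect to be the main obstacle, is ruling out the configurations of order $15$ and $16$. Here I would fix a planar embedding and examine the cyclic order of the vertices of $N(x)$ around $x$; the out-neighbours and in-neighbours of $x$ each form arcs of this cyclic order (up to the side/corner interleaving), and the relative-oriented-clique condition inside the outerplanar graph $\overrightarrow{H}[N(x)]$ is very restrictive — essentially one recovers the $N^\alpha(a_0)$-gadget of Figure~\ref{fig planarmax} as in the previous lemma. The key planarity obstruction is the same one used repeatedly above: two 2-dipaths from a common terminal vertex through two different internal vertices, combined with a nested pair on the other side, cannot coexist with planarity, and a single internal vertex can serve as the hub of at most three such 2-dipaths before two of the endpoints are pushed to weak directed distance $3$. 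Carrying this case analysis through for both $x$ and $y$ simultaneously, and checking consistency of the shared corner sets $C^{\alpha\beta}$, should force $|\overrightarrow{H}|\le 14$; the bookkeeping — keeping track of which corner vertices are counted toward $x$'s bound versus $y$'s bound, and using $\gamma(\overrightarrow{H})=2$ (so no single vertex dominates, ruling out the degenerate cases) — is where the real work lies.
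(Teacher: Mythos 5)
Your plan hinges on the bound $|N^\alpha(x)|\le 7$, obtained by viewing $N^\alpha(x)$ as a relative oriented clique of the outerplanar graph $\overrightarrow{H}[N(x)]$ and invoking Theorem~\ref{theoremouterrelative}. That step is not valid in the domination-number-2 setting. When a single vertex $v$ dominates the oclique, every vertex other than $v$ lies in $N(v)$, so any 2-dipath joining two vertices of $N^\alpha(v)$ has its internal vertex in $N(v)$ (it cannot be $v$ itself), and the relative-clique argument applies --- this is exactly how the paper handles the domination-number-1 case. With domination number 2 it fails: two vertices of $N^\alpha(x)$ may be joined by a 2-dipath whose internal vertex lies in $S_y=N(y)\setminus N(x)$, i.e.\ outside $N(x)$ (in a triangulation the neighbours of $x$ form a cycle, but a vertex outside that cycle can be adjacent to two of them and serve as the hub of such a 2-dipath), so $N^\alpha(x)$ need not be a relative oriented clique of $\overrightarrow{H}[N(x)]$ and Theorem~\ref{theoremouterrelative} gives you nothing. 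Moreover, even granting $|N(x)|\le 14$, your arithmetic is off: $|\overrightarrow{H}|=2+|C|+|S_x|+|S_y|$, and the vertices of $S_y$ are not counted by $|N(x)|$, so the bound on $N(x)$ alone does not yield $|\overrightarrow{H}|\le 16$; you would need simultaneous control of $|N(y)|$ and of the overlap $C=N(x)\cap N(y)$.

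Beyond this, the part you call the heart of the argument is only a sketch, and it is precisely where the lemma lives. The paper's proof chooses the dominating pair $\{x,y\}$ so as to maximize $|N(x)\cap N(y)|$, partitions the plane into regions bounded by $x$, $y$ and the common neighbours $c_0,\dots,c_{k-1}$, establishes the distribution lemmas (Lemmas~\ref{adjacent}, \ref{connection} and \ref{3same}) for vertices of $S$ among these regions, and then derives the two contradictory bounds $|C|\ge 6$ (Lemmas~\ref{g2}, \ref{1region} and \ref{ge6}) and $|C|\le 5$ (Lemmas~\ref{helper} and \ref{le5}). The maximality of the chosen dominating pair is the engine of nearly every contradiction in that analysis, and your proposal contains neither this extremal choice nor any concrete substitute for it; the assertion that carrying the case analysis through ``should force $|\overrightarrow{H}|\le 14$'' is the statement to be proved, not a proof of it. So both the one quantitative step you do give and the main argument have genuine gaps.
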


Let $ \overrightarrow{G}$ be a planar oclique with $|\overrightarrow{G}| > 14$. Assume that $ \overrightarrow{G}$ is triangulated and has domination number 2.

We define a partial order $\prec$ on the set of all dominating sets of order 2 of $ \overrightarrow{G}$ as follows: for any two
dominating sets $D = \{x,y\}$  and $D' = \{ x',y' \}$ of order 2 of $ \overrightarrow{G}$,  
$D' \prec D$ if and only if $ |N(x') \cap N(y')| < |N(x) \cap N(y)| $.

Let $D = \{x,y\}$ be a maximal dominating set of order 2 of $ \overrightarrow{G}$ with respect to $\prec$. Also, for the remainder of this section, let $t,t^\prime,\alpha, \overline{\alpha}, \beta, \overline{\beta}$ be variables satisfying $\{t,t^\prime \} = \{x, y\}$ and $\{\alpha, \overline{\alpha} \} = \{\beta, \overline{\beta} \} = \{+,- \}$.

Let us fix the following notations (see Figure~\ref{not emb}): 
\begin{align}\nonumber
C = N(x) \cap N(y) \text{, } C^{\alpha \beta} = N^{\alpha}(x) \cap N^{\beta}(y) \text{, } C_t^{\alpha} = N^{\alpha}(t) \cap C, \\
S_t = N(t) \setminus C \text{, } S^{\alpha}_t = S_t \cap N^{\alpha}(t) \text{, } S = S_x \cup S_y. \nonumber
\end{align}

Hence we have
\begin{equation}\label{order}
15 \leq | \overrightarrow{G} | = | D | + | C | + | S |.
\end{equation}
 
\begin{figure}

\centering
\begin{tikzpicture}

\filldraw [black] (0,2) circle (2pt) {node[left]{$x$}};
\filldraw [black] (8,2) circle (2pt) {node[right]{$y$}};

\filldraw [black] (4,4) circle (2pt) {node[above]{$c_0$}};

\filldraw [black] (4,3.2) circle (2pt) {node[below]{$c_1$}};

\filldraw [black] (4,2.5) circle (2pt) {node[below]{$c_2$}};

\filldraw [black] (4,1.5) circle (2pt) {node[below]{$c_i$}};

\filldraw [black] (4,.5) circle (2pt) {node[below]{$c_{k-2}$}};

\filldraw [black] (4,-.3) circle (2pt) {node[below]{$c_{k-1}$}};


\draw[-] (0,2) -- (4,-.3);
\draw[-] (0,2) -- (4,3.2);
\draw[-] (0,2) -- (4,4);
\draw[-] (0,2) -- (4,1.5);
\draw[-] (0,2) -- (4,2.5);
\draw[-] (0,2) -- (4,.5);

\draw[-] (8,2) -- (4,-.3);
\draw[-] (8,2) -- (4,3.2);
\draw[-] (8,2) -- (4,4);
\draw[-] (8,2) -- (4,1.5);
\draw[-] (8,2) -- (4,2.5);
\draw[-] (8,2) -- (4,.5);

\draw[dotted] (4,2.5) -- (4,.5);

\node at (3,3.9){$R_0$};

\node at (5,3.2){$R_1$};

\node at (4.8,.5){$R_{k-1}$};

\end{tikzpicture}

\caption{A planar embedding of $und( \vec{H})$ }\label{undh}

\end{figure}
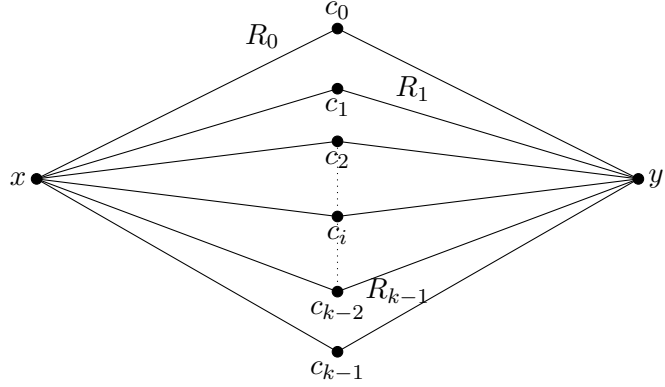

Let  $ \overrightarrow{H}$ be the oriented graph obtained from the induced subgraph $ \overrightarrow{G}[D \cup C]$ of $ \overrightarrow{G}$ by deleting all the arcs between the vertices of $D$ and  all the arcs 
 between the vertices of $C$.  Note that it is possible to extend  the planar embedding of $und(\overrightarrow{H})$ given in Figure~\ref{undh}  to a planar embedding of $und(\overrightarrow{G})$ for some particular ordering of the elements of, say  $C = \{c_0, c_1, \dots, c_{k-1} \} $.

Notice that $und( \overrightarrow{H})$ has $k$ faces, namely the unbounded face $F_0$ 
and the faces $F_{i}$ bounded by edges 
$xc_{i-1}$, $c_{i-1}y$, $yc_{i}$ and $c_{i}x$ for $i \in \{ 1, \dots, k-1 \}$. 
Geometrically, $und( \overrightarrow{H})$  divides the plane into $k$ connected components. The \textit{region}  $R_i$ of $ \overrightarrow{G}$  is the $i^{th}$ connected component (corresponding to the face $F_i$) of the plane.
The \textit{boundary points} of a region $R_i$ are $c_{i-1}$ and $c_i$ for $i \in \{ 1, \dots, k-1 \}$, and $c_0$ and $c_{k-1}$ for $i=0$. Two regions are \textit{adjacent} if  they have at least one common boundary point (hence, a region is adjacent to itself).

Now, for the different possible values of $| C |$, we want to show that  $und( \overrightarrow{H})$ cannot be extended to a planar oclique of order at least $15$. Note that, for extending $und( \overrightarrow{H})$ to $ \overrightarrow{G}$, we can add new vertices only from $S$. Any vertex $v \in S$ will be inside one of the regions $R_i$. If there is at least one vertex of $S$ in a region $R_i$, then $R_i$ is \textit{non-empty} and \textit{empty} otherwise. 
In fact,
when there is no chance of confusion,  $R_i$ might represent the set of vertices of $S$ contained in the region $R_i$. 

As any two distinct non-adjacent vertices of $ \overrightarrow{G}$ must be connected by a 2-dipath, we have the following three lemmas:

\begin{lemma}\label{adjacent}

\begin{enumerate}[{\rm (a)}]
\item If $(u,v)\in S_x \times S_y$ or $(u,v)\in S^\alpha_t \times S^\alpha_t$, 
 then $u$ and $v$ are in adjacent regions.
\item If $(u,c) \in S^\alpha_t \times C^\alpha_t$, 
 then $c$ is a boundary point of a region adjacent to the region containing $u$.
 \end{enumerate}
\end{lemma}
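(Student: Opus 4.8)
The plan is to work inside the fixed planar embedding of $und(\overrightarrow{G})$ extending the embedding of $und(\overrightarrow{H})$ from Figure~\ref{undh}, and to turn the oclique condition --- any two non-adjacent vertices are joined by a 2-dipath --- into constraints on which of the regions $R_0,\dots,R_{k-1}$ can contain the vertices of $S$ involved. The geometric ingredient I would isolate first is the following \emph{locality} fact: if $w\in S$ lies in the region $R_i$, then $w$ is drawn strictly inside $R_i$, whose boundary consists of edges of $\overrightarrow{H}$, so by planarity no edge of $\overrightarrow{G}$ incident with $w$ crosses that boundary; hence every neighbor of $w$ is either $x$, $y$, one of the two boundary points of $R_i$, or a vertex of $S$ lying inside $R_i$. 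The same argument shows that an edge of $\overrightarrow{G}$ joining two vertices $c_p,c_q\in C$ can only be drawn inside a region having both $c_p$ and $c_q$ on its boundary, which forces their indices to satisfy $|p-q|\le1$ modulo $k$; equivalently, $c_q$ is then a boundary point of a region adjacent to every region that carries $c_p$ on its boundary.

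Both parts then follow by a short case analysis on a connecting path. For (a), let $u\in R_i$ and $v\in R_j$. If $u$ and $v$ are adjacent, locality applied to $u$ forces $v\in R_i$ (a vertex of $S$ is never a boundary point), so $R_i=R_j$, a region adjacent to itself. If not, take a 2-dipath between $u$ and $v$ and let $w$ be its internal vertex, so that $w$ is adjacent to both. Now $w\notin\{x,y\}$: in the case $(u,v)\in S_x\times S_y$ this is because $u$ is non-adjacent to $y$ and $v$ is non-adjacent to $x$; in the case $(u,v)\in S^\alpha_t\times S^\alpha_t$ this is because $u,v$ are non-adjacent to $t'$ and, agreeing on $t$, cannot be joined by a 2-dipath through $t$. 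Hence $w\in C\cup S$. If $w\in S$, locality applied to $u$ and to $v$ gives $w\in R_i\cap R_j$, so $R_i=R_j$; if $w=c_m\in C$, locality gives $m\in\{i-1,i\}$ and $m\in\{j-1,j\}$, and two such sets of (cyclically) consecutive indices overlap precisely when $R_i$ and $R_j$ are adjacent. Part (b) is similar, writing $c=c_m$: if $u$ and $c$ are adjacent, locality forces $m\in\{i-1,i\}$, so $c$ is a boundary point of $R_i$ itself; otherwise a connecting 2-dipath has internal vertex $w\notin\{x,y\}$ (again $w\ne t$ since $u\in S^\alpha_t$ and $c\in C^\alpha_t$ agree on $t$), and then either $w\in S$, whence $w\in R_i$ and $m\in\{i-1,i\}$, or $w=c_p\in C$ with $p\in\{i-1,i\}$, so that the edge $c_pc_m$ yields $|p-m|\le1$ and $m\in\{i-2,i-1,i,i+1\}$ --- exactly the indices of the boundary points of $R_{i-1}$, $R_i$ and $R_{i+1}$, i.e.\ of the regions adjacent to $R_i$.

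The index arithmetic is routine but needs a little care at the wrap-around around $R_0$, whose two boundary points $c_0$ and $c_{k-1}$ sit far apart in the labelling, and the orientation bookkeeping --- discarding the impossible choices of internal vertex --- is immediate from the definitions of \emph{agree} and \emph{disagree}. The step I would write out with the most care is the locality fact itself: one has to justify precisely, from planarity, that a vertex inserted into a region of $und(\overrightarrow{H})$, and likewise every edge joining two vertices of $C$, is confined to a single region. Granting that, Lemma~\ref{adjacent} is an immediate consequence of the oclique property.
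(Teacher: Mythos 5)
Your proof is correct, and it is essentially the intended argument: the paper states Lemma~\ref{adjacent} without an explicit proof, as an immediate consequence of planarity of the fixed embedding and the fact that non-adjacent vertices of an oclique must be joined by a 2-dipath, which is exactly the locality-plus-case-analysis you spell out (including the correct exclusion of $x,y$ as internal vertices and the cyclic bookkeeping at $R_0$). No gaps.
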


\begin{lemma}\label{connection}
Let $R$, $R^1$ and $R^2$ be three distinct regions such that $R$ is adjacent to  $R^i$  
with common boundary point $c^i$ while the other boundary point of 
$R^i$ is $ \overline{c^i}$, for all $i \in \{1,2 \}$. 
If $v \in S^\alpha_t \cap R$ and 
$ u^i \in ((S^\alpha_t \cup S_{t'}) \cap R^i) \cup (\{\overline{c^i} \} \cap C^\alpha_t) $, then $v$ 
disagrees with $u^i$ on $c^i$, for all 
$i\in \{1,2 \}$. Moreover, if both $u^1$ and $u^2$ exist, then 
$|S^\alpha_t \cap R| \leq 1$. 
\end{lemma}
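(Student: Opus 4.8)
The plan is to read off everything from the planar structure pictured in Figure~\ref{undh}. In the chosen embedding each region $R_i$ is a disk whose boundary is the $4$-cycle through $x,c_{i-1},y,c_i$ in this cyclic order (so $t$ and $t'$ sit at opposite corners, and the two boundary points of $R_i$ at the other two corners), and every vertex of $S$ lies in the interior of one region. The single fact that drives the whole proof is: \emph{if $w$ lies in the interior of a region $R$, then every neighbour of $w$ in $\overrightarrow{G}$ lies either in the interior of $R$ or on the bounding $4$-cycle of $R$}, because an edge of $\overrightarrow{G}$ can neither cross an edge of $und(\overrightarrow{H})$ nor pass through a third vertex. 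In particular, for $v\in S^\alpha_t\cap R$, since $v$ is \emph{not} adjacent to $t'$, we get $N_{\overrightarrow{G}}(v)\subseteq(\mathrm{int}\,R)\cup\{c^1,c^2,t\}$, where $c^1,c^2$ are the two boundary points of $R$.

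For the first assertion, fix $i$ and assume $u=u^i$ exists. First I would observe that $v$ and $u$ are non-adjacent: if $u$ lies in the interior of $R^i\ne R$ this is immediate from the displayed fact, and if $u=\overline{c^i}$ then $u$ is a vertex of $C$ that is neither $t$, nor $t'$, nor a boundary point of $R$, hence not a neighbour of $v$. Since $\overrightarrow{G}$ is an oclique, $v$ and $u$ are therefore joined by a $2$-dipath, so they have a common neighbour $w$ that is its internal vertex; equivalently, $v$ and $u$ \emph{disagree} on $w$. Now I would describe $N_{\overrightarrow{G}}(u)$ via the same displayed fact (it lies in $(\mathrm{int}\,R^i)\cup\{c^i,\overline{c^i},t\}$ if $u\in S^\alpha_t$, in $(\mathrm{int}\,R^i)\cup\{c^i,\overline{c^i},t'\}$ if $u\in S_{t'}$, and in $\{t,t'\}$ together with the interiors and boundary points of the at most two regions having $\overline{c^i}$ on their boundary when $u=\overline{c^i}\in C^\alpha_t$) and intersect it with $N_{\overrightarrow{G}}(v)\subseteq(\mathrm{int}\,R)\cup\{c^1,c^2,t\}$; since the interiors of distinct regions are disjoint, the only vertices left that can be $w$ are $c^i$ and, in two of the cases, $t$. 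But $t$ is excluded: if $u\in S^\alpha_t$ or $u=\overline{c^i}\in C^\alpha_t$ then $v,u\in N^\alpha(t)$, so $v$ and $u$ \emph{agree} on $t$ and no $2$-dipath can have $t$ as its internal vertex; and if $u\in S_{t'}$ then $t\notin N(u)$ (and $t'\notin N(v)$), so $t$ is not even a common neighbour. Hence $w=c^i$, which is exactly the desired conclusion (and, as a by-product, $c^i\in N(v)\cap N(u^i)$). This argument is essentially the ``from scratch'' version of Lemma~\ref{adjacent}.

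For the ``moreover'' part, suppose both $u^1$ and $u^2$ exist, and suppose for contradiction that $v,v'$ are two distinct vertices of $S^\alpha_t\cap R$. Since $R$ is adjacent to the distinct regions $R^1,R^2$ through distinct boundary points (each $c_j$ bounds exactly two regions), those two boundary points are $c^1$ and $c^2$, i.e. $\{c^1,c^2\}=\partial R$. Applying the first assertion to $v$, and then to $v'$, with $i=1$ and $i=2$, each of $v,v'$ is adjacent to $c^1$ and to $c^2$; and each is adjacent to $t$ since $S^\alpha_t\subseteq N(t)$. So $v$ is adjacent to the three corners $t,c^1,c^2$ of the bounding $4$-cycle of the disk $R$, and the three pairwise non-crossing edges $vt,vc^1,vc^2$ split $R$ into three sub-regions whose boundaries are, respectively, the $3$-cycle $(v,t,c^1)$, the $4$-cycle $(v,c^1,t',c^2)$ and the $3$-cycle $(v,c^2,t)$ (any further subdivision, e.g. by an edge $c^1c^2$, is harmless). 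The vertex $v'$ lies in the interior of exactly one of these; it cannot be the middle one, whose boundary misses $t$ while $v'\sim t$; and in each of the other two the boundary misses one of $c^1,c^2$, contradicting $v'\sim c^1$ and $v'\sim c^2$. Hence $|S^\alpha_t\cap R|\le 1$.

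The step I expect to be the real work is the bookkeeping in the second paragraph: pinning down exactly which common neighbours of $v$ and $u$ are geometrically possible, especially in the case $u=\overline{c^i}$ and in degenerate situations with small $|C|$ where a region may share both of its boundary points with $R$ or where $\overline{c^1}=\overline{c^2}$. The helpful observation there is that a chord of the cycle $c_0c_1\cdots c_{k-1}$ joining two non-consecutive vertices and skipping some $c_j$ would, in the embedding of Figure~\ref{undh}, separate $c_j$ from $x$ or from $y$, which is impossible since $c_j$ is adjacent to both; combined with the separate direct treatment of the small values of $|C|$ in the proof of Lemma~\ref{lem planar dom 2 implies order 14}, this leaves $c^i$ as the only possible connector.
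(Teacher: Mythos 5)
Your proposal follows what is evidently the paper's own (unwritten) justification: the paper states Lemma~\ref{connection} with no proof at all, presenting it --- together with Lemmas~\ref{adjacent} and~\ref{3same} --- as an immediate consequence of the fact that non-adjacent vertices of an oclique must be joined by a 2-dipath, combined with the planar embedding of Figure~\ref{undh}. Your write-up is the natural fleshed-out version of exactly that argument: planarity confines $N(v)$ to $\mathrm{int}\,R\cup\{c^1,c^2,t\}$, disjointness of region interiors pins the internal vertex of the forced 2-dipath down to $c^i$ (with $t$ excluded because $v$ and $u^i$ agree on $t$, or because $t\notin N(u^i)$ when $u^i\in S_{t'}$), and your disk-splitting argument for the ``moreover'' part is correct, including the observation that $c^1\neq c^2$ because a vertex of $C$ bounds exactly two regions.

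The one genuine gap is the corner you yourself flagged, and the patches you propose do not close it. When $u^i=\overline{c^i}\in C^\alpha_t$ and $\overline{c^1}=\overline{c^2}$ (equivalently $|C|=3$, so that $R$, $R^1$, $R^2$ are all the regions), the vertex $\overline{c^i}$ is consecutive on the cycle of $C$-vertices to \emph{both} boundary points of $R$, so the intersection $N(v)\cap N(\overline{c^i})$ may contain both $c^1$ and $c^2$; your neighbourhood bookkeeping then only yields ``$v$ disagrees with $\overline{c^i}$ on $c^1$ or on $c^2$'', not the specific disagreement on $c^i$ claimed in the statement (which, read literally with both labellings, asserts disagreement on both). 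Your chord observation excludes nothing here, since for $|C|=3$ every pair of $C$-vertices is consecutive and may be joined by an edge inside the third region; and deferring to ``the separate direct treatment of the small values of $|C|$'' is circular, because that treatment (Lemmas~\ref{1region} and~\ref{ge6}) is itself carried out by invoking Lemma~\ref{connection}. To finish you must either give a direct argument forcing the 2-dipath through the prescribed $c^i$ in this $|C|=3$ configuration, or prove (and then use) the weaker disjunctive conclusion in that case, checking that it suffices for the applications; as it stands, this case of the first assertion --- and hence also the adjacency to both $c^1$ and $c^2$ that your ``moreover'' argument borrows from it when some $u^i$ is the vertex $\overline{c^i}$ --- is not established.
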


\begin{lemma}\label{3same}
For any arc $ \overrightarrow{uv}$ in $ \overrightarrow{G}$,  $|N^\alpha(u) \cap N^\beta(v)| \leq 3$. 
\end{lemma}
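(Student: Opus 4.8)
\emph{Proof sketch.} The plan is to argue by contradiction: assume that $W := N^\alpha(u)\cap N^\beta(v)$ has at least four vertices and derive a violation of the oclique condition. First I would record the combinatorial picture. Every vertex of $W$ is adjacent to both $u$ and $v$ and, since $\overrightarrow{uv}$ is an arc, the subgraph of $\overrightarrow{G}$ induced by $\{u,v\}\cup W$ contains the ``book'' $K_{2,|W|}$ together with the edge $uv$. Fixing a planar embedding of $\overrightarrow{G}$ and restricting it to this book, the vertices of $W$ acquire a cyclic order $w_1,\dots,w_m$ (the rotation around $u$), with the edge $uv$ lying between two cyclically consecutive ones; relabelling, I would arrange this so that $uv$ lies between $w_m$ and $w_1$. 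Then the two faces of the book incident with $w_1$ are the triangle $uvw_1$ and the quadrilateral $uw_1vw_2$, and those incident with $w_m$ are $uvw_m$ and $uw_{m-1}vw_m$.

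Next I would establish two observations. (i) Neither $u$ nor $v$ can be the internal vertex of a $2$-dipath joining two vertices of $W$: all arcs between $u$ and $W$ are oriented the same way (towards $W$ if $\alpha=+$, away from it if $\alpha=-$), so $u$ cannot have one member of $W$ as an in-neighbour and another as an out-neighbour, and symmetrically for $v$ with $\beta$. Hence for any two non-adjacent vertices of $W$, the $2$-dipath realising their weak directed distance has its internal vertex outside $\{u,v\}$. (ii) Since every vertex and every edge of $\overrightarrow{G}$ lying outside the book sits inside some face of the book, every neighbour of $w_1$ other than $u$ and $v$ lies on the boundary of, or inside, one of the two book-faces incident with $w_1$, and likewise for $w_m$. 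Using $m\ge 4$, the four faces listed above are pairwise distinct and the boundary of any face incident with $w_1$ meets the boundary of any face incident with $w_m$ exactly in $\{u,v\}$; therefore $w_1$ and $w_m$ are non-adjacent in $\overrightarrow{G}$ and their only common neighbours are $u$ and $v$. Combined with (i), this means $w_1$ and $w_m$ are at weak directed distance greater than $2$, contradicting that $\overrightarrow{G}$ is an oclique. Hence $|W|\le 3$.

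I expect the main obstacle to be the planar bookkeeping in observation (ii): correctly identifying the two faces of the book incident with $w_1$ and with $w_m$ --- which is exactly why the normalisation placing $uv$ between $w_m$ and $w_1$ in the rotation around $u$ is convenient --- and then checking the four boundary intersections. It is also worth stressing that $m\ge 4$ is genuinely used, because when $m=3$ the vertices $w_1$ and $w_3$ are cyclically consecutive and the relevant faces can coincide, so the argument correctly fails to prove any bound smaller than $3$.
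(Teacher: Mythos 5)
Your argument is correct, and it is essentially the paper's (implicit) justification: the paper states Lemma~\ref{3same} without proof as a direct consequence of planarity and the weak-directed-distance-2 condition, and your analysis — all of $W$ agrees with itself on both $u$ and $v$, so no 2-dipath between two vertices of $W$ can pass through $u$ or $v$, while the plane book $K_{2,m}$ plus the edge $uv$ leaves the two vertices flanking $uv$ in the rotation with no common face, hence (for $m\ge 4$) no common neighbour outside $\{u,v\}$ and no edge between them — is exactly the intended reasoning. The only nitpick is that the boundaries of the triangles $uvw_1$ and $uvw_m$ meet in the whole closed edge $uv$ rather than just the set $\{u,v\}$, but since the only vertices in that intersection are $u$ and $v$, your conclusion about common neighbours, and hence the contradiction with the oclique property, is unaffected.
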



Now we ask the question ``How small $|C|$ can be?'' and try to prove possible lower bounds  of
$|C|$. The first result regarding the lower bound of $|C|$ is proved below. 

\begin{lemma}\label{g2}
$| C | \geq 2$.
\end{lemma}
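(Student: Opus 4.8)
The plan is to argue by contradiction, ruling out the cases $|C|=0$ and $|C|=1$ separately. In both cases we will exploit the fact that $D=\{x,y\}$ is a dominating set, so every vertex of $\overrightarrow{G}$ other than $x,y$ lies in $S_x \cup S_y \cup C$; combined with \eqref{order} this forces $S_x$ and $S_y$ to be large. We will then use planarity together with Lemmas~\ref{adjacent}, \ref{connection} and \ref{3same} to bound the sizes of $S_x$ and $S_y$, obtaining $|\overrightarrow{G}| \leq 14$, a contradiction.

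First I would dispose of $|C|=0$. Here $x$ and $y$ are non-adjacent (if they were adjacent, triangulation would put a common neighbor in $C$), so they must be joined by a $2$-dipath; its internal vertex lies in $S_x \cap S_y = \emptyset$ — already a contradiction — unless one rereads the setup so that $x,y$ need not be non-adjacent. The cleaner route: with $C=\emptyset$ we have $V(\overrightarrow{G}) = \{x,y\} \cup S_x \cup S_y$ with $S_x \cap S_y = \emptyset$, and the subgraph $\overrightarrow{H}$ degenerates; every vertex of $S_y$ must be at weak directed distance $\leq 2$ from every vertex of $S_x$, and by Lemma~\ref{adjacent}(a) consecutive such vertices force edges, so $und(\overrightarrow{G})$ restricted to $S_x\cup S_y\cup\{x,y\}$ contains a large complete bipartite-like structure; planarity then caps $|S_x|+|S_y|$ well below $13$. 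Concretely, since each vertex of $S_x$ together with $x$ induces (via $2$-dipaths through $x$) a relative oriented clique relation, and $\overrightarrow{G}[N(x)]=\overrightarrow{G}[S_x]$ is outerplanar (it is $\overrightarrow{G}$ minus $x$ restricted to $x$'s neighborhood in a planar triangulation), Theorem~\ref{theoremouterrelative} gives $|N^\alpha(x)| \leq 7$ and hence $|S_x| \leq 14$; but the interaction with $S_y$ through $2$-dipaths, handled as in Lemma~\ref{connection}, is what pushes the total down to at most $14$. I expect this case to be short once the right outerplanarity observation is invoked.

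The case $|C|=1$, say $C=\{c_0\}$, is the substantive one. Now $\overrightarrow{G} = \{x,y,c_0\} \cup S_x \cup S_y$, and $und(\overrightarrow{H})$ has a single region $R_0$ (the sphere minus the path $x c_0 y$), so \emph{every} vertex of $S_x \cup S_y$ lies in that one region. The strategy is: (i) use Lemma~\ref{3same} on the arcs incident to $x$ to bound $|S_x^{\alpha}|$ — each vertex of $S_x^\alpha$ other than at most finitely many must be reached from the others via $2$-dipaths internal at $c_0$ or at $x$ itself, and $|N^\alpha(x)\cap N^\beta(\cdot)|\le 3$ caps the fan; (ii) use the outerplanarity of $\overrightarrow{G}[N(x)]$ (and symmetrically $\overrightarrow{G}[N(y)]$) plus Theorem~\ref{theoremouterrelative} to get $|N^\alpha(x)| \le 7$; (iii) use Lemma~\ref{connection} with the (essentially unique) region to show that $S_x$ and $S_y$ cannot both be large, since every vertex of $S_x$ must disagree with every vertex of $S_y$ on $c_0$, which forces almost all of $S_x \cup S_y$ into a single orientation class around $c_0$ and then Lemma~\ref{3same} on an arc at $c_0$ bounds that class by $3$. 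Adding these up yields $|C|+|S|+|D| \le 1 + \text{(something)} + 2 \le 14$.

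The main obstacle I anticipate is step (iii): carefully extracting the planar-embedding constraint that, with only one region available, the $2$-dipaths witnessing weak directed distance $\leq 2$ between $S_x$-vertices and $S_y$-vertices must pass through $c_0$ (they cannot pass through $x$ or $y$ without the relevant endpoint lying in $C$, contradicting $|C|=1$), and then translating ``disagree on $c_0$'' plus planarity into a genuine cap of $3$ on the offending orientation class via Lemma~\ref{3same}. A secondary subtlety is making sure the maximality of $D$ with respect to $\prec$ is not needed here (it is not, since $|C|$ is already minimal), and that triangulation is used only to justify that $\overrightarrow{G}[N(x)]$ is outerplanar. Once (iii) is in place, the arithmetic against \eqref{order} is immediate.
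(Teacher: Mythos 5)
Your reduction of the easy cases is fine and matches the paper: if $x$ and $y$ are adjacent the triangulation forces a common neighbour, and if they are non-adjacent the internal vertex of a 2-dipath joining them lies in $N(x)\cap N(y)=C$, so $|C|=0$ is impossible; the ``cleaner route'' you sketch for $|C|=0$ is unnecessary and, as written, vague. So, as in the paper, everything hinges on ruling out $|C|=1$ with $x,y$ non-adjacent.

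In that case your outline has a genuine gap. Step (iii) rests on the claim that every vertex of $S_x$ must be joined to every vertex of $S_y$ by a 2-dipath with internal vertex $c_0$, hence that all such pairs disagree on $c_0$. This is false: a vertex of $S_x$ may simply be adjacent to a vertex of $S_y$, or the witnessing 2-dipath may have its internal vertex in $S$ itself --- only $x$ and $y$ are excluded as internal vertices (they would force the other endpoint into $C$). The paper's proof spends most of its effort on precisely these $S_x$--$S_y$ adjacencies (edges such as $x_2y_i$, $y_1x_2$, $y_1x_3$, $y_1x_4$, $y_2x_3$), and your subsequent use of Lemma~\ref{3same} ``at $c_0$'' also presupposes that the relevant vertices of $S$ are adjacent to $c_0$, whereas the triangulation only forces this for the extreme vertices $x_1,x_{n_x}$ and $y_1,y_{n_y}$. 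Moreover, your assertion that the maximality of $D$ with respect to $\prec$ is not needed is wrong and removes the engine of the actual argument: since $|C|=1$, maximality says no dominating pair of order $2$ has two or more common neighbours, and the paper derives its contradictions repeatedly in exactly this form (``otherwise $\{c_0,y\}$, $\{x_2,y\}$, $\{c_0,x\}$, \dots would be a dominating set with at least two common neighbours''); this is what bounds how many vertices of $S_y$ can be adjacent to $c_0$ or to any fixed $x_i$, and nothing in your outline replaces it. The outerplanarity bound from Theorem~\ref{theoremouterrelative} only gives $|N^\alpha(x)|\le 7$, i.e. $|S_x|\le 14$, which is far too weak to close the count, so steps (i)--(iii) as proposed do not yield $|\overrightarrow{G}|\le 14$.
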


\begin{proof}
We know that $x$ and $y$ are either connected by a 2-dipath or by an arc. If $x$ and $y$ are adjacent then, as $ \overrightarrow{G}$ is triangulated, we have  $ | C | \geq 2$. If $x$ and $y$ are non-adjacent, then  $| C | \geq 1$. Hence it is enough to show that we cannot have $ | C | =1$ while $x$ and $y$ are non-adjacent.

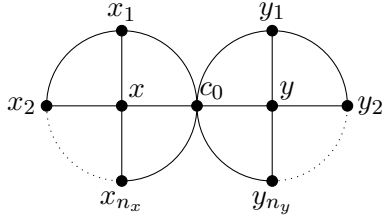
\begin{figure}

\centering
\begin{tikzpicture}

\filldraw [black] (1,1) circle (2pt) {};
\filldraw [black] (3,1) circle (2pt) {};

\filldraw [black] (2,1) circle (2pt) {};

\filldraw [black] (1,2) circle (2pt) {node[above]{$x_1$}};
\filldraw [black] (0,1) circle (2pt) {node[left]{$x_2$}};
\filldraw [black] (1,0) circle (2pt) {node[below]{$x_{n_x}$}};

\filldraw [black] (3,2) circle (2pt) {node[above]{$y_1$}};
\filldraw [black] (4,1) circle (2pt) {node[right]{$y_2$}};
\filldraw [black] (3,0) circle (2pt) {node[below]{$y_{n_y}$}};

\draw[-] (1,1) -- (2,1);
\draw[-] (1,1) -- (1,2);
\draw[-] (1,1) -- (0,1);
\draw[-] (1,1) -- (1,0);

\draw[-] (3,1) -- (2,1);
\draw[-] (3,1) -- (3,2);
\draw[-] (3,1) -- (4,1);
\draw[-] (3,1) -- (3,0);


\draw (1,0) arc (-90:180:1cm);
\draw[dotted] (0,1) arc (-180:-90:1cm);

\draw (4,1) arc (0:270:1cm);
\draw[dotted] (3,0) arc (-90:0:1cm);

\node at (1.2,1.2){$x$};
\node at (3.2,1.2){$y$};
\node at (2.2,1.2){$c_0$};

\end{tikzpicture}

\caption{For $|C|=1$ while $x$ and $y$ are non-adjacent}\label{c1}

\end{figure}

If $ | C | =1$ and $x$ and $y$ are non-adjacent, then the triangulation of $\overrightarrow{G}$ will force the  
configuration depicted in Figure~\ref{c1} as a subgraph of $und(\overrightarrow{G})$, where 
$C = \{ c_o \}$, $S_x = \{x_1, \dots, x_{n_x} \}$ and $S_y = \{y_1, \dots, y_{n_y} \}$. 
Without loss of generality, we may assume $| S_y | \geq | S_x | $. 
Then, by equation~(\ref{order}), we have
\begin{align}\nonumber
 n_y = | S_y | \geq \lceil (15-2-1) / 2 \rceil = 6. \nonumber
\end{align}
Clearly $n_x = | S_x |\geq 3$, as otherwise $ \{c_0,y\}$ would be a dominating set with at least two common neighbors 
$ \{y_1,y_{n_y} \}$, which contradicts the maximality of $D$. 

For $n_x=3$, we know that $c_0$ is not adjacent to $x_2$ as otherwise $ \{c_0,y\}$ would be a dominating set with at least two common neighbors $ \{y_1,y_{n_y} \}$,  contradicting the maximality of $D$. 
But then $x_2$ should be adjacent to $y_i$ for some $i \in \{1, \dots, n_y\}$, as otherwise $d(x_2,y) > 2$. 
Now the triangulation of $\overrightarrow{G}$ forces $x_2$ and $y_i$ to have at least two common neighbors. 
Also, $x_2$ cannot be adjacent to $y_j$ for any
$j \neq i$, as it would create a dominating set $ \{x_2,y \}$ with at least two common neighbors 
$ \{y_i,y_j \}$, contradicting the maximality of $D$. 
Hence, $x_2$ and $y_i$ are adjacent to both $x_1$ and $x_3$. 
  Note that $t_{\ell_t}$ and $t_{\ell_t +k}$ are adjacent if and only if $k=1$, as otherwise 
 $d(t_{\ell_t +1},t') > 2$ for $1 \leq \ell_t < \ell_t +k \leq n_t$.
 In this case, by equation~(\ref{order}), we have
\begin{align}\nonumber
 n_y = | S_y | \geq 15 - 2 - 1 - 3 =9. \nonumber
\end{align}

  Assume $i \geq 5$. 
  Hence, $c_0$ is adjacent to $y_j$ for all $j = 1, 2, 3$, as otherwise $d(y_j,x_3) > 2$. 
  This implies $d(y_2,x_2) > 2$, a contradiction. Similarly  $i < 5$ would also force  a contradiction.
  Hence $n_x \geq 4$.

 For $n_x = 4$, $c_0$ cannot be adjacent to both $x_3$ and $x_{n_x -2}=x_2$ as it would create a dominating set 
$ \{c_0,y \}$ with at least two common neighbors $ \{y_1,y_{n_y} \}$, contradicting the maximality of $D$. 
 For $n_x \geq 5$,  $c_0$ is adjacent to $x_3$ implies,  either for all $i \geq 3$ or for all $i \leq 3$, $x_i$ is adjacent to $c_0$, as otherwise $d(x_{i},y) > 2$. 
 Either of these cases would force  $c_0$ to become adjacent to $y_j$, as otherwise we would have either $d(x_{1},y_j) > 2$ or $d(x_{n_x},y_j) > 2$ for all 
 $j \in \{1, 2, \dots, n_y \} $. But then we would have a dominating set $ \{c_0,x \}$ with at least two common vertices,  contradicting the maximality of $D$. 
 Hence, for $n_x \geq 5$, $c_0$ is not adjacent to $x_3$. Similarly we can show, for $n_x \geq 5$, that $c_0$ is neither adjacent to  $x_3$ nor to $x_{n_x -2}$.
 
 So, for $n_x \geq 4$, we can assume without loss of generality that $c_0$ is not adjacent to $x_3$. We know that 
 $d(y_1,x_3) \leq 2$. We have already noted that 
 $t_{l_t}$ and $t_{l_t +k}$ are adjacent if and only if $k=1$ for any 
 $0 \leq l_t < l_t +k \leq n_t$. Hence, to have $d(y_1,x_3) \leq 2$, we must have one of the following edges: 
 $ y_1x_2$,   $ y_1x_3$,  $ y_1x_4$ or  $ y_2x_3$. 
 
 The first edge would imply the edges $ x_2y_j$ as otherwise $d(x_{1},y_j) > 2$ for all $j =3, 4, 5$. These three edges would then imply  $d(x_{4},y_3) > 2$. Hence we do not have 
 the edge $ y_1x_2$. 
 
 The other three edges, assuming we cannot have the edge $y_1x_2$,   would force the edges
 $x_2c_0$ and $x_1c_0$ for having $ d(x_2,y) \leq 2$ and $ d(x_1,y) \leq 2$. This  would imply $ d(x_1,y_4)>2$, a contradiction. Therefore, we cannot have the other three edges too.
 
 Hence we are done.
    \end{proof}

We now prove that, for $2 \leq |C| \leq 5$, at most one region of $ \overrightarrow{G}$ can be non-empty. Later, using this result, we will improve the lower bound of $|C|$.

\begin{lemma}\label{1region}
If $ 2 \leq | C | \leq 5$, then at most one region of $ \overrightarrow{G}$ is non-empty.
\end{lemma}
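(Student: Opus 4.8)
The plan is to argue by contradiction: suppose at least two regions of $\overrightarrow{G}$ are non-empty and fix two of them, say $R_a$ and $R_b$. Throughout I would use the planar embedding of $und(\overrightarrow{H})$ from Figure~\ref{undh}, extended to $und(\overrightarrow{G})$: a vertex of $S$ lying in a region $R_i$ can only be adjacent to $x$, to $y$, to the two boundary points of $R_i$, and to other vertices of $S$ lying in $R_i$; in particular, two vertices of $S$ lying in non-adjacent regions are themselves non-adjacent. Also, by equation~(\ref{order}) together with $|C|\le 5$, we have $|S|\ge 15-2-|C|\ge 8$, and, with respect to the region-adjacency relation, $R_0,\dots,R_{k-1}$ with $k=|C|$ form a cycle $R_0-R_1-\cdots-R_{k-1}-R_0$.

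First I would treat the case in which $R_a$ and $R_b$ are non-adjacent (possible only for $|C|\ge 4$). Pick $u\in R_a\cap S$ and $v\in R_b\cap S$; these are non-adjacent, hence joined by a $2$-dipath whose internal vertex is adjacent to both and therefore, as $R_a$ and $R_b$ share no boundary point, lies in $\{x,y\}$. It follows that $R_a\cap S$ and $R_b\cap S$ are both contained in $S_x$ or both in $S_y$ (otherwise no such internal vertex can exist); say both in $S_x$. Applying the same observation to two vertices of $R_a\cap S$ that agree on $x$ shows moreover that $R_a\cap S$ lies entirely in one of $S_x^+,S_x^-$ and $R_b\cap S$ in the other. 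If $S_y=\emptyset$ then $S=S_x$ with $|S_x|\ge 8$, and a short separate argument (each vertex of $S_x$ must still reach $y$, which is only possible through a vertex of $C$ or through $x$) gives a contradiction; if $S_y\ne\emptyset$, then every vertex of $S_y$ must lie in a region adjacent to both $R_a$ and $R_b$ (else the dichotomy above is violated for that region against $R_a$ or $R_b$), which pins down the list of non-empty regions — for $|C|=5$ it must be $\{R_a,R_c,R_b\}$ with $R_c$ the unique common neighbour of $R_a,R_b$ in the region-cycle, and for $|C|=4$ there are at most two candidates for such an $R_c$. A short case analysis over these configurations, using Lemmas~\ref{adjacent}, \ref{connection} and~\ref{3same}, rules them out.

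It then remains to handle the case where every two non-empty regions are adjacent; since $C_k$ with $2\le k\le 5$ contains three pairwise-adjacent regions only for $k=3$, this amounts to either exactly two adjacent non-empty regions, or $|C|=3$ with all three regions non-empty. Let $c$ be a boundary point common to two non-empty regions $R_a,R_b$; now a vertex of $R_a\cap S$ and a vertex of $R_b\cap S$ may additionally be joined by a $2$-dipath through $c$, so the previous paragraph's dichotomy no longer applies. Instead, for each non-empty region $R$ and each ``type'' $S_t^\alpha$ I would bound $|S_t^\alpha\cap R|$ from above: the vertices of a fixed type inside a region form a relative oriented clique whose members, by planarity and the orientation constraints around the boundary points, are forced to connect among themselves and across to the other region only in very restricted ways; Lemma~\ref{connection}, applied with $R$ together with its two neighbours in the region-cycle, in particular forces $|S_t^\alpha\cap R|\le 1$ once two suitable ``outgoing'' connections are present, and Lemma~\ref{3same} bounds several of the remaining quantities. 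Summing these caps over the at most three non-empty regions and the four types yields an upper bound on $|S|$ strictly below $13-|C|\le|S|$, a contradiction; the $|C|=3$ three-region subcase, and the extreme case $|C|=2$ in which the two regions share both their boundary points, are handled by the same counting with additional attention to the extra shared boundary points.

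The delicate point is exactly this last case of mutually adjacent non-empty regions: since the shared boundary point $c$ (and, for $|C|\le 3$, further shared points) provides an extra route for the connecting $2$-dipaths, one cannot simply say ``only $x$ and $y$ link the regions'' as in the non-adjacent case, and one must control the arc directions around $x$, $y$ and $c$ simultaneously. The counting estimates coming from Lemmas~\ref{3same} and~\ref{connection} are tight enough to close the gap only because the hypothesis $|C|\le 5$ forces $|S|$ to be as large as $8$, i.e.\ forces many $S$-vertices into few regions; for larger $|C|$ the inequality $|S|\ge 13-|C|$ becomes too weak to drive the argument, which is why the lemma is restricted to $|C|\le 5$.
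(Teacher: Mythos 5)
Your overall strategy (contradiction from two non-empty regions, splitting into non-adjacent versus mutually adjacent regions, then counting with Lemmas~\ref{adjacent}, \ref{connection} and~\ref{3same} against the bound $|S|\ge 13-|C|$ from~(\ref{order})) leaves a genuine gap: it never uses the $\prec$-maximality of the dominating set $D$, and without it the counting does not close. In the paper's proof almost every configuration is killed not by a cardinality cap but by exhibiting an alternative dominating pair of order~2 (such as $\{c_0,y\}$, $\{c_1,y\}$, $\{x_1,y\}$ or $\{c_0,c_1\}$) with more common neighbours than $D$, contradicting the choice of $D$; the triangulation of $\overrightarrow{G}$ (forced edges $c_ic_{i+1}$, forced common neighbours inside regions) is also needed to set these contradictions up. Your proposal mentions neither ingredient, and the purely metric tools you do invoke are too weak exactly where the hypothesis forces $|S|$ to be largest: for $|C|=2$ there are only two regions, so Lemma~\ref{connection} (which needs three distinct regions $R,R^1,R^2$) is vacuous, and Lemma~\ref{3same} by itself gives no bound on, say, $|S_x^\alpha\cap R_0|$ unless you first force all those vertices to agree on a common neighbour; yet here you need $|S|\le 10$ against $|S|\ge 11$. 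Similar shortfalls occur for $|C|=3$, where the paper needs $|S|\le 9$ and obtains it only through maximality-of-$D$ arguments, not through per-type caps of size $3$ or $4$ summed over regions.

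Beyond this, the two places where you write ``a short case analysis \dots rules them out'' and ``summing these caps \dots yields an upper bound on $|S|$ strictly below $13-|C|$'' are precisely where the bulk of the proof lives: the paper's argument runs through a careful hierarchy (first showing $S_x,S_y\ne\emptyset$, then that at most four, then at most three, then at most two of the $2k$ sets $S_t\cap R_i$ are non-empty, with several orientation-sensitive subcases for each value of $|C|$), and many of those subcases require tracking which vertices agree or disagree on $x$, $y$ and the boundary points simultaneously before a dominating-set swap or a planarity obstruction appears. Your first case (non-adjacent non-empty regions) is essentially a restatement of Lemma~\ref{adjacent}(a) and is fine as far as it goes, but the configurations it leaves (for $|C|=4,5$) and the whole mutually-adjacent case are asserted rather than proved, and, as explained above, the counting scheme you propose for them would not in fact yield the claimed contradiction without bringing in the maximality of $D$ and the triangulation.
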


\begin{proof}
 (For pictorial help, refer to Figure~\ref{undh}.) 
 If $|C| = 2$ and $x$ and $y$ are adjacent, then the region that contains the edge $xy$ is empty, as otherwise the triangulation of $\overrightarrow{G}$ 
 would force $x$ and $y$ to have a common neighbor other than $c_0$ and $c_1$. 
 So, for the rest of the proof, we can assume $x$ and $y$ are non-adjacent if $|C| =2$.

\medskip

\noindent\textbf{\textit{Step 0.}} We first  show that it is not possible to have either $S_x=\emptyset$ or $S_y= \emptyset$ and have at least two non-empty regions. 
Without loss of generality, assume that $S_x = \emptyset$. Then $x$ and $y$ are non-adjacent, as otherwise $y$ would be a dominating vertex which is not possible. 

For $|C|=2$, if both $S_y \cap R_0$ and $S_y \cap R_1$ are non-empty, then the triangulation of $\overrightarrow{G}$  forces either two parallel edges $c_0c_1$ (one in each region) or a common neighbor of $x$ and $y$ other than $c_0,c_1$, a contradiction.

For $|C|=3,4$ and $5$,  the triangulation of $\overrightarrow{G}$ implies the edges $c_0c_1$, $\dots$, $c_{k-2}c_{k-1}$ and $c_{k-1}c_0$. 
Hence every $v\in S_y$ must be connected to $x$ by a 2-dipath through $c_i$ for some $i \in \{1, 2, \dots, k-1 \}$. Now assume 
$|S^\alpha_y| \geq |S_y^{\overline{\alpha}}|$ for some $\alpha \in \{+,- \}$. 
Then, by equation~(\ref{order}), we have
\begin{align}\nonumber
|S^\alpha_y| \geq \lceil (15-2-5)/2 \rceil =4. \nonumber
\end{align}
By Lemma~\ref{adjacent}, we know that the vertices of $S_y^\alpha$ will be contained in two adjacent regions for $|C|=4,5$. For $|C|= 3$, $S^\alpha_y \cap R_i \neq \emptyset $ for all $i \in \{0,1,3 \}$ implies $|S^\alpha_y| \leq 3$ by Lemma~\ref{connection}. Hence, without loss of generality, we may assume $S_y^\alpha \subseteq R_1 \cup R_2$. If both $S_y^\alpha \cap R_1$ and 
$S_y^\alpha \cap R_2$ are non-empty then, by Lemma~\ref{connection}, each vertex of $S_y^\alpha \cap R_1$ disagrees with each 
vertex of $S_y^\alpha \cap R_2$ on $c_1$. But then, $\{c_1,y\}$ becomes a dominating set with at least six common neighbors (namely $c_0$, $c_2$, and four vertices from $S_y^\alpha$), 
which contradicts the maximality of $D$. 

Hence, all the vertices of $S_y^\alpha$ must be contained in one region, say $R_1$. Each of them should then be connected to $x$ by a 2-dipath with internal vertex either $c_0$ or $c_1$. 
However, the vertices that are  
connected to $x$ by a 2-dipath with internal vertex $c_0$ should have weak directed distance 
at most 2 with the vertices
connected to $x$ by a 2-dipath with internal vertex  $c_1$. But it is not possible to connect them 
unless they are all adjacent to either $c_0$ or $c_1$, in which case it would contradict the maximality of $D$. 

 Hence both $S_x$ and $S_y$ are non-empty. 

\medskip

\noindent\textbf{\textit{Step 1.}} We now prove that  at most four sets  out of the $2k$ sets $S_t \cap R_i $ can be non-empty, 
for all $t \in \{x,y \}$ and  $i \in \{0,1, \dots, k-1 \}$. 
It is immediate for  $|C| = 2$. 
For $|C|=4$ and $5$, the statement follows from Lemma~\ref{adjacent}. For $|C|=3$, we consider the following two cases: 

\begin{enumerate}[{\rm (i)}]

\item Assume   $S_t \cap R_i \neq \emptyset$ for all $t \in \{x,y \}$ and for all $i \in \{0,1,2 \}$. Then, by Lemma~\ref{connection}, we have  $|S_t \cap R_i| \leq 1$  for all $t \in \{x,y \}$ and for all $i \in \{0,1,2 \}$. By equation~(\ref{order}), we have
\begin{align}\nonumber
15 \leq | \overrightarrow{G}| = 2 + 3 + 4 = 9. \nonumber
\end{align}
This is a contradiction. 

\item Assume that five out of the six sets $S_t \cap R_i $ are non-empty and that the other one is empty, where $t \in \{x,y \}$ and  $i \in \{0,1,2 \}$. Without loss of generality, 
we can assume $S_x \cap R_0 = \emptyset $.
By Lemma~\ref{connection}, we have $|S_t \cap R_i| \leq 1$ for all 
$(t,i) \in \{(x,1), (x,2), (y,0) \}$. In particular, $|S_x| \leq 2$.

Now, all vertices of $S_t \cap R_i$ are adjacent to $c_1$ for $i \in \{1,2\}$, for being at weak directed distance at most 2 from each other, by Lemma~\ref{connection}. That means every vertex of $S_x$ is adjacent to $c_1$. Hence, there can be at most 
three vertices in $(S_y \cap R_1) \cup (S_y \cap R_2)$ as otherwise the dominating set $\{c_1,y\}$ would contradict the maximality of $D$.   Hence, $|S_y | \leq 4$.

Therefore, by equation~(\ref{order}) we have
\begin{align}\nonumber
 15 \leq | \overrightarrow{G}| = 2+3+(2+4) =11.\nonumber
 \end{align}
 This is a contradiction. 
\end{enumerate}

Hence,  at most four sets  out of the $2k$ sets $S_t \cap R_i $ can be non-empty, 
where $t \in \{x,y \}$ and  $i \in \{0,1, \dots, k-1 \}$.

\medskip

\noindent\textbf{\textit{Step 2.}} Assume now that exactly four sets out of the  sets $S_t \cap R_i $ are non-empty, for all $t \in \{x,y \}$ and  $i \in \{0, \dots, k-1 \}$. Without loss of generality, 
we have the following three cases (by Lemma~\ref{adjacent}): 

\begin{enumerate}[{\rm (i)}]

\item Assume the four non-empty sets are $S_x \cap R_1, S_y \cap R_0, S_y \cap R_1$ and $S_y \cap R_2$ (only possible for $|C|\geq 3$). The triangulation of $\overrightarrow{G}$
then forces the edges $c_0c_{k-1}$ and $c_1c_2$.
 Lemma~\ref{connection} implies that $S_x \cap R_1 = \{x_1 \}$ and that the vertices of $S_y \cap R_0$ and the vertices of $S_y \cap R_2$ disagree with $x_1$ on $c_0$ and $c_1$, respectively.

For $|C|=3$,  if every vertex from $S_y \cap R_1$ is adjacent to either $c_0$ or $c_1$, then $\{c_0,c_1 \}$ will be a dominating set with  at least four common neighbors $\{x,y,x_1,c_2 \}$, 
contradicting the maximality of $D$. 
If not, then the triangulation of $\overrightarrow{G}$ will force $x_1$ to be adjacent to at least two vertices, from $S_y$, say $y_1$ and $y_2$. 
But then, $\{x_1,y \}$ would be  a dominating set with at least four common  neighbors $\{y_1,y_2,c_0,c_1 \}$, contradicting the maximality of $D$.

For $|C| = 4$ and $5$, Lemma~\ref{adjacent} implies that vertices of $S_y \cap R_0$ and vertices of $S_y \cap R_2$ disagree with each other on $y$. 
Now, by Lemma~\ref{connection},  any vertex of $S_y \cap R_1$ is adjacent  either to $c_0$ (if it agrees with the vertices of $S_y \cap R_0$ on $y$) or to $c_1$ (if it agrees with the vertices of $S_y \cap R_2$ on $y$). 
Also, the vertices of $S_y \cap R_0$ and $S_y \cap R_2$ are connected to $x_1$ by a 2-dipath through $c_0$ and $c_1$ respectively. 
Hence, by Lemma~\ref{3same}, we have $|S_y \cap R_0|, |S_y \cap R_2| \leq 3$.

 Now, by equation~(\ref{order}), we have
 \begin{align}\nonumber
 |S_y| \geq (15-2-5-1) = 7.\nonumber
 \end{align}
  Hence, without loss of generality, at least four vertices $y_1,y_2,y_3,y_4$ of $S_y$ are adjacent to $c_0$. But, in that case, $\{c_0,y \}$ is a dominating set with at 
  least five common neighbors $\{y_1,y_2,y_3,y_4,c_{k-1} \}$, contradicting the maximality of $D$ for $|C|=4$.

  For $|C| = 5$, each vertex of
 $S_y \cap R_1$ disagrees with $c_3$ on $y$ by Lemma~\ref{adjacent} and therefore, without loss of generality,  all of them are adjacent to  $c_0$. 
 Then, vertices of $S_y \cap R_1$ disagrees with 
vertices of  $S_y \cap R_2$ on $y$ as well. This implies vertices of 
$S_y \cap R_2$ agrees with $c_3$ on $y$ and must be connected to $c_3$ by 2-dipaths with internal vertex $c_2$.
 Now, by Lemma~\ref{connection},  $|S_y \cap R_2| \leq 1$. So, $|S_y| \geq  7$ implies $|S_y \cap (R_0 \cup R_1)| \geq 6$. But every vertex of $S_y \cap (R_0 \cup R_1)$ are adjacent to $c_0$. 
 In that case, $\{c_0,y \}$ is a dominating set with at least six common neighbors,  contradicting the maximality of $D$ for $|C|=5$.

\item Assume the four non-empty sets are $S_x \cap R_0, S_x \cap R_1, S_y \cap R_0$ and $S_y \cap R_1$.  
 For $|C| =2$, every vertex in $S$ is adjacent  either to $c_0$ or to $c_1$ (by Lemma~\ref{connection}). 
So, $\{ c_0,c_1 \}$ is a dominating set. Hence, no vertex $w\in S$ can be adjacent to both $c_0$ and $c_1$ since otherwise 
$\{c_0,c_1 \}$ would be a dominating set with at least three common neighbors $\{x,y,w \}$, contradicting the maximality of $D$. 
By equation~(\ref{order}), we have
\begin{align}\nonumber
| S | \geq 15-2-2 = 11.\nonumber
\end{align}
Hence, without loss of generality, we may assume  $| S_x \cap R_0 | \geq 3$. Suppose $\{x_1,x_2,x_3 \} \subseteq S_x \cap R_0$.
 In that case, all vertices of $S_x \cap R_0$  must be adjacent to $c_0$ (or to $c_1$), as otherwise it would force all  vertices  of 
 $S_y \cap R_1$ to be adjacent to both $c_0$ and $c_1$ (by Lemma~\ref{connection}). Without loss of generality, assume that all vertices of $S_x \cap R_0$ are adjacent to $c_0$. 
 Then, all vertices
$w\in S_y$ will be adjacent to $c_0$, as otherwise $d(w,x_i) >2$, for some $i \in \{1, 2, 3 \}$. But then
$\{c_0, x \}$ would be a dominating set with at least three common vertices $\{x_1, x_2, x_3 \}$, contradicting the maximality of $D$.

For $|C|= 3,4$, every vertex of $S$ will be adjacent to $c_0$ (by Lemma~\ref{connection}). By 
equation~(\ref{order}), we have
\begin{align}\nonumber
|S| \geq (15-2-4) = 9.\nonumber
\end{align}
 Hence, without loss of generality, $|S_x| \geq 5$. In that case, $\{c_o,x \}$ is a dominating set with at least five common neighbors $S_x \cup \{y \}$, 
 contradicting the maximality of $D$ for $|C|=3,4$.

For $|C|=5$, every vertex of $S_t \cap R_i $ disagrees with $c_{i+2}$ on $t$ and, therefore, 
$|S_t \cap R_i| \leq 3 $ for  $i \in \{0,1 \}$ 
by Lemma~\ref{adjacent}.  Assume $|S_x \cap R_0| = 3$ and $S_x \cap R_0 = \{x_1,x_2,x_3 \}$.
Moreover, assume without loss of generality that $c_2 \in N^\alpha(x)$. In that case, we must have 
$\{x_1,x_2,x_3 \} \subseteq N^{\overline{\alpha}}(x)$. 

Note that $x_1$, $x_2$ and $x_3$ must agree on $c_0$ in order to be at weak directed distance at most 2 with the vertices of $S_y \cap R_1$. 
Further, assume 
that $\{x_1,x_2,x_3 \} \subseteq N^\beta(c_0)$. 
But then, as all the three vertices 
$\{x_1,x_2,x_3 \}$ are adjacent to both $x$ and $c_0$, the only way each of them can be 
at weak directed distance 2 from $c_3$ is through a 2-dipath with internal vertex $x$. 
Hence, we have  $c_3 \in N^\alpha(x)$. 
This implies  $x_4 \in N^{\overline{\alpha}}(x)$ for any vertex $x_4 \in S_x \cap R_1$. 
But then, the vertices of  $S_x \cap R_1$ must disagree with vertices of $S_x \cap R_0$ on $c_0$,
making it impossible for the vertices of $S_y \cap R_0$ to be at weak directed distance at most 2 from 
$x_1,x_2,x_3$ and  from the
vertices of $S_x \cap R_1$. Therefore, we must have  $|S_x \cap R_0| \leq 2$.

Similarly, we can prove $|S_t \cap R_i| \leq 2$ for  $i \in \{0,1 \}$.

We now show that it is not possible to 
have $|S_t \cap R_i| = 2$ for all $(t,i) \in \{x,y\} \times \{0,1 \}$.
Suppose on the contrary that this is the case. 
Then, clearly, the vertices of $S_t \cap R_i$ disagree with $c_{i+2}$ and $c_{i+3}$ on $t$.
Hence, the vertices of $S_t \cap R_0$ agree with the vertices of $S_t \cap R_1$ on $t$.
Therefore, the vertices of $S_t \cap R_0$ must disagree with the 
vertices of $S_t \cap R_1$ on $c_0$.

Then it will not be possible to have both the vertices of $S_x \cap R_0$ at weak directed distance at most 2 from all the four vertices of $S_y$. 

Therefore, we have $|S| \leq 7$. Hence, by 
equation~(\ref{order}), we have
\begin{align}\nonumber
15 \leq | \overrightarrow{G}| \leq 2+5+7 =14.\nonumber
\end{align}
This is a contradiction and we are done.

\item Assume the four non-empty sets are $S_x \cap R_1, S_x \cap R_2, S_y \cap R_0$ and $S_y \cap R_1$ (only possible for $|C|= 3$). 
In that case, Lemma~\ref{connection} implies that every vertex of $(S_x\cap R_1) \cup (S_y\cap R_0)$ is adjacent to $c_0$ and  that
 every vertex of $(S_x\cap R_2) \cup (S_y\cap R_1)$ is adjacent to $c_1$.

  Moreover, the triangulation of $\overrightarrow{G}$ forces the edges $c_0c_2$ and $c_1c_2$. It also forces some vertex $v_1 \in S_y \cap R_1$ to be adjacent to $c_0$. 
  But this would create the dominating set $\{c_0,c_1 \}$ with at least four common neighbors $\{x,y,v_1,c_2 \}$ contradicting the maximality of $D$. 
  
\end{enumerate}

  Hence  at most three sets  out of the $2k$ sets $S_t \cap R_i $ can be non-empty, 
where $t \in \{x,y \}$ and  $i \in \{0,1, \dots, k-1 \}$. 

\medskip

\noindent\textbf{\textit{Step 3.}} Now assume that exactly three sets out of the  sets $S_t \cap R_i $ are non-empty, where $t \in \{x,y \}$ and  $i \in \{0, \dots, k-1 \}$. Without loss of generality we have the following two cases (by Lemma~\ref{adjacent}): 

\begin{enumerate}[{\rm (i)}]

\item Assume the three non-empty sets are $S_x \cap R_0, S_y \cap R_0$ and $S_y \cap R_1$.  The triangulation of $\overrightarrow{G}$  implies that the edge $ c_0c_1$ lies inside the region $R_1$.  

\medskip

For $|C|=2$, there exists 
$u \in S_y \cup R_1$ such that $u$ is adjacent to both $c_0$ and $c_1$, by the triangulation of $\overrightarrow{G}$. 
Now,  if $|S_y \cup R_1| \geq 2$, then some other vertex $v \in S_y \cup R_1$
must be adjacent to either $c_0$ or $c_1$. Without loss of generality, we may assume that $v$ is adjacent to $c_0$. Then, every vertex $w \in S_x \cap R_0$ will be adjacent to $c_0$,
in order to have $d(v,w) \leq 2$. But in that case
$\{c_0, y \}$ would be a dominating set with at least three common neighbors $\{c_1, u, v \}$, contradicting the maximality of $D$.

So we must have $|S_y \cup R_1|= 1$. 
Assume that $S_y \cup R_1 = \{u \}$. 
Then, any vertex $w \in S_x \cap R_0$ is adjacent to either $c_0$ or $c_1$. If $|S_x | \geq 5$ then, without loss of generality, 
we can assume that at least three  vertices of $S_x$ are adjacent to $c_0$. Now, to have weak directed distance at most 2 from all those three vertices, every vertex of $S_y$ must be adjacent to $c_0$. 
This would create the dominating set $\{c_0, x \}$ with at least three common neighbors, contradicting the maximality of $D$.

 Also $|S_x | = 1$ clearly creates  the dominating set $\{c_0, y \}$ (as $x_1$ is adjacent to $c_0$ by the triangulation of $\overrightarrow{G}$) with at least three common neighbors (a vertex  from $S_y \cap R_0$ by the triangulation of $\overrightarrow{G}$, $u$ and $c_1$), contradicting the maximality of $D$.

For 
$2 \leq |S_x | \leq 4$, 
$c_0$ (or $c_1$) can be adjacent to at most two vertices of $S_y \cap R_0$  since otherwise there would be one vertex $v \in S_y \cap R_0$ which would force $c_0$ (or $c_1$) to be adjacent to all vertices of $w\in S_x$ (in order to satisfy $d(v,w) \leq 2$) and create a dominating set $\{c_0,y \}$ that contradicts the maximality of $D$.

 Also, not all vertices of $S_x$ can be adjacent to $c_0$ (or $c_1$), as otherwise $\{c_o,y \}$ (or $\{c_1,y \}$) would be a dominating set with at least three common neighbors ($u$, $c_1$ (or $c_0$) and a vertex  from $S_y \cap R_0$), contradicting the maximality of $D$.

 Note that, by equation~(\ref{order}), we have
 \begin{align}\nonumber
 |S_y \cap R_0| \geq 10 - S_x.\nonumber
 \end{align}
  Assume $S_x= \{x_1,\dots,x_n \}$, with the triangulation of $\overrightarrow{G}$ forcing the edges $c_0x_1$, $x_1x_2$, $\dots$, $x_{n-1}x_n$ and $x_nc_1$ for $n \in \{2,3,4 \}$.

 For $|S_x| = 2$, at most four vertices of $S_y \cap R_0$ can be adjacent to $c_0 $ or $c_1$. Hence, there will be at least four vertices of $S_y \cap R_0$ each connected to $x$ by a 2-dipath through $x_1$ or $x_2$. Without loss of generality , $x_1$ will be adjacent to at least 2 vertices of $S_y$, and hence $\{x_1,y \}$ will be a dominating set contradicting the maximality of $D$.

 For $|S_x| = 3$, without loss of generality, assume that $x_2$ is adjacent to $c_0$. To satisfy 
 $\overline{d}(x_1,v) \leq 2$ for all $v \in S_y \cap R_0$, at least four vertices 
 of $S_y$ will be  connected to $x_1$ by a 2-dipath through $x_2$ (as, according to previous discussions, at most two vertices of 
 $S_y$ can be adjacent to $c_0$). This would create the 
 dominating set $\{x_2,y \}$, contradicting the maximality of $D$.

 For $|S_x| = 4$ we have the edges $x_2c_0$ and $x_3c_1$, as otherwise at least three vertices of $S_x$ would be adjacent to either $c_0$ or  $c_1$, which is not possible (because it forces all vertices of $S_y$ to be adjacent to $c_0$ or $c_1$). Now, each vertex  $v \in S_y \cap R_0$ must be adjacent  either to $c_0$ or to $x_2$ (to satisfy $\overline{d}(v,x_1) \leq 2$) and also either to $c_1$ or to $x_3$ 
 (to satisfy $\overline{d}(v,x_4) \leq 2$), which is not possible due to the planarity of $\overrightarrow{G}$.

 For $|C| =3,4,5$, by Lemma~\ref{connection}, each vertex of $ S_x$ disagrees with each vertex of $S_y \cap R_1$ on $c_0$. 
 We also have the edge $x_1c_2$ for some $x_1 \in S_x$ by the triangulation of $\overrightarrow{G}$. By equation~(\ref{order}), we have
 \begin{align}\nonumber
 |S| \geq (15-2-|C|) = 13-|C|.\nonumber
 \end{align}
Hence, $|S_x|\leq 2$ for $|C|=3,4$, as otherwise every vertex $u \in S_y$ would be adjacent to 
$c_0$, creating a  dominating set 
$\{c_0,t \}$  with at least $(|C|+1)$ common neighbors $S_t \cup \{c_1 \}$ for some $t \in \{x,y \}$, 
contradicting the maximality of $D$. For $|C|=5$, since all the vertices in $S_x \cap R_0$ agree with each other
on $x$ (as they all must disagree  with $c_2$ on $x$) and on $c_0$ (as they all disagree with 
vertices of $S_y \cap R_1$ on $c_0$), 
 by Lemma~\ref{3same}, we have $|S_x \cap R_0| \leq 3$. But 
if $|S_x \cap R_0| = 3$ then every vertex of $S_y$ will be adjacent to 
$c_0$, creating a  dominating set 
$\{c_0,y \}$  with at least six common neighbors $S_y \cup \{c_1 \}$,
contradicting the maximality of $D$.

Hence  $|S_x|\leq 2$ for $|C|=3,4,5$.

\medskip

Now for $|C|=3$, we can assume that $x$ and $y$ are non-adjacent as otherwise $\{c_0,y \} $ would be a dominating set with at least four common neighbors 
($x$, $c_1$ and two other vertices each from the sets $S_y \cap R_0$, $S_y \cap R_1$ by triangulation) contradicting the maximality of $D$. Hence triangulation will imply the edge $c_1c_2$. Now for $|S_x|\leq 2$,  either  $\{c_0,c_2 \}$ is a dominating set  with at least four common neighbors $\{x,y,c_1,x_1 \}$  contradicting the maximality of $D$ or  $x_1$ is adjacent to at least two vertices  $y_1,y_2 \in S_y \cap R_0$ creating a dominating set $\{x_1,y \}$ 
(the other vertex in $S_x$  must be adjacent to $x_1$ by triangulation) 
with at least four common neighbors $\{y_1,y_2,c_0,c_2 \}$ contradicting the maximality of $D$.

For $|C| = 4$ we have $|S_y \cap R_1| \leq 2$ as otherwise we will have the dominating set $\{c_0,y \}$ with at least 
five common neighbors ($c_1$, vertices of $S_y \cap R_1$ and one vertex of $S_y \cap R_0$ by the triangulation of $\overrightarrow{G}$), contradicting the maximality of $D$. 
By equation~(\ref{order}), we have
\begin{align}\nonumber 
|S_y \cap R_0| &\geq (15 -|D| - |C|- |S_x| - |S_y \cap R_1|)\\ \nonumber
 &\geq (15-2-4-2-2) = 5.\nonumber
\end{align}
Now, at most two vertices of $S_y \cap R_0$ can be adjacent to $c_0$ as otherwise $\{c_0,y \}$ would be a dominating set with at least 
five common neighbors 
($c_1$, vertices of $S_y \cap R_0$ and one vertex of $S_y \cap R_1$ by the triangulation of $\overrightarrow{G}$),
 contradicting the maximality of $D$.

 Also, by the triangulation of $\overrightarrow{G}$, in $R_3$ we  have either the edge $xy$ or  the edge $c_2c_3$.
 But, if we have the edge $xy$, then $|S_y \cap  R_1| =1$ as otherwise the dominating set $\{c_0,y\}$ would contradict the maximality of $D$. Hence, by the triangulation of $\overrightarrow{G}$, 
 and in order to have weak directed distance at most 2 from the vertices of $S_x \cup \{x\}$,
  each vertex of $S_y \cap R_0$ will be adjacent either to $c_3$ or to $x_1$. This will create a dominating set
  $\{x_1,y\}$ or $\{c_3,y\}$ that contradicts the maximality of $D$. Hence, we do not have the edge $xy$ (not even in other regions) and we thus have the edge $c_2c_3$.
   
For $|S_x| \leq 2$, the vertices of $S_y \cap R_0$ will be adjacent to either $c_3$, $c_0$ or $x_1$ in order to 
have weak directed distance at most 2 from $x$. 
But then, the triangulation of $\overrightarrow{G}$ will force at least two vertices of $S_y \cap R_0$ to be  common neighbors of   $c_3$ and $x_1$,
 or to have the edge $c_0c_3$. 
It is not difficult to check, casewise (drawing a picture for individual cases will help in understanding the scenario), that
one of the sets $\{c_0,y\}$, $\{c_3,y\}$ or $\{x_1,y\}$ would then be a dominating set contradicting the maximality of $D$.

For $|C| = 5$, by Lemma~\ref{adjacent}, each vertex of $S_y \cap R_i$ must disagree with $c_{i+2}$ on $y$. If the vertices of  
$S_y \cap R_0$ and the vertices of $S_y \cap R_1$ agree with  each other on $y$, then they must disagree with each other on $c_0$,
 which implies $|S_y \cap R_i| \leq 3$ for all $i \in \{0,1 \}$. 
 If the vertices of  $S_y \cap R_0$ and the vertices of $S_y \cap R_1$
  disagree with  each other on $y$, then the vertices of $S_y \cap R_i$  must agree with  $c_{3-i}$ on $y$. 
  In that case, by Lemma~\ref{connection}, each
vertex of $S_y \cap R_i$  must be connected to  $c_{3-i}$ by a 2-dipath through $c_{4-3i}$, which 
implies $|S_y \cap R_i| \leq 3$ 
for all $i \in \{0,1 \}$. 

Assume $|S_y \cap R_0| = 3$ and $|S_y \cap R_1| = 3$.
Then, each vertex of $S_y \cap R_i$ must disagree with both $c_{i+2}$ and $c_{i+3}$ on $y$. 
This would imply that the vertices of  $S_y \cap R_0$ and the vertices of $S_y \cap R_1$
  disagree with  each other on $c_0$. Now, there would be no way to have weak directed distance at most 2 between a vertex of $S_x$ and all the six vertices of $S_y$. 
  
  Hence we must have $|S_y| \leq 5$.
Then, by equation~(\ref{order}), we have
\begin{align} \nonumber
15 \leq | \overrightarrow{G}| \leq 2+5+(2+5) = 14.\nonumber
\end{align}
This is a contradiction, and this concludes this particular subcase.

\item Assume the three non-empty sets are $S_x \cap R_1$, $S_y \cap R_0$ and $S_y \cap R_2$ (only possible for $|C| \geq 3$).  
By Lemma~\ref{connection}, we have  $S_x = \{x_1 \}$ and the fact that each vertex of $S_y \cap R_i$ disagrees with $c_{i^2/4}$ on $x_1$ for $i \in \{0,2 \}$.  
Moreover, the triangulation of $\overrightarrow{G}$ implies the edges $x_1c_0$, $x_1c_1$, $ c_{k-1}c_0$, $ c_{0}c_1$ and $ c_{1}c_2$.

 For $|C|=3$, $\{c_0,c_1 \}$ is a dominating set with at least four common neighbors $\{x,y,c_2,x_1 \}$, contradicting the maximality of $D$. 
 For $|C| =4,5$, every vertex of $S_y \cap R_0$ disagrees with every vertex of $S_y \cap R_2$ on $y$. 
 Hence, by Lemma~\ref{3same}, we have  $|S_y \cap R_i| \leq 3 $ for all $i \in \{0,2 \}$. 
 By equation~(\ref{order}), we then have 
 \begin{align}\nonumber
 15 \leq | \overrightarrow{G}| &= |D| + |C| + |S| \\ \nonumber
 &\leq [2+5+ (1+3+3)] = 14.\nonumber
 \end{align}
 This is a contradiction. 
 \end{enumerate}
 
\medskip

\noindent\textbf{\textit{Step 4.}} Hence,  at most two sets  out of the $2k$ sets $S_t \cap R_i $ can be non-empty, 
where $t \in \{x,y \}$ and  $i \in \{0,1, \dots, k-1 \}$.

Assume that exactly two sets out of the  sets $S_t \cap R_i $ are non-empty, where $t \in \{x,y \}$ and  $i \in \{0, \dots, k-1 \}$, yet there are two non-empty regions. 
Without loss of generality, assume that the two non-empty sets are $S_x \cap R_0$ and $S_y \cap R_1$.  

The triangulation of $\overrightarrow{G}$ would  force $x$ and $y$ to have a common neighbor other than $c_0$ and $c_1$ for $|C|=2$ which is a contradiction. 
For $|C|=3,4,5$ the triangulation of $\overrightarrow{G}$ forces the edges $c_{k-1}c_0$ and $c_0c_1$. By Lemma~\ref{connection}, we know that each vertex of $S$ is adjacent 
 to $c_0$. By equation~(\ref{order}), we have
 \begin{align}\nonumber
 |S| \geq (15-2-5) = 8.\nonumber
 \end{align}
  Hence, without loss of generality, we may assume $|S_x| \geq 4$. But then 
 $\{c_0,x \}$ would be a dominating set with at least six common neighbors $S_x \cup \{c_{k-1},c_1 \}$, contradicting the maximality of $D$. 

 \medskip

 This concludes the proof.
   \end{proof}
 
 The lemma proved above was one of the key steps to prove the theorem. Now we will 
 improve the lower bound on $|C|$.

 \begin{lemma}\label{ge6}
$| C | \geq 6$.
\end{lemma}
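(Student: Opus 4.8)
The plan is to argue by contradiction: assume $2 \le |C| \le 5$ and derive a contradiction with $|\overrightarrow{G}| \ge 15$, using Lemma~\ref{1region} as the main lever. By Lemma~\ref{1region}, at most one region of $\overrightarrow{G}$ is non-empty, so all vertices of $S$ lie inside a single region, say $R_0$ (the unbounded face) after relabelling. The triangulation of $\overrightarrow{G}$ then forces the edges $c_0c_1, c_1c_2, \dots, c_{k-2}c_{k-1}$ (the "inner" chords bounding the empty regions $R_1, \dots, R_{k-1}$), so that inside $R_0$ we are left to place all of $S_x \cup S_y$ with only $c_0$ and $c_{k-1}$ available as boundary points. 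By equation~(\ref{order}) we have $|S| = |\overrightarrow{G}| - |D| - |C| \ge 15 - 2 - 5 = 8$, so there is quite a lot to fit into one region.

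The next step is to control $|S_x|$ and $|S_y|$ separately. First I would handle the degenerate cases $S_x = \emptyset$ or $S_y = \emptyset$: if, say, $S_x = \emptyset$, then every vertex of $S_y \subseteq R_0$ must reach $x$ by a $2$-dipath, and since inside $R_0$ only $c_0$ and $c_{k-1}$ are adjacent to $x$ among the "outer" vertices, each vertex of $S_y$ is adjacent to $c_0$ or $c_{k-1}$; pigeonholing $|S_y| \ge 8$ into these two internal vertices and invoking Lemma~\ref{3same} (or the maximality of $D$ via a dominating set $\{c_0, y\}$ or $\{c_{k-1}, y\}$ with too many common neighbours) gives a contradiction. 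So both $S_x$ and $S_y$ are non-empty. Then, as in the proof of Lemma~\ref{1region}, the triangulation forces a "fan" structure inside $R_0$: an ordering $x_1, \dots, x_{n_x}$ of $S_x$ and $y_1, \dots, y_{n_y}$ of $S_y$ with consecutive vertices adjacent, $c_0$ adjacent to the first vertex on each side and $c_{k-1}$ to the last, and the two fans meeting; moreover $t_{\ell}$ and $t_{\ell + k}$ are adjacent only for $k = 1$, exactly the structural fact already used repeatedly. Using Lemma~\ref{adjacent} and Lemma~\ref{connection} I would then bound how many vertices of each $S_t$ can be adjacent to $c_0$ or $c_{k-1}$ (at most two or three, else $\{c_0, t\}$ or $\{c_{k-1}, t\}$ violates the maximality of $D$), and bound the number of vertices reachable along each fan through a fixed internal vertex.

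The core counting argument is then: each vertex of $S$ must be at weak directed distance $\le 2$ from both $x$ and $y$ and from every other vertex of $S$, and inside a single region the only way to connect two vertices of $S$ that are far apart in the fan ordering is through a common neighbour among $\{c_0, c_{k-1}\}$ or one of the $c_i$'s (which are not adjacent to $S$-vertices except at the two ends). This forces almost all of $S$ to be adjacent to $c_0$ or to $c_{k-1}$; but then $\{c_0, x\}$, $\{c_0, y\}$, $\{c_{k-1}, x\}$ or $\{c_{k-1}, y\}$ becomes a dominating set of order $2$ with strictly more common neighbours than $D$, contradicting the maximality of $D$ with respect to $\prec$. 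One has to treat the cases $|C| = 2, 3, 4, 5$ with slightly different accounting — for $|C| = 5$ one additionally exploits, via Lemma~\ref{adjacent}, that a vertex of $S_t \cap R_0$ must disagree with a far-away $c_i$ on $t$, which caps $|S_t|$ at $3$ on each side and makes $|S| \le 6 < 8$ outright. I expect the main obstacle to be the bookkeeping in the $|C| = 2$ and $|C| = 3$ subcases, where $c_0$ and $c_{k-1}$ coincide or are adjacent and one cannot immediately cap $|S_t|$ by a disagreement argument; there one must carefully combine the fan structure with planarity (two vertices on opposite ends of the two fans cannot be joined by a $2$-dipath without crossing) to force enough incidences with $c_0$, and then close with the maximality of $D$. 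This is the same style of argument as Steps~0--4 of Lemma~\ref{1region}, just applied to the single surviving region, so it should go through.
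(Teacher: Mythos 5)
Your setup is the same as the paper's: Lemma~\ref{1region} confines $S$ to a single region, equation~(\ref{order}) gives $|S|\geq 13-|C|\geq 8$, the case $S_x=\emptyset$ (or $S_y=\emptyset$) is killed by forcing every vertex of $S_y$ onto a 2-dipath through one of the two boundary vertices and then exhibiting a dominating pair with more than $|C|$ common neighbours, and the rest is a fan-structure analysis inside the one non-empty region. That much is faithful. But the step you rely on to dispose of the supposedly easy case $|C|=5$ is wrong. With the non-empty region having boundary points $c_0,c_1$, the only vertex of $C$ that a vertex of $S_t$ is \emph{forced} to reach through $t$ is the antipodal one ($c_3$); the vertices $c_2$ and $c_4$ can instead be reached by 2-dipaths through $c_1$ or $c_0$. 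And even granting that all of $S_t$ disagrees with some far-away $c_i$ on $t$, this only says the vertices of $S_t$ pairwise agree on the single vertex $t$; Lemma~\ref{3same} gives a bound of $3$ only for sets of the form $N^\alpha(u)\cap N^\beta(v)$ with $\overrightarrow{uv}$ an arc, i.e.\ it needs a common orientation pattern on \emph{two} adjacent vertices. A single in- or out-neighbourhood can be large (it is $7$ in the extremal planar oclique), so no cap $|S_t|\leq 3$, and hence no quick $|S|\leq 6<8$, follows. This is precisely why the paper's Claim~1 for $|C|=5$ is a long argument: it must additionally rule out the edge $c_0c_1$, the chords $y_1y_4$ and $y_1y_3$, and 2-dipaths between far-apart $S_y$-vertices with internal vertex in $S_x$, each time closing with a dominating-set/maximality contradiction. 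Contrary to your expectation, $|C|=5$ and $|C|=4$ are the cases the paper has to treat in full detail, not the ones that fall out "outright".

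Two further points need repair. First, the triangulation does not force all the chords $c_{i-1}c_i$ of the empty regions: each empty quadrilateral $xc_{i-1}yc_i$ is triangulated either by $c_{i-1}c_i$ or by the edge $xy$, and $xy$ need not be excludable (it is excluded only in special situations, e.g.\ when $S_x=\emptyset$, where it would make $y$ dominating); the paper tracks this possibility explicitly. Second, your core counting step --- "connectivity forces almost all of $S$ to be adjacent to $c_0$ or $c_{k-1}$, so some pair $\{c_0,t\}$ or $\{c_{k-1},t\}$ beats $D$" --- is asserted rather than proved. To contradict the maximality of $D$ with respect to $\prec$ one must verify both that the new pair actually dominates $\overrightarrow{G}$ and that it has strictly more than $|C|$ common neighbours (at least $6$ when $|C|=5$), and arranging these counts while respecting planarity and the weak-distance-2 requirements among the up-to-$8$ vertices of $S$ is exactly the case analysis (Claims 1--4 in the paper) that your sketch leaves out. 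As it stands the proposal is a plausible outline of the paper's strategy with the decisive quantitative steps either missing or incorrect.
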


\begin{proof}
For $ | C | =2,3,4,5$, without loss of generality by Lemma~\ref{1region}, we may assume $R_1$ to be the only non-empty region.  
The triangulation of $\overrightarrow{G}$ will then force the  
configuration depicted in Figure~\ref{single} as a subgraph of $und( \overrightarrow{G})$, 
where 
$C = \{ c_o, \dots, c_{k-1} \}$, $S_x = \{x_1, \dots, x_{n_x} \}$ and $S_y = \{y_1, \dots, y_{n_y} \}$. 
Without loss of generality, we may assume
\begin{align}\nonumber
| S_y |= n_y \geq n_x =| S_x |.
\end{align}
Then, by equation~(\ref{order}), we have
\begin{equation}\label{choto eqn}
n_y = | S_y | \geq (15-2-|C| - |S_x|) = 13 - |C| - |S_x|.
\end{equation}

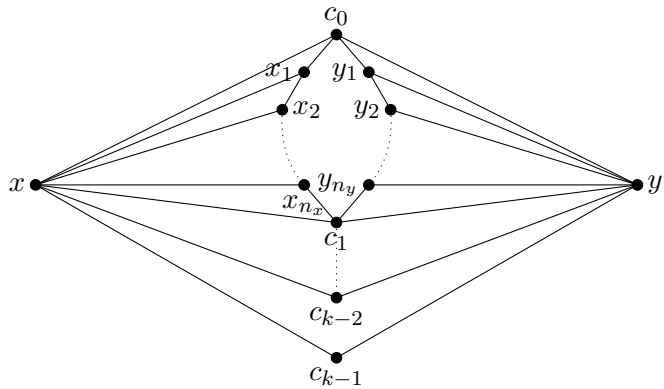
\begin{figure}

\centering
\begin{tikzpicture}

\filldraw [black] (0,2) circle (2pt) {node[left]{$x$}};
\filldraw [black] (8,2) circle (2pt) {node[right]{$y$}};

\filldraw [black] (4,4) circle (2pt) {node[above]{$c_0$}};

\filldraw [black] (4,1.5) circle (2pt) {node[below]{$c_1$}};

\filldraw [black] (4,.5) circle (2pt) {node[below]{$c_{k-2}$}};

\filldraw [black] (4,-.3) circle (2pt) {node[below]{$c_{k-1}$}};


\draw[dotted] (3.28,3) .. controls (3.25,2.7) and (3.3,2.3) .. (3.57,2);

\draw[dotted] (4.72,3) .. controls (4.75,2.7) and (4.7,2.3) .. (4.43,2);


\filldraw [black] (3.57,3.5) circle (2pt) {node[left]{$x_1$}};
\filldraw [black] (3.28,3) circle (2pt) {node[right]{$x_2$}};
\filldraw [black] (3.57,2) circle (2pt) {node[below]{$x_{n_x}$}};

\filldraw [black] (4.43,3.5) circle (2pt) {node[left]{$y_1$}};
\filldraw [black] (4.72,3) circle (2pt) {node[left]{$y_2$}};
\filldraw [black] (4.43,2) circle (2pt) {node[left]{$y_{n_y}$}};

\draw[-] (4,4) -- (3.57,3.5);
\draw[-] (3.28,3) -- (3.57,3.5);
\draw[-] (3.57,2) -- (4,1.5);

\draw[-] (4,4) -- (4.43,3.5);
\draw[-] (4.43,3.5) -- (4.72,3);
\draw[-] (4.43,2) -- (4,1.5);

\draw[-] (0,2) -- (3.57,3.5);
\draw[-] (0,2) -- (3.28,3);
\draw[-] (0,2) -- (3.57,2);

\draw[-] (8,2) -- (4.43,3.5);
\draw[-] (8,2) -- (4.72,3);
\draw[-] (8,2) -- (4.43,2);


\draw[-] (0,2) -- (4,-.3);

\draw[-] (0,2) -- (4,4);
\draw[-] (0,2) -- (4,1.5);

\draw[-] (0,2) -- (4,.5);

\draw[-] (8,2) -- (4,-.3);

\draw[-] (8,2) -- (4,4);
\draw[-] (8,2) -- (4,1.5);

\draw[-] (8,2) -- (4,.5);

\draw[dotted] (4,1.5) -- (4,.5);

\end{tikzpicture}

\caption{The only non-empty region is $R_1$}\label{single}

\end{figure}

\medskip

 First of all, assume $n_x = 0$. Then $x$ is not adjacent to $y$, as otherwise $y$ would dominate the whole graph. 
 So we have the edges $c_0c_1$, $c_1c_2$, $\dots$, $c_{k-1}c_0$ by the triangulation of $\overrightarrow{G}$.
Then, by  equation~\ref{choto eqn}, we have
\begin{align}\nonumber
|S_y| \geq 13 - 5 = 8.
\end{align}

 Now, to have $\overline{d}(x,y_i) \leq 2$, every $y_i$ must be connected to $x$ by a 2-dipath with internal vertex either $c_0$ or $c_1$. 
 Hence, at least four vertices of $S_y$ must be adjacent to either $c_0$ or $c_1$. 
 Note that $c_0$ is also adjacent to $c_{k-1},c_1$ and that $c_1$ is also adjacent to 
$c_0,c_2$. But then, the dominating set $\{c_0,y\}$ or $\{c_1,y\}$ will contradict the maximality of 
$D$. 
Hence $n_x \geq 1$.

\medskip

The proof will now directly follow from the  four claims below.

\medskip

\noindent\textbf{\textit{Claim 1:}} $|C| = 5$ is not possible.

\medskip

\noindent\textbf{\textit{Proof of claim 1:}} Assume that $|C| = 5$. Then, by  equation~\ref{choto eqn}, we have
\begin{align}\nonumber
|S_y| \geq 13 - 5 - n_x = 8 - n_x.
\end{align}
Therefore, as $n_y \geq n_x$, we have $n_y \geq 4$. 
Now, every vertex of 
$S_y$ disagrees with $c_3$ on $y$. They  must also disagree with $y$ on $c_2$,  as otherwise all 
of them would be connected to $c_2$ by 2-dipaths with internal vertex $c_1$, which would imply 
$\overline{d}(y_1,y_4) > 2$. 
For similar reasons, the vertices of $S_y$ must disagree with $c_4$ on $y$.

Moreover, the edge $c_0c_1$ does not exist since it would force each vertex of $S_y$ to be
connected to vertices of $S_x$ by 2-dipaths with internal vertex either $c_0$ or $c_1$. 
In fact, for $n_x \geq 2$, as not all vertices of $S_x$ can be adjacent to both $c_0$ and $c_1$, 
every vertex of $S_y$ would be connected to the vertices  of $S_x$ by 2-dipaths with 
internal vertex being exactly one of $c_0, c_1$, thus implying $\overline{d}(y_1,y_{4}) > 2$. 
For $n_x = 1$, as $n_y \geq 7$,  at least four vertices of 
$S_y$ would be connected to the vertices  of $S_x$ by 2-dipaths with 
internal vertex being exactly one of $c_0, c_1$, implying $\overline{d}(y_i,y_{i+3}) > 2$ for 
some $i \in \{1, 2, \dots, n_y\}$.
Hence, the edge $c_0c_1$ does not exist. 

Also, if we have the edge $y_1y_4$ and, without loss of generality,  the edge
$y_1y_3$ by the triangulation of $\overrightarrow{G}$, then every vertex of $S_x$ must be connected to $y_2$ by 2-dipaths 
with internal vertex $y_1$. 
In this case, $\{y_1,y\}$ is a dominating set with at least $n_y$ common neighbors ($c_0$ and $n_y-1$ common neighbors from $S_y$). 
Hence, to avoid a contradiction with the maximality of $D$, we must have $n_y \leq 5$. 
We must also have $n_x \geq 3$. 
But then, as every vertex of $S_x$ agree with each other on $y_1$ 
and on $x$ (as they all disagree with $c_3$ on $x$),
they must all disagree with $c_1$ and $c_4$ on $x$ to have weak directed distance at most 2 with them.
Also the vertices of $S_y$ must disagree with $c_1$ and $c_4$ on $y$ to have directed distance at most 2 with them.
So, $c_1$ and $c_4$ agrees with each other on both $x$ and $y$,
Therefore, to have weak directed distance at most 2 between $c_1$ and 
$c_4$ we must have the 2-dipath connecting $c_4$ and $c_1$ with internal vertex $c_0$.
But this is a contradiction as we can not have the edge $c_0c_1$.

Similarly, we cannot  have the edge $y_1y_3$ also.
Therefore, $y_1$ and $y_4$ must be connected by a 2-dipath with an internal vertex $x_j$ from $S_x$
for some $j\in \{1,2,..,n_x\}$. 
As we cannot have the edge $y_1y_4$, 
this  implies that every vertex of $S \setminus \{x_j\}$ is adjacent to $x_j$ to be at weak directed distance at most 2 from each other. 
We can then reach a contradiction, exactly as in the case described in the paragraph above. 

This proves the claim. \hfill $\lozenge$

\medskip

\noindent\textbf{\textit{Claim 2:}} $|C| = 4$ is not possible.

\medskip

\noindent\textbf{\textit{Proof of claim 2:}} Assume that $|C| = 4$. Then, by  equation~\ref{choto eqn}, we have
\begin{align}\nonumber
|S_y| \geq 13 - 4 - n_x = 9 - n_x.
\end{align}
Therefore, as $n_y \geq n_x$, we have $n_y \geq 5$.

We now show that every vertex of $S_y$ disagrees with $c_2$ and $c_3$ on $y$. 
First note that
no vertex can agree with both $c_2$ and $c_3$ on $y$ as otherwise it would be adjacent to 
both $c_0$ and $c_1$, which is impossible since 
$n_y \geq 5$. 
So,  if the claim is not true, then some vertices of 
$S_y$ will agree with $c_2$ on $y$ and the other vertices of $S_y$ will agree with $c_3$ on $y$.

Also at most three vertices of  $S_y$ can agree with $c_2$ (or $c_3$) on $y$. 
So, $n_y \leq 6$. Hence, $n_x \geq 3$. 

Now, three vertices of $S_y$ agree on $y$ with, say, $c_2$. Then they will all disagree with $c_2$ on 
$c_1$ and every vertex (there are at least three such vertices)
of $S_x$ will disagree with those three vertices on $c_1$. 
Then, to have weak directed distance at most 2 between the vertices of $S_x$, the other 
vertices (there are at least two such vertices) of $S_y$ should be adjacent to 
$c_1$, which is not possible as they are already connected to $c_3$ with 2-dipaths with internal vertex $c_0$.

\medskip

The rest of the proof is similar to the proof of Claim 1.  Using similar arguments,
it is possible to show that the edge $c_0c_1$ does not exist, that the edge $y_1y_4$ does not exist and that it is not possible to have a 2-dipath with internal vertex from $S_x$ connecting 
$y_1$ and $y_4$.   \hfill $\lozenge$

\medskip 

\noindent\textbf{\textit{Claim 3:}} $|C| = 3$ is not possible.

\medskip

\noindent\textbf{\textit{Proof of claim 3:}} Assume that $|C| = 3$. Then, by  equation~\ref{choto eqn}, we have
\begin{align}\nonumber
|S_y| \geq 13 - 3 - n_x = 10 - n_x.
\end{align}
Therefore, as $n_y \geq n_x$, we have $n_y \geq 5$.

First note that it is not possible to have the edge $c_0c_1$, as this will force some three vertices of $S_y$ to be connected to vertices of $S_x$ by 2-dipaths with internal vertex $c_0$ (or $c_1$),
making $\{c_0,y\}$ (or $\{c_1,y\}$) a dominating set that contradicts the maximality of $D$.

For $n_y \geq 7$, there are at least four vertices in $S_y$ that agree with each other on $y$. 
We need to have weak directed distance at most 2 between them. Let those four vertices be $y_i,y_j,y_k,y_l$ with $i >j>k>l$. 

Now, assume that we have the edge $y_iy_l$. Then, every vertex of $S_x$ will be adjacent 
to either $y_i$ or $y_l$. Without loss of generality, assume that every vertex of $S_x$ is adjacent 
to $y_i$. But then, $\{y_i,y\}$ would be a dominating set with at least four common neighbors ,
contradicting the maximality of $D$. Hence $n_y \leq 6$ and, therefore, we must have $n_x \geq 4$.

For $n_y = 5,6$, one can show that these cases are not possible without creating a dominating set that contradicts the maximality of $D$. 
If one just tries to have weak directed distance at most 2 between the vertices of $S$, the proof will follow. The proof of this part is also similar to the ones done before and, though a bit tedious, is not difficult to check.  \hfill $\lozenge$

\medskip

\noindent\textbf{\textit{Claim 4:}} $|C| = 2$ is not possible.

\medskip

\noindent\textbf{\textit{Proof of claim 4:}} Assume that $|C| = 2$. Then, by  equation~\ref{choto eqn}, we have
\begin{align}\nonumber
|S_y| \geq 13 - 2 - n_x = 11 - n_x.
\end{align}
Therefore, as $n_y \geq n_x$, we have $n_y \geq 6$.

This is actually the easiest of the four claims. The case $n_y \geq 7$ can 
be argued as in the previous proof. For $n_y = 6$, we must have $n_x \geq 5$. 
If one just tries to have directed distance at most 2 between the vertices of $S$, the proof will follow. The proof of this part is also similar to the ones done before and, though a bit tedious, is again not difficult to check. \hfill $\lozenge$

\medskip

This completes the proof of the lemma.
  \end{proof}


\medskip

Up to now, we have proved that the value of $|C|$ is at least 6. This is an answer to our question ``how small $|C|$ can be?''. 
We will now consider the question ``How big $|C|$ can be?'' and try to provide upper bounds for the value of $|C|$. The following lemma will help us to do so.

\begin{lemma}\label{helper}
If $|C| \geq 6$, then the following holds:
\begin{enumerate}[{\rm (a)}]
\item $|C^{\alpha \beta}| \leq 3$,  $|C_t^{\alpha}| \leq 6$, $|C| \leq 12$. Moreover, 
if $|C^{\alpha \beta}| = 3$, then $ \overrightarrow{G}[C^{\alpha \beta}]$ is a 2-dipath.
\item $|C_t^{\alpha}| \geq 5$ (respectively $4,3,2,1,0$) implies $|S^{\alpha}_t| \leq 0$ (respectively $1,3,4,5,6$). 
\end{enumerate}
\end{lemma}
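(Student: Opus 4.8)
The plan is to derive everything from the fact that $\overrightarrow{G}$ is a planar oclique together with Lemma~\ref{3same} and the maximality of the dominating set $D=\{x,y\}$ under $\prec$. First I would prove part (a). The bound $|C^{\alpha\beta}|\le 3$ is immediate from Lemma~\ref{3same}: every vertex of $C^{\alpha\beta}=N^\alpha(x)\cap N^\beta(y)$ is an out- or in-neighbor of $x$ in a prescribed direction and similarly for $y$; taking the arc $\overrightarrow{ux}$ or $\overrightarrow{xu}$ (whichever applies, say $u\in C^{+\beta}$ so $\overrightarrow{ux}\in A(\overrightarrow G)$ after relabeling), the set $C^{+\beta}\setminus\{u\}$ lies in $N^-(x)\cap N^\beta(y)$... more cleanly: consider the arc between $x$ and $y$. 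Since $|C|\ge 6\ge 2$, Lemma~\ref{g2}'s proof shows $x$ and $y$ are adjacent (a triangulated oclique with $|C|\ge2$ forces the edge by the same domination argument, or one simply notes non-adjacency was only needed for small $|C|$); assume w.l.o.g. $\overrightarrow{xy}\in A(\overrightarrow G)$. Then for $u\in C^{\alpha\beta}$, $u$ is adjacent to both $x$ and $y$, and $C^{\alpha\beta}\subseteq N^\alpha(x)\cap N^\beta(y)$, so applying Lemma~\ref{3same} to the arc $\overrightarrow{xy}$ gives $|C^{\alpha\beta}|=|N^\alpha(x)\cap N^\beta(y)|\le 3$. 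Summing over the four sign patterns yields $|C|\le 12$, and $|C_t^\alpha|=|C^{\alpha+}|+|C^{\alpha-}|$ (for $t=x$) $\le 6$. For the ``moreover'' clause, if $|C^{\alpha\beta}|=3$, say $C^{\alpha\beta}=\{c_1,c_2,c_3\}$, then these three vertices are pairwise at weak directed distance $\le 2$; a 2-dipath between two of them must use an internal vertex, and using $x$ or $y$ as internal vertex is blocked because all of $c_1,c_2,c_3$ are $\alpha$-neighbors of $x$ and $\beta$-neighbors of $y$ (so e.g. $c_ixc_j$ would require $x$ to have $c_i$ as in-neighbor and $c_j$ as out-neighbor, contradicting $c_i,c_j\in N^\alpha(x)$). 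Hence the three vertices are mutually connected within $C^{\alpha\beta}$; in an oriented graph on three vertices that is pairwise at weak directed distance $\le 2$ and contains no vertex usable as a ``hub'' outside, together with planarity/outerplanarity of the relevant induced subgraph, the only option is a directed 2-path $c_1c_2c_3$ (a directed 3-cycle would give two vertices at weak directed distance... actually a directed triangle has all pairs adjacent, which is also a ``2-dipath'' loosely — so I must rule out the triangle: a triangle on $\{c_1,c_2,c_3\}$ together with $x$ adjacent to all three and $y$ adjacent to all three creates a $K_5$-minor or violates planarity of $\overrightarrow G[D\cup C^{\alpha\beta}\cup\cdots]$; this needs a careful planarity check). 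I expect this ``moreover'' clause to be the fiddly point of part (a), handled by a short planarity argument on $K_{2,3}$/$K_4$ minors around $x,y$.

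For part (b), the strategy is to bound $|S_t^\alpha|$ in terms of $|C_t^\alpha|$ using the structure established in Figure~\ref{undh} and Lemmas~\ref{adjacent}–\ref{3same}. Fix $t=x$ and $\alpha=+$, so $S_x^+\subseteq N^+(x)$ and $C_x^+=N^+(x)\cap C$. Every vertex of $S_x^+$ must be at weak directed distance $\le 2$ from every vertex of $C_x^+$ and from every other vertex of $S_x^+$. By Lemma~\ref{adjacent}(b), each $u\in S_x^+$ forces the boundary points of its region to lie ``near'' the vertices of $C_x^+$ it must reach; and by Lemma~\ref{adjacent}(a) all of $S_x^+$ lies in a small cluster of adjacent regions. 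The key counting input is Lemma~\ref{3same}: since all vertices of $S_x^+$ together with $x$ are such that $S_x^+\subseteq N^+(x)$, if a vertex $c\in C_x^+$ is adjacent to many of them with a fixed sign, Lemma~\ref{3same} applied to the arc between $x$ and $c$ caps that count at $3$. Then I would argue: the vertices of $S_x^+$ reach the $|C_x^+|$ vertices of $C_x^+$ via 2-dipaths whose internal vertices are themselves in $C$ (the boundary points) or in $S$; planarity forces these 2-dipaths to ``nest'', so the more vertices $C_x^+$ has, the fewer ``free slots'' remain for $S_x^+$, and a direct case analysis on $|C_x^+|\in\{0,1,2,3,4,5,6\}$ gives the stated bounds $6,5,4,3,1,0$ respectively (note the jump from $|C_x^+|=2\Rightarrow|S_x^+|\le4$ to $|C_x^+|=3\Rightarrow|S_x^+|\le3$, consistent with $|C_x^+|+|S_x^+|$ being roughly constant — which is exactly what one needs to eventually bound $|\overrightarrow G|$). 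I would organize this as: (1) show $S_x^+$ occupies at most two consecutive regions (Lemma~\ref{adjacent}(a) plus Lemma~\ref{connection}), (2) in each region the vertices of $S_x^+$ are "stacked" between two boundary points of $C$, (3) each boundary point can serve as the internal vertex of at most $3$ such 2-dipaths by Lemma~\ref{3same}, and the shared boundary point between the two regions is even more constrained by Lemma~\ref{connection}, (4) conclude by adding up, using that reaching a large $C_x^+$ consumes the boundary points that would otherwise support vertices of $S_x^+$.

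The main obstacle I anticipate is part (b): turning the qualitative ``nesting/planarity'' picture into the exact arithmetic $|C_t^\alpha|\ge k \Rightarrow |S_t^\alpha|\le f(k)$ with the specific values $(5,4,3,2,1,0)\mapsto(0,1,3,4,5,6)$. The endpoints ($|C_t^\alpha|=0$ or $\ge5$) should be clean, but the middle cases require careful bookkeeping of which boundary points of $C$ are available, how Lemma~\ref{3same}'s cap of $3$ interacts with Lemma~\ref{connection}'s cap of $1$ on a shared boundary point, and ruling out $K_{2,3}$-/$K_4$-minors at each step; I would present these as a short sequence of sub-claims rather than a single monolithic computation. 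Part (a) is mostly a direct consequence of Lemma~\ref{3same}, with only the ``$\overrightarrow G[C^{\alpha\beta}]$ is a 2-dipath'' clause needing the extra planarity observation sketched above.
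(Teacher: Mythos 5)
Your derivation of $|C^{\alpha\beta}|\le 3$ rests on the assumption that $x$ and $y$ are adjacent, so that Lemma~\ref{3same} can be applied to the arc between them; this is a genuine gap. Nothing in the proof of Lemma~\ref{g2} (or in the maximality of $D$ under $\prec$) forces the edge $xy$ when $|C|\ge 6$, and the paper never assumes it --- on the contrary, in several configurations inside the proof of this very lemma the edge $xy$ is shown to be \emph{absent}, because together with $S_{t'}=\emptyset$ it would make $t$ a dominating vertex, contradicting the domination number being $2$. The paper's argument for the first inequality avoids adjacency altogether: the vertices of $C^{\alpha\beta}$ agree with one another on both $x$ and $y$, so any two of them must be joined by an edge or a 2-dipath whose internal vertex is neither $x$ nor $y$; if there were four such vertices, taken in the cyclic order of $C$ around the $x$--$y$ ``lens'', the connections required between the two interleaved pairs would have to cross (each must avoid the 2-paths $xc_jy$ separating them), so some pair would be at weak directed distance more than $2$. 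The same picture gives the ``moreover'' clause (a triangle inside $C^{\alpha\beta}$ and any 2-dipath with internal vertex outside $C^{\alpha\beta}$ are blocked by planarity, since $|C|\ge 6$), which you correctly flag as needing a planarity check but leave unresolved.

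Part (b) is a plan rather than a proof, and the sketched counting does not by itself yield the stated values. The clean cases are indeed $|C_t^\alpha|\ge 5$ (Lemma~\ref{adjacent}(b)) and $|C_t^\alpha|\ge 4$ (Lemma~\ref{connection}), but the middle bounds require ingredients you do not mention: the bound $|S_t^\alpha|\le 6$ for $|C_t^\alpha|\ge 1$ (and $\le 7$ for $|C_t^\alpha|=0$) comes from the fact that the relevant induced subgraph is outerplanar and every oriented outerplanar graph is $7$-colorable, not from Lemma~\ref{3same}; and the sharp values $4$, $5$, $6$ (rather than $5$, $6$, $7$) are obtained by excluding the equality case through a separate argument --- equality forces $S_{t'}=\emptyset$, hence the edge $xy$ cannot exist (else $t$ dominates $\overrightarrow{G}$), hence triangulation forces edges among the boundary points, and then the vertices of $S_t^\alpha$ can no longer pairwise reach each other or $t'$ within weak directed distance $2$. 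Your scheme of ``at most $3$ dipaths per boundary point'' captures the $|C_t^\alpha|=3$ case but, without the outerplanarity bound and the equality-exclusion step, it cannot produce the jump $|C_t^\alpha|\ge 2 \Rightarrow |S_t^\alpha|\le 4$ or the bounds for $|C_t^\alpha|\le 1$, which is exactly the arithmetic Lemma~\ref{le5} later relies on.
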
 

\begin{proof}

\noindent{(a)} If  $|C^{\alpha \beta}| \geq 4$, then there will be two vertices $u,v \in C^{\alpha \beta}$ with $d(u,v)>2$, which is a contradiction. 
Hence we have the first inequality, which implies the other two. 

If 
$|C^{\alpha \beta}| =3$, then the only way to connect the 
two non-adjacent vertices $u,v$ of $C^{\alpha \beta}$ is 
to connect them with a 2-dipath through the other vertex (other than $u,v$) of $C^{\alpha \beta}$.
 
\medskip 
 
\noindent{(b)}  Lemma~\ref{adjacent}(b) implies that if all the elements of $C_t^{\alpha}$ do not belong to  the set of four boundary  points
  of any three consecutive regions (like $R,R^1,R^2$ in Lemma~\ref{connection}), then $|S^{\alpha}_t| = 0$.
  Hence, we have $|C_t^{\alpha}| \geq 5$ implies $|S^{\alpha}_t| \leq 0$.

\medskip   
   
 By Lemma~\ref{connection},   if all the elements of $C_t^{\alpha}$   belong to the set of four boundary  points
  $c^1,c^2,\overline{c^1},\overline{c^2}$ of three consecutive regions $R,R^1,R^2$ (like in Lemma~\ref{connection}) and contains both $\overline{c^1},\overline{c^2}$, then $|S^{\alpha}_t| \leq 1$. Also $S^{\alpha}_t \subseteq R$ by Lemma~\ref{connection}. Hence we have  
  \begin{align}\nonumber
  |C_t^{\alpha}| \geq 4 \text{ implies } |S^{\alpha}_t| \leq 1.
  \end{align}

  Assume now that  all the elements of $C_t^{\alpha}$ belong to the set of three boundary  points
  $c^1,c^2,\overline{c^1}$ of two adjacent regions $R,R^1$ (like in Lemma~\ref{connection}) and contain both $\overline{c^1},c^2$. 
  Then, by Lemma~\ref{adjacent}, $v\in S^\alpha_t$ implies $v$ is in $R$ or $R^1$. 
  
  Now, if both $S^\alpha_t \cap R$ and 
  $S^\alpha_t \cap R^1$ are non-empty, then each vertex of  $(S^\alpha_t \cap R) \cup \{ c^2 \}$ disagrees with each vertex of $(S^\alpha_t \cap R^1) \cup \{ \overline{c^1} \}$
 on $c^1$ (by Lemma~\ref{connection}). 
 
 Hence, by Lemma~\ref{3same}, we have
 \begin{align}\nonumber
 |(S^\alpha_t \cap R) \cup \{ \overline{c^1} \}| ,|(S^\alpha_t \cap R^1) \cup \{ c^2 \}| \leq 3.
 \end{align}
 This clearly implies
 \begin{align}\nonumber
 |S^\alpha_t \cap R|,|S^\alpha_t \cap R^1| \leq 2 \text{ and } |S^\alpha_t| \leq 4.
 \end{align}

 Suppose now that we have $|S^\alpha_t| = 4$ and, hence, also 
 $|S^\alpha_t \cap R|,|S^\alpha_t \cap R^1| = 2$. 
 Then, $S_{t'} = \emptyset $  as the only way for a vertex of $S_{t'}$ to be at 
weak directed  distance at most 2 from every vertex of $S_t$ is by being connected by a 2-dipath with internal vertex $c_1$, which is impossible as the vertices of 
$S^\alpha_t \cap R$ disagree  with the vertices of $S^\alpha_t \cap R^1$ on $c_1$. 

In fact, for the same reason, it is impossible to have weak directed distance at most 2 between all the vertices of $S_t$ and  $t'$ unless we have the edge $tt'$ (that is the edge $xy$). But then, the edge 
$tt'$ makes $t$ a vertex that dominates the whole graph, contradicting the 
domination number of the graph being 2. Therefore, it is not possible to have $|S^\alpha_t| = 4$. 
Hence, we have $|S^\alpha_t| \leq 3$ in this case.

Also, if one of $S^\alpha_t \cap R$ and 
  $S^\alpha_t \cap R^1$ is empty then we must have  $|S^\alpha_t| \leq 3$ by Lemmas~\ref{connection} and \ref{3same}. 
  Hence, we have 
  \begin{align}\nonumber
  |C_t^{\alpha}| \geq 3 \text{ implies } |S^{\alpha}_t| \leq 3.
  \end{align}

 Let $R,R^1,R^2,c^1,c^2,\overline{c^1},\overline{c^2}$ be as in Lemma~\ref{connection} and assume $C_t^{\alpha} = \{c^1,c^2 \}$.  
 By Lemma~\ref{adjacent}, $v\in S^\alpha_t$ implies $v$ is in $R$, $R^1$ or $R^2$, and also that
 both $S^\alpha_t \cap R^1$ and $S^\alpha_t \cap R^2$ cannot be non-empty. 
 Hence, without loss of generality, assume  $S^\alpha_t \cap R^2 = \emptyset$.

 By Lemma~\ref{connection}, the vertices of  
 $S^\alpha_t \cap R^1$  disagree with the vertices of $(S^\alpha_t \cap R) \cup \{c^2 \}$ on $c^1$. 
  Hence, by Lemma~\ref{3same}, we have
  \begin{align}\nonumber
  |S^\alpha_t \cap R^1|,|(S^\alpha_t \cap R) \cup \{c^2 \}| \leq 3.
\end{align}
This implies   $|S^\alpha_t| \leq 5$.

  Now, if $S^\alpha_t \cap R^1 = \emptyset$ then $S_t^\alpha = S^\alpha_t \cap R$. 
  Let $|S^\alpha_t \cap R| \geq 6$ and consider the induced graph $ \overrightarrow{O} = \overrightarrow{G}[(S \cap R) \cup \{c^1,c^2 \}]$. 
  In this graph, the vertices of $(S^\alpha_t \cap R) \cup \{c^1,c^2 \}$ are  at weak directed distance at most 2 from each other. 
  Hence, $\chi_o( \overrightarrow{O}) \geq 8$. But this is a contradiction since $ \overrightarrow{O}$ is an outerplanar graph and every outerplanar graph has an oriented $7$-coloring~\cite{ocoloring}. 
  Hence,
  \begin{align}\nonumber
  |C_t^{\alpha}| \geq 2 \text{ implies } |S^{\alpha}_t| \leq 5.
  \end{align}
  Suppose now that we have $|S^{\alpha}_t| = 5$. Then we must have 
  $S_{t'} = \emptyset$ as otherwise it is not possible to have weak directed distance at most 2 between the vertices of $S$. 
  
  We also do not have the edge $xy$ as it would contradict the 
  domination number of the graph being 2 ($t$ will dominate the graph).
  So, by the triangulation of $\overrightarrow{G}$, we  have the edges $c^1c^2$ and  $c^{\overline{1}}c^1$. 
   Hence, each vertex of $S_t$ must 
  be connected to $t'$ with a 2-dipath with internal vertices 
  from $\{c^{\overline{1}}, c^1, c^2 \}$. But then, it will not be 
  possible to have weak directed distance at most 2 between the five vertices of $S_t$. 
  
  Hence, 
  \begin{align}\nonumber
  |C_t^{\alpha}| \geq 2 \text{ implies } |S^{\alpha}_t| \leq 4.
  \end{align}

 In general, $S^\alpha_t$ is contained in two distinct adjacent regions by Lemma~\ref{adjacent}. 
 Without loss of generality, assume $S^\alpha_t \subseteq R_1 \cup R_2$. 
 If both $S^\alpha_t \cap R_1$ and $S^\alpha_t \cap R_2$ are non-empty then, by Lemma~\ref{connection}, we know that 
 the vertices of $S^\alpha_t \cap R_1$  disagree with the vertices of $S^\alpha_t \cap R_2$ on $c_1$. 
 Hence, $|S^\alpha_t \cap R_1|,|S^\alpha_t \cap R_2| \leq 3$, which implies 
 $|S^\alpha_t| \leq 6$.

 Assume now that only one of the two sets $S^\alpha_t \cap R_1$ and  $S^\alpha_t \cap R_2$ is non-empty. 
 Without loss of generality, assume $S^\alpha_t \cap R_1 \neq \emptyset$. 
 If  $c_0,c_1 \notin C^\alpha_t $ and $|C^\alpha_t| = 1$ then we have  
  $|S^\alpha_t \cap R_1| \leq 3$ by Lemmas~\ref{connection} and~\ref{3same}. 
  In  the  induced outerplanar graph 
  $ \overrightarrow{O} = \overrightarrow{G}[(S \cap R_1) \cup \{c_1,c_2 \}]$, the   vertices of $S^\alpha_t  \cup (c^\alpha_t \cap \{c_1,c_2 \})$ are  at weak directed distance at most 2
  from each other.

  Hence, $7 \geq \chi_o( \overrightarrow{O}) \geq |S^\alpha_t  \cup (c^\alpha_t \cap \{c_1,c_2 \})|$.
  Therefore,
  \begin{align}\nonumber
  |C_t^{\alpha}| \geq 1 \text{ (respectively  $0$)   implies } |S^{\alpha}_t| \leq 6 \text{ (respectively $7$).}
  \end{align}
Now, when both the equalities hold,  we must have $S_{t'} = \emptyset $ as otherwise 
$C \cup S_t \cup S_{t'}$ would contain an oriented outerplanar graph with oriented chromatic number at least 8, which is not possible, in order to have all the vertices 
of $S$ at weak directed distance at most 2 from each other. 

Now, $S_{t'} = \emptyset $ would imply that the edge $xy$ is not there, as otherwise $t$ would dominate the whole graph. 
Hence, each vertex of $S^\alpha_t$ must be connected to $t'$ by a 2-dipath with internal vertex $c_i$ for some $i \in \{0,1,2 \}$. 
But this would force  $|S^\alpha_t \cup C^\alpha_t| \leq 6$ as 
otherwise the vertices of 
$S^\alpha_t \cup C^\alpha_t$ would no longer be at weak directed distance at most 2 from each other.

Hence,
  \begin{align}\nonumber
  |C_t^{\alpha}| \geq 1 \text{ (respectively  $0$)   implies } |S^{\alpha}_t| \leq 5 \text{ (respectively $6$),}
  \end{align}
   and we are done.
    \end{proof}

We now prove that the value of $|C|$ can be at most 5, which contradicts our previously proven lower bound on $|C|$. That actually proves Lemma~\ref{lem planar dom 2 implies order 14}. 

\begin{lemma}\label{le5}
$| C | \leq 5$.
\end{lemma}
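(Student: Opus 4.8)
The plan is to argue by contradiction: assume $|C|\geq 6$ and deduce $|\overrightarrow{G}|\leq 14$, which contradicts the standing hypothesis $|\overrightarrow{G}|>14$. (Together with Lemma~\ref{ge6} this shows that no planar oclique of order at least $15$ can have domination number $2$, i.e.\ it proves Lemma~\ref{lem planar dom 2 implies order 14}, hence Theorem~\ref{unique}.) Throughout I would use the decomposition $|\overrightarrow{G}| = |D| + |C| + |S_x| + |S_y|$ (the four sets are pairwise disjoint, since $D$ dominates $\overrightarrow{G}$ and $S_x\cap S_y\subseteq N(x)\cap N(y)=C$), together with Lemma~\ref{helper}: $|C|\leq 12$, $|C^\alpha_t|\leq 6$, and for every pair $(t,\alpha)$ the bound $|S^\alpha_t|\leq f(|C^\alpha_t|)$ where $f(0)=6$, $f(1)=5$, $f(2)=4$, $f(3)=3$, $f(4)=1$ and $f(k)=0$ for $k\geq 5$. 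Since $|C^+_t|+|C^-_t|=|C|$, this already gives $|S_t|\leq f(|C^+_t|)+f(|C^-_t|)$.

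First I would dispose of the large values $|C|\in\{10,11,12\}$. Here $\min\{|C^+_t|,|C^-_t|\}\geq |C|-6\geq 4$, so for each $t$ one of $|C^+_t|,|C^-_t|$ is $\geq 5$ (contributing $0$) and the other is $\geq 4$ (contributing $\leq 1$); hence $|S_t|\leq 1$ when $|C|=10$ and $|S_t|=0$ when $|C|\geq 11$, so $|\overrightarrow{G}|\leq 2+12+0=14$, $2+11+0=13$, $2+10+2=14$ respectively. From now on $6\leq |C|\leq 9$, where the crude count only yields $|S|\leq 12,10,8,6$ (for $|C|=6,7,8,9$) while we need $|S|\leq 12-|C|\in\{6,5,4,3\}$, a gap of $6,5,4,3$.

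For $6\leq |C|\leq 9$ I would split into two regimes. If one of $S_x,S_y$ is empty, say $S_y=\emptyset$, then $x$ and $y$ are non-adjacent (else $x$ dominates $\overrightarrow{G}$) and the triangulation of $\overrightarrow{G}$ forces a fan-like configuration around $x$ (as in the $n_x=0$ analysis in the proof of Lemma~\ref{ge6}); connecting each vertex of $S_x$ to $y$ by a $2$-dipath through some $c_i$ then makes $\{c_i,y\}$ a dominating set with more than $|C|$ common neighbours, contradicting the maximality of $D$, unless $|S_x|$ is small enough that $|\overrightarrow{G}|\leq 14$. If both $S_x$ and $S_y$ are non-empty, I would use Lemmas~\ref{adjacent} and~\ref{connection} to confine $S$ to at most three consecutive regions $R_{i-1},R_i,R_{i+1}$: each $S^\alpha_t$ already lies in two adjacent regions, $S_x$- and $S_y$-vertices lie in adjacent regions, and two vertices of $S$ whose regions are at region-distance $\geq 2$ can neither be adjacent nor share a common neighbour in $\overrightarrow{G}$, because a common neighbour of a vertex interior to a region $R_j$ is a boundary vertex of $R_j$ and no $2$-dipath among vertices of $S$ passes through $x$ or $y$. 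The same analysis shows that $S$, together with the at most four boundary vertices of those three regions, induces an \emph{outerplanar} subgraph of $\overrightarrow{G}$ in which $S$ is a relative oriented clique, so $|S|\leq 7$ by Theorem~\ref{theoremouterrelative}. I would then push this down to $|S|\leq 12-|C|$ in each of the cases $|C|=6,7,8,9$ by combining three ingredients: the $+/-$ disagreement forced between two vertices of $S$ in adjacent regions (Lemmas~\ref{connection} and~\ref{3same}); the refined equality cases of Lemma~\ref{helper}(b), which force $S_{t'}=\emptyset$ — reducing to the first regime — as soon as some $|S^\alpha_t|$ meets its bound; and repeated use of the maximality of $D$, which forbids any $\{c_i,t\}$ or $\{c_i,c_j\}$ from being a dominating set with more than $|C|$ common neighbours.

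The main obstacle is the case $|C|=6$: the crude count leaves a gap of $6$, the outerplanar bound $|S|\leq 7$ is still one short of the required $|S|\leq 6$, and eliminating the borderline configuration $|S|=7$ needs the outerplanar relative-oriented-clique bound together with a careful region-by-region bookkeeping of where each $S^\alpha_t$ sits and several maximality-of-$D$ arguments; the cases $|C|=7,8,9$ use the same tools with progressively more slack. Once $|C|\leq 5$ is established it contradicts Lemma~\ref{ge6}, so no such $\overrightarrow{G}$ exists and Lemma~\ref{lem planar dom 2 implies order 14}, hence Theorem~\ref{unique}, follows.
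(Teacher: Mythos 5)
Your overall skeleton matches the paper's: assume $|C|\geq 6$, use Lemma~\ref{helper} to bound $|S^\alpha_t|$ in terms of $|C^\alpha_t|$, and derive $|S|\leq 12-|C|$, contradicting $|\overrightarrow{G}|\geq 15$; your dispatch of $|C|\in\{10,11,12\}$ is correct, and in your first regime (one of $S_x,S_y$ empty) the bound you need actually follows from Lemma~\ref{helper}(b) alone (for $6\leq|C|\leq 9$ the maximum of $f(|C^+_t|)+f(|C^-_t|)$ over admissible splits is exactly $12-|C|$), so the vague dominating-set argument you sketch there is unnecessary. The genuine gap is in the second regime, which is the heart of the lemma. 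First, your intermediate bound $|S|\leq 7$ is unjustified: even granting that $S$ lies in three consecutive regions and that $S$ together with their boundary vertices induces an outerplanar graph, $S$ need \emph{not} be a relative oriented clique in that induced subgraph. Your justification that ``no $2$-dipath among vertices of $S$ passes through $x$ or $y$'' is false for pairs $u\in S^+_t$, $v\in S^-_t$: such a pair may be non-adjacent, lie in far-apart regions (Lemma~\ref{adjacent} imposes no constraint on $S^+_t\times S^-_t$), and have $t$ as their only common neighbour, so their only $2$-dipath goes through $t$, which you have deleted. Hence Theorem~\ref{theoremouterrelative} does not apply to $S$ in that subgraph.

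Second, even if an intermediate bound were available, the reduction to $|S|\leq 12-|C|$ for $|C|=6,7,8,9$ is exactly where all the work lies, and you only name the tools (``disagreement on $c_i$'s, equality cases of Lemma~\ref{helper}(b), maximality of $D$'') without executing any case. The paper organizes this part differently and more sharply: it orders $|C^\alpha_x|\geq|C^\beta_y|\geq|C^{\overline\beta}_y|\geq|C^{\overline\alpha}_x|$ and runs a case analysis over the triples $(|C|,|C^\alpha_x|,|C^\beta_y|)$, killing many triples outright because $|C^{\alpha\beta}|\leq 3$ (and $|C^{\alpha\beta}|=3$ forces $\overrightarrow{G}[C^{\alpha\beta}]$ to be a $2$-dipath), and then, for the surviving triples (its Cases $1$ and $2$ are worked in detail), pinning the sets $S^\alpha_t$ into specific regions via Lemmas~\ref{adjacent}, \ref{connection} and \ref{3same} and counting. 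Admittedly the paper itself closes with ``similarly one can handle the remaining cases,'' but it does exhibit the mechanism on representative hard triples; your proposal, by contrast, rests the whole range $6\leq|C|\leq 9$ on an incorrect relative-clique claim plus an unexecuted plan, so as written it does not prove the lemma.
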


\begin{proof}
Without loss of generality,  we can suppose $|C_x^\alpha| \geq |C_y^\beta| \geq |C_y^{\overline{\beta}}| \geq |C_x^{\overline{\alpha}}|$ (the last inequality is forced). 
We know that $|C| \leq 12$ and that $|C_x^{\alpha}| \leq 6$ (Lemma~\ref{helper}(a)). 
Therefore, it is enough to show  
that $|S| \leq 12-|C|$ for all possible values of $(|C|,|C^\alpha_x|,|C^\beta_y|)$,  since it contradicts~(\ref{order}). 

\medskip

For $(|C|,|C^\alpha_x|,|C^\beta_y|) = (12,6,6)$, $(11,6,6)$, $(10,6,6)$, $(10,6,5)$, $(10,5,5)$, $(9,5,5)$,  $(8,4,4)$ we have $|S| \leq 12-|C|$, using Lemma~\ref{helper}(b). 

For $(|C|,|C^\alpha_x|,|C^\beta_y|) = (8,6,6)$, $(7,6,6)$, $(7,6,5)$,$(6,6,6)$,  $(6,6,5)$,  $(6,6,4)$,  $(6,5,5)$ we are forced 
to have
\begin{align}\nonumber
|C^{\alpha \beta}| > 3.
\end{align}
This is a contradiction  by Lemma~\ref{helper}(a).

So, $(|C|,|C^\alpha_x|,|C^\beta_y|) \neq (12,6,6)$, $(11,6,6)$, $(10,6,6)$, $(10,6,5)$, $(10,5,5)$, $(9,5,5)$,  $(8,4,4)$, $(8,6,6)$, $(7,6,6)$, $(7,6,5)$,$(6,6,6)$,  $(6,6,5)$,  $(6,6,4)$,  $(6,5,5)$.

We will be done if we prove that $(|C|,|C^\alpha_x|,|C^\beta_y|)$ cannot 
take the other possible values also. That leaves us checking a lot of cases. We will check 
just a few cases and observe that the other cases can be checked using similar arguments.

\medskip

\noindent\textbf{\textit{Case 1:}} Assume $(|C|,|C^\alpha_x|,|C^\beta_y|) = (9,6,6)$.

We are then forced to have
$|C^{\alpha\beta}| = |C^{\alpha \overline{\beta}}| =
|C^{\overline{\alpha} \beta}| = 3$ in order to satisfy the first inequality of Lemma~\ref{helper}(a). 
So, $\overrightarrow{G}[C^{\alpha\beta}]$,
$ \overrightarrow{G}[C^{\alpha \overline{\beta}}]$ and $ \overrightarrow{G}[C^{\overline{\alpha} \beta}]$ are  2-dipaths by Lemma~\ref{helper}(a). 
Without loss of generality, we can assume $C^{\alpha \overline{\beta}}=\{c_0,c_1,c_2 \}$ and $C^{\overline{\alpha} \beta} = \{c_3,c_4,c_5 \}$. 
Hence, by  Lemma~\ref{adjacent}, we have
  $u \in R_1 \cup R_2$ and $v \in R_4 \cup R_5$ for any $(u,v)\in S^{\overline{\beta}}_y \times S^{\overline{\alpha}}_x$.  
  Now, by Lemma~\ref{adjacent}, either $S^{\overline{\beta}}_y$ or $S^{\overline{\alpha}}_x$ is empty. 
  Without loss of generality, assume $S^{\overline{\beta}}_y = \emptyset$. Therefore, we have
 $|S| = |S_x| = |S^{\overline{ \alpha}} _x| \leq 3$ 
 (by Lemma~\ref{helper}(b)). So this case is not possible.
 
\medskip 
 
\noindent\textbf{\textit{ Case 2:}} Assume $(|C|,|C^\alpha_x|,|C^\beta_y|) = (7,6,4)$.

So, without loss of generality, we can assume that 
$ \overrightarrow{G}[C^{\alpha \beta}]$ and $ \overrightarrow{G}[C^{\alpha \overline{\beta}}]$  are  2-dipaths, and  $C^{\alpha \beta}=\{c_0,c_1,c_2 \}$, $C^{\alpha \overline{\beta}} = \{c_3,c_4,c_5 \}$ and $C^{\overline{\alpha} \beta} = \{c_6 \}$. 

By Lemma~\ref{helper}, we have $|S_x| \leq 5$ and $|S_y| \leq 3+1 =4$. 
So we are done if either $S_x = \emptyset$ or $S_y = \emptyset$. 

So assume both $S_x$ and $S_y$ are non-empty. 
First assume that  
$S^\beta_y \neq \emptyset$. Then, by Lemma~\ref{adjacent}, we have $S^\beta_y \subseteq R_5$, 
$S^{\overline{\alpha}}_x \subseteq R_5 \cup R_6$ and hence $S^{\overline{\beta}}_y = \emptyset$. 
By Lemma~\ref{connection}, the vertices of $S^\beta_y$ and the vertices of $S^{\overline{\alpha}}_x \cap R_5$ must disagree with $c_6$ on $c_5$ while disagreeing 
with each other on $c_5$, which is not possible. 
Hence, $S^{\overline{\alpha}}_x \cap R_5 = \emptyset$. 
Also, $|S^{\overline{\alpha}}_x \cap R_6| \leq 3$ as they all disagree on $c_5$ with the vertices of $S^\beta_y$. 
Hence, $|S| \leq 4$ when $S^\beta_y \neq \emptyset$.

Now assume $S^\beta_y = \emptyset$ hence $S^{\overline{\beta}}_y \neq \emptyset$. Then, by Lemma~\ref{adjacent}, we have 
$S^{\overline{\beta}}_y \subseteq R_1 \cup R_2$, 
$S^{\overline{\alpha}}_x \subseteq R_0 \cup R_1$ and hence $S^{\beta}_y = \emptyset$. 
Assume $S^{\overline{\beta}}_y \cap R_2 = \emptyset$, as otherwise the vertices of $S^{\overline{\alpha}}_x$ would be adjacent to both $c_0$ and $c_1$ (to be connected to $c_6$ and to
vertices of $S^{\overline{\beta}}_y \cap R_2$ by a 2-dipath), implying $|S^{\overline{\alpha}}_x| \leq 1$, implying $|S| \leq 5$. 
If $S^{\overline{\alpha}}_x \cap R_0 \neq \emptyset$ then 
$|S^{\overline{\beta}}_y \cap R_1|=1$, $|S^{\overline{\alpha}}_y \cap R_1| \leq 1$ 
and $|S^{\overline{\alpha}}_y \cap R_0| \leq 3$, by Lemma~\ref{connection}, and hence $|S| \leq 5$. 
If $S^{\overline{\alpha}}_x \cap R_0 = \emptyset$ then  $|S^{\overline{\beta}}_y \cap R_1| \leq 2$, $|S^{\overline{\alpha}}_y \cap R_1| \leq 3$ and hence $|S| \leq 5$. So also this case is not possible.
 
\medskip 
 
 Similarly one can handle the remaining cases. 
  \end{proof}

From the above lemmas, we get that every planar oclique of order at least 
15 is dominated by a single vertex. Moreover, we also  proved that
a planar oclique dominated by one vertex can have order at most 15. 
Hence, there is no planar oclique of order more than 15. We also proved that every oclique of order 15 must contain 
the planar oclique depicted in Figure~\ref{fig planarmax} as a \ESOK{spanning} subgraph. 

This concludes the proof of Theorem~\ref{unique}. \hfill $\square$

 \section{Proof of Theorem~\ref{orientedplanarabsolute}}\label{sec orientedplanarabsolute}

\noindent{(a)} The proof  directly follows from Theorem~\ref{unique}. 

\medskip

\noindent{(b)} In 1975, Plesn\'ik~\cite{plesnik} characterized and listed all triangle-free planar graphs with diameter 2 (see Theorem~\ref{theorem plesnik}). They are precisely the graphs depicted 
in Figure~\ref{Plesnik}.
Now note that every orientation of those graphs admits a homomorphism to the graphs depicted in Figure~\ref{oPlesnik}, respectively (that is, any oriented graph with underlying graph from the first, second and third family of graphs described in Figure~\ref{Plesnik} admits \ESOK{a homomorphism} to the first, second and third oriented graph depicted in Figure~\ref{oPlesnik}, respectively).

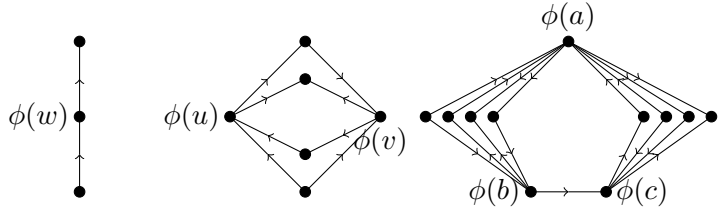
\begin{figure}

\centering
\begin{tikzpicture}

\filldraw [black] (0,0) circle (2pt) {node[left]{}};
\filldraw [black] (0,1) circle (2pt) {node[left]{$\phi(w)$}};
\filldraw [black] (0,2) circle (2pt) {node[right]{}};

\draw[->] (0,0) -- (0,.5);
\draw[-] (0,.5) -- (0,1);

\draw[->] (0,1) -- (0,1.5);
\draw[-] (0,1.5) -- (0,2);

\filldraw [black] (3,0) circle (2pt) {node[left]{}};
\filldraw [black] (3,.5) circle (2pt) {node[right]{}};
\filldraw [black] (3,1.5) circle (2pt) {node[right]{}};
\filldraw [black] (3,2) circle (2pt) {node[left]{}};

\filldraw [black] (2,1) circle (2pt) {node[left]{$\phi(u)$}};
\filldraw [black] (4,1) circle (2pt) {node[below]{$\phi(v)$}};

\draw[->] (2,1) -- (2.5,1.5);
\draw[-] (2.5,1.5) -- (3,2);

\draw[->] (2,1) -- (2.5,1.25);
\draw[-] (2.5,1.25) -- (3,1.5);

\draw[->] (3,.5) -- (2.5,.75);
\draw[-] (2.5,.75) -- (2,1);

\draw[->] (3,0) -- (2.5,.5);
\draw[-] (2.5,.5) -- (2,1);

\draw[->] (3,2) -- (3.5,1.5);
\draw[-] (3.5,1.5) -- (4,1);

\draw[->] (4,1) -- (3.5,1.25);
\draw[-] (3.5,1.25) -- (3,1.5);

\draw[->] (4,1) -- (3.5,.75);
\draw[-] (3.5,.75) -- (3,.5);

\draw[->] (3,0) -- (3.5,.5);
\draw[-] (3.5,.5) -- (4,1);

 \filldraw [black] (6,0) circle (2pt) {node[left]{$\phi(b)$}};
\filldraw [black] (7,0) circle (2pt) {node[right]{$\phi(c)$}};

\filldraw [black] (5.5,1) circle (2pt) {node[left]{}};
\filldraw [black] (7.5,1) circle (2pt) {node[right]{}};

\filldraw [black] (5.2,1) circle (2pt) {node[left]{}};
\filldraw [black] (7.8,1) circle (2pt) {node[right]{}};

\filldraw [black] (4.9,1) circle (2pt) {node[left]{}};
\filldraw [black] (8.1,1) circle (2pt) {node[right]{}};

\filldraw [black] (4.6,1) circle (2pt) {node[left]{}};
\filldraw [black] (8.4,1) circle (2pt) {node[right]{}};

\filldraw [black] (6.5,2) circle (2pt) {node[above]{$\phi(a)$}};

\draw[->] (7.8,1) -- (7.15,1.5);
\draw[-] (7.15,1.5) -- (6.5,2);

\draw[->] (4.9,1) -- (5.7,1.5);
\draw[-] (5.7,1.5) -- (6.5,2);

\draw[->] (4.6,1) -- (5.55,1.5);
\draw[-] (5.55,1.5) -- (6.5,2);

\draw[->] (6.5,2) -- (5.85,1.5);
\draw[-] (5.85,1.5) -- (5.2,1);

\draw[->] (6.5,2) -- (7.3,1.5);
\draw[-] (7.3,1.5) -- (8.1,1);

\draw[->] (6.5,2) -- (7.45,1.5);
\draw[-] (7.45,1.5) -- (8.4,1);

\draw[->] (7.8,1) -- (7.4,.5);
\draw[-] (7.4,.5) -- (7,0);

\draw[->] (8.1,1) -- (7.55,.5);
\draw[-] (7.55,.5) -- (7,0);

\draw[->] (7,0) -- (7.7,.5);
\draw[-] (7.7,.5) -- (8.4,1);

\draw[->] (6,0) -- (5.6,.5);
\draw[-] (5.6,.5) -- (5.2,1);

\draw[->] (6,0) -- (5.45,.5);
\draw[-] (5.45,.5) -- (4.9,1);

\draw[->] (4.6,1) -- (5.3,.5);
\draw[-] (5.3,.5) -- (6,0);


\draw[->] (6,0) -- (6.5,0);
\draw[-] (6.5,0) -- (7,0);

\draw[->] (7,0) -- (7.25,.5);
\draw[-] (7.25,.5) -- (7.5,1);

\draw[->] (7.5,1) -- (7,1.5);
\draw[-] (7,1.5) -- (6.5,2);

\draw[->] (6.5,2) -- (6,1.5);
\draw[-] (6,1.5) -- (5.5,1);

\draw[->] (5.5,1) -- (5.75,.5);
\draw[-] (5.75,.5) -- (6,0);

\end{tikzpicture}

\caption{Planar \ESOK{targets} with girth at least 4}~\label{oPlesnik}

\end{figure}	

To prove the homomorphisms,  we map the vertices $w,u,v$ and $a$ from Figure~\ref{Plesnik} to the corresponding vertices 
$\phi(w),\phi(u),\phi(v)$ and $\phi(a)$ in Figure~\ref{oPlesnik}, respectively. The vertices $b$ and $c$ are
 mapped to 
the vertices $\phi(b)$ (or $\phi(c)$) and $\phi(c)$ (or $\phi(b)$) depending on the orientation of the edge $bc$. 
Without loss of generality, we can assume the edge to be oriented as $\overrightarrow{bc}$ and assume that the vertices $b$ and $c$ map to the vertices 
$\phi(b)$ and $\phi(c)$, respectively. 

Now, to complete the first homomorphism, map the vertices of $N^\alpha(w)$ to the unique vertex in $N^\alpha(\phi(w))$ for 
$\alpha \in \{+,-\}$.

To complete the second homomorphism,  map the vertices of $N^\alpha(u) \cap N^\beta(u)$ to the unique vertex in $N^\alpha(\phi(u)) \cap N^\beta(\phi(v))$ for 
$\alpha, \beta \in \{+,-\}$.

To complete the third homomorphism,  map the vertices of $N^\alpha(a) \cap N^\beta(t)$ to the unique vertex in $N^\alpha(\phi(a)) \cap N^\beta(\phi(t))$ for 
$\alpha, \beta \in \{+,-\}$ and $t \in \{b,c\}$.

Now, note that the first two oriented graphs depicted in Figure~\ref{oPlesnik} are ocliques of order 3 and 6, respectively, while the third graph is not an 
oclique but clearly has \ESOK{absolute oriented clique number} 5. 

Hence, there is no triangle-free planar oclique of order more than 6. 
Also, the only example of a trianlge-free oclique of order 6 is the second graph depicted in Figure~\ref{oPlesnik}.

\medskip

\noindent{(c)} From the proof above, we know that there is no triangle-free planar oclique of order more than 6, and the only example of a triangle-free oclique of order 6 is the second graph depicted in Figure~\ref{oPlesnik}, which is a graph with girth 4. 
Hence, there is no planar oclique with girth at least 5 on more than 5 vertices, while the directed cycle of length 5 is clearly a planar oclique with girth 5.

\medskip

\noindent{(d)} The 2-dipath is an oclique of order 3. From Plesn\'ik's characterization, the rest of the proof follows easily.  \hfill $\square$

\section{Conclusion}\label{sec conclusion}

In this paper we proved three main results regarding the order of planar ocliques, that is oriented planar graphs with 
\textit{weak directed diameter} (that is, the maximum weak directed distance between two vertices of an oriented graph) at most two.
We provided an exhaustive list of spanning subgraphs of outerplanar graphs that admits an orientation with weak directed diameter  at most two.
Now the question is, can a similar result be proved for planar graphs also?

\begin{question}
Characterize the set $L$ of graphs such that a planar graph can be oriented as an oclique if and only if it contains one of the graphs from $L$ as a spanning subgraph.
\end{question}

We partially answer the question by proving that every planar oclique of order 15 must contain a particular oclique as a spanning subgraph. 
As the order of a planar oclique  can at most be 15, a similar study for planar ocliques of order less than 15 will answer the question. 
We also proved tight upper bounds for the order of planar ocliques of girth at least $k$ for all $k \geq 4$.

We defined the parameter oriented relative oclique number and used it for proving Theorem~\ref{unique}. 
Determining oriented relative clique number for different families of graphs, such as the family of planar graphs, seems to be an interesting 
direction of research.

\bibliographystyle{abbrv}
\bibliography{NSS14}

\end{document}